\documentclass[a4paper]{article}
\usepackage[dvipsnames]{xcolor}
\usepackage[inline]{enumitem}
\usepackage{mathtools}

\usepackage[utf8]{inputenc}
\usepackage[T1]{fontenc}
\usepackage{amsmath,amsthm,amssymb,amsfonts,subcaption}
\usepackage{xspace}
\usepackage{hyperref}

\usepackage{verbatim}
\usepackage{tikz}
\usepackage{setspace}
\usepackage{cite}
\usetikzlibrary{snakes}
\usepackage[procnumbered,linesnumbered,ruled,vlined]{algorithm2e}
\usetikzlibrary{decorations.pathmorphing}

\captionsetup[subfigure]{labelformat=simple}

\hypersetup{
	colorlinks=true,breaklinks,
	linktoc=section,
	linkcolor=Maroon,
	linkbordercolor=white,
	citecolor=dartmouthgreen,
	urlcolor=dartmouthgreen,
	pdfborder = {0 0 1}
}

\usetikzlibrary{positioning, fadings, backgrounds}

\usepackage[textsize=footnotesize,color=green!40]{todonotes}
\usepackage[hmargin=2.5cm,vmargin=3cm]{geometry}

\usepackage[nameinlink]{cleveref}
\Crefname{algocf}{algorithm}{Procedure}

\usetikzlibrary{decorations.pathmorphing}
\tikzset{snake it/.style={decorate, decoration=snake}}

\newenvironment{subproof}[1][\proofname]{%
	\begin{proof}[#1]%
	}{%
	\end{proof}%
}

\definecolor{dartmouthgreen}{rgb}{0.05, 0.5, 0.06}

\newtheorem{theorem}{Theorem}

\newtheorem{proposition}{Proposition}
\newtheorem{lemma}[theorem]{Lemma}
\newtheorem{observation}{Observation}
\newtheorem{corollary}{Corollary}

\newtheorem{definition}{Definition}

\newtheorem{conjecture}[theorem]{Conjecture}
\newtheorem{claim}{Claim}[theorem]

\setstretch{1.1}
%




\title{ $K_{2,3}$-induced minor-free graphs admit quasi-isometry with additive distortion to graphs of tree-width at most two}
\date{}	

\author{Dibyayan Chakraborty\footnote{School of Computer Science, University of Leeds, United Kingdom. \texttt{Email: D.chakraborty@leeds.ac.uk}}}

\newcommand{\Ball}[2]{B_{#1}\left({#2}\right)}
\newcommand{\Sphere}[2]{N_{#1}(#2)}
\newcommand{\clusterGraphv}[2]{ \Gamma \left({#1},{#2}\right) }

\newcommand{\distance}[3]{d_{#1}\left(#2,#3\right) }

\newcommand{\ParentSet}[2]{ P\left(#1, #2\right)}

\newcommand{\InducedSub}[2]{#1\left[#2\right]}

\newcommand{\dist}[2]{d\left(#1,#2\right)}

\newcommand{\dmod}[1]{\left|#1\right|}

\newcommand{\sipco}[1]{sipco\left(#1\right)}

\newcommand{\ipco}[1]{ipco\left(#1\right)}

\begin{document}
	
	\maketitle
	
	\begin{abstract}
		A graph $H$ is an \emph{induced minor} of a graph $G$ if $H$ can be obtained from $G$ by a sequence of edge contractions and vertex deletions. Otherwise, $G$ is \emph{$H$-induced minor-free}. In this paper, we provide a different proof of the fact that $K_{2,3}$-induced minor-free graphs admit a quasi-isometry with additive distortion to graphs with tree-width at most two. Our proof yields a $O(nm)$-time algorithm which takes as input a $K_{2,3}$-induced minor-free graph with $n$ vertices and $m$ edges, and outputs a tree-width two graph $H$ with the desired additive distortion. For \emph{universally signable} graphs, a subclass of $K_{2,3}$-induced minor-free graphs, the time complexity of our algorithm is linear. As a consequence, we  obtain a truly sub-quadratic time additive constant factor approximation algorithm to compute the \emph{diameter} of a universally signable graph. In contrast, assuming the \emph{Strong Exponential Time Hypothesis} (\textsc{SETH}), the diameter of split graphs (a very restricted class of universally signable graphs), cannot be computed in truly sub-quadratic time [Borassi et al. (ENTCS, 2016)].  
	\end{abstract}

	\section{Introduction}
	
	
	When is a graph \emph{quasi-isometric} (\Cref{def:quasi}) to a much simpler \emph{host} graph? This question is central to \emph{coarse graph theory}~\cite{georgakopoulos2023graph}. When the host graphs are restricted to the class of graphs with bounded \emph{tree-width}, remarkably, the answer is provided by Nguyen, Scott \& Seymour~\cite{nguyen2025coarse} and independently by Hickingbotham~\cite{hickingbotham2025graphs}. Specifically, they proved the following:
	
	\begin{theorem}[\cite{nguyen2025coarse,hickingbotham2025graphs}]
		A graph is quasi-isometric to a graph with bounded tree-width if and only if it has a tree-decomposition where each bag consists of a bounded number of balls of bounded diameter.
	\end{theorem}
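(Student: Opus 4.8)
The plan is to prove both implications of the characterization, working throughout with the shortest-path metrics $d_G$ and $d_H$ and reading ``bounded tree-width'' as tree-width at most a fixed constant $w$. A quasi-isometry will be a map $f\colon V(G)\to V(H)$ for which there exist constants $c\ge 1$ and $C\ge 0$ with $\tfrac1c\,d_G(u,v)-C\le d_H(f(u),f(v))\le c\,d_G(u,v)+C$ for all $u,v$, and whose image is $C$-dense in $H$. The ``only if'' direction (quasi-isometry $\Rightarrow$ ball-decomposition) is the easy pull-back, while the ``if'' direction (ball-decomposition $\Rightarrow$ quasi-isometry) is where the work lies, since there one must manufacture a bounded-tree-width graph together with a distance-preserving map.

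For the forward direction, assume $f\colon V(G)\to V(H)$ is such a quasi-isometry with $\operatorname{tw}(H)\le w$, and fix a tree-decomposition $(T,\{W_t\})$ of $H$ with $|W_t|\le w+1$. Set $\rho=c+C$ and, for each $h\in V(H)$, define the thickened fibre $Q_h=\{v\in V(G):d_H(f(v),h)\le\rho\}$. The key observation is that each $Q_h$ has bounded $G$-diameter: if $u,v\in Q_h$ then $d_H(f(u),f(v))\le 2\rho$, so the lower quasi-isometry bound forces $d_G(u,v)\le c(2\rho+C)$; hence $Q_h$ is contained in a ball of radius $c(2\rho+C)$, and up to the standard equivalence between bounded-diameter sets and bounded-radius balls we may treat it as one. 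I then reuse the tree $T$ and set $B_t=\bigcup_{h\in W_t}Q_h$, a union of at most $w+1$ bounded-diameter balls. The tree-decomposition axioms are then routine: every $v$ lies in $Q_{f(v)}\subseteq B_t$ for any $t$ with $f(v)\in W_t$; for an edge $uv$ we have $d_H(f(u),f(v))\le\rho$, so both endpoints lie in $Q_{f(u)}$ and hence in a common bag; and the set of nodes $t$ with $v\in B_t$ equals $\{t:W_t\cap B_H(f(v),\rho)\ne\varnothing\}$, which is connected because $B_H(f(v),\rho)$ induces a connected subgraph of $H$ and a union of the (connected) occurrence-subtrees $T_h$ over a connected vertex set is connected.

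For the reverse direction, assume $(T,\{B_t\})$ is a tree-decomposition of $G$ in which each bag is a union of at most $k$ balls of radius at most $r$, with centres $z_t^1,\dots,z_t^k$. I would take $V(H)$ to be the collection of these centres and join two of them by an edge exactly when their $G$-distance is at most a threshold $\alpha=2r+1$, so that $d_H$ is built directly from $d_G$; the map $f$ sends $v\in V(G)$ to any centre within distance $r$ of it. Coarse density is immediate, and the metric bounds come in matching pairs: an $H$-edge lifts to a $G$-path of length at most $\alpha$, so concatenating a shortest $H$-path gives $d_G(u,v)\le \alpha\, d_H(f(u),f(v))+2r$, i.e.\ the lower bound with multiplicative constant $\alpha$; conversely a $G$-geodesic can be shadowed by a walk through nearby centres, its successive vertices lying within distance $\alpha$, yielding $d_H(f(u),f(v))\le d_G(u,v)+O(1)$.

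The main obstacle is bounding $\operatorname{tw}(H)$, because the edges of $H$ encode $G$-closeness, which is a global relation, whereas a bounded-width tree-decomposition must keep adjacencies local to the tree. Two centres can be close in $G$ yet sit in bags that are far apart in $T$, so one cannot simply reuse $T$. The resolution I would pursue exploits the decomposition itself: a short $G$-path between centres lying in far-apart $T$-nodes must cross every separator $B_s\cap B_{s'}$ along the connecting $T$-path, and each such separator is covered by the $k$ radius-$r$ balls of its node, hence meets one of the centres $z_s^j$. Thus any $H$-edge between distant nodes factors through a bounded number of ``hub'' centres at the intermediate separators, and one can inherit from $T$ a tree-decomposition of $H$ of width bounded in terms of $k$ and $r$. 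Carrying out this factorisation so that the metric bounds above and the tree-width bound hold simultaneously is the technical heart of the argument.
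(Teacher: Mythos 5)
First, note that the paper does not prove this statement: it is quoted verbatim from Nguyen--Scott--Seymour and Hickingbotham as background, so there is no in-paper proof to compare against, and your proposal has to be judged on its own. Your ``only if'' direction is essentially correct and is the standard pull-back: the thickened fibres $Q_h=f^{-1}\bigl(B_H(h,\rho)\bigr)$ have bounded $G$-diameter by the lower quasi-isometry inequality, the bags $B_t=\bigcup_{h\in W_t}Q_h$ satisfy the vertex and edge axioms, and your connectivity argument (the occurrence set of $v$ is $\bigcup\{T_h: h\in B_H(f(v),\rho)\}$, a union of pairwise-linked subtrees over a connected vertex set) is sound. One small caveat: the $Q_h$ are bounded-diameter sets, not balls, and you cannot simply enlarge each $Q_h$ to a genuine ball $B_G(x_h,R)$, since enlarging bags can destroy the subtree-connectivity axiom; so you either need the ``union of boundedly many bounded-diameter sets'' formulation of the theorem or a further argument for the ball formulation.

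The ``if'' direction, however, has a genuine gap --- one you partly acknowledge, but which is fatal as written. Your net graph $H$ (centres joined when $d_G\le 2r+1$) is indeed quasi-isometric to $G$; the metric estimates are fine. But nothing bounds $\operatorname{tw}(H)$, and in fact it is unbounded: take $G$ a star with $N$ long subdivided legs (a tree, so it trivially has a qualifying decomposition, e.g.\ one bag $B_G(\mathrm{hub},1)$, a single ball of radius $1$, and all remaining bags unions of two radius-$0$ balls). The first vertex of each leg is a centre, and these $N$ centres are pairwise at $G$-distance $2\le 2r+1$, so $H$ contains $K_N$ and $\operatorname{tw}(H)\ge N-1$, even though $\operatorname{tw}(G)=1$. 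So the construction itself is wrong, not merely incomplete: closeness in $G$ is a global relation, and as your own example now shows, it can concentrate arbitrarily many centres from far-apart tree nodes into one clique. Your proposed repair --- factoring long $H$-edges through hub centres on the intermediate separators --- is precisely where all the substance of the theorem lives, and it is not carried out; the published proofs do not proceed via this net graph at all, but build the host directly from the tree-decomposition (roughly, from pairs (node, ball) with carefully controlled adjacencies, resp.\ via partitions into bounded-diameter parts indexed by a bounded-tree-width graph), and verifying the quasi-isometry bounds for that host is the hard part. As it stands, your write-up proves only the easy direction.
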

	
	\sloppy When quasi-isometric maps with only \emph{additive distortions} (\Cref{def:add}) are allowed, the situation is much less understood. Brandst{\"a}dt et al.~\cite{brandstadt1999distance} proved that \emph{chordal} graphs (i.e., graphs without any induced cycle of length greater than three) admit a quasi-isometry to trees with additive distortion at most $2$. This result has been later generalized to \emph{$k$-chordal} graphs~\cite{chepoi2000note}, and more generally to graphs of bounded \emph{treelength}~\cite{dourisboure2007tree}.  Berger \& Seymour~\cite{berger2024bounded} characterized graphs admitting quasi-isometry with additive distortion to trees. See~\cite{dragan2025graph} for a comprehensive overview of graphs admitting a quasi-isometry with additive distortion to trees.
	
	\emph{When does a graph class admit quasi-isometry with additive distortion to graphs with tree-width at most two?} The above question also relates to the following conjecture of Nguyen, Scott \& Seymour~\cite{nguyen2025coarse}.
	
	\begin{conjecture}[\cite{nguyen2025coarse}]\label{conj}
		There is a constant $k$, such that if a graph $G$ admits a quasi-isometry to a graph of tree-width at most two, then $G$ admits a quasi-isometry with additive distortion to a graph with tree-width at most $k$. 
	\end{conjecture}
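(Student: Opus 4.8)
The plan is to upgrade the given multiplicative quasi-isometry into one with only additive distortion by first extracting a \emph{coarse series-parallel decomposition} of $G$ and then realising it by an honest bounded-tree-width host graph. The hypothesis of \Cref{conj} supplies a quasi-isometry from $G$ to some graph $T$ of tree-width at most two, that is, a $K_4$-minor-free graph. I would begin by transporting the tree-decomposition of $T$ back through the quasi-isometry: combining it with the characterization of Nguyen, Scott and Seymour and of Hickingbotham (the theorem stated above) yields a tree-decomposition of $G$ in which every bag is the union of a bounded number of balls of bounded diameter. The first substantive step is to prove a \emph{tree-width-two refinement} of that characterization: because $T$ is $K_4$-minor-free, one should be able to arrange the decomposition so that not only the bags but also the adhesion sets (intersections of adjacent bags) are unions of a bounded number of bounded-diameter balls, and so that the ``torso pattern'' obtained by contracting each ball to a point is itself $K_4$-minor-free up to bounded-size gadgets. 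This is the coarse analogue of the fact that the tree-width-two graphs are exactly the graphs built from edges by series and parallel composition.

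The second step is to build the host graph $H$ from this refined decomposition. Each ball is contracted to a single vertex (or a bounded-size gadget recording the at-most-two adhesion vertices through which geodesics must enter and leave), and these vertices are joined according to the series-parallel pattern of the decomposition. Contracting a ball of diameter $D$ perturbs distances between its boundary points by at most $D$, a bounded \emph{additive} error; since each bag contains only a bounded number of balls and each adhesion has bounded size, the resulting $H$ has tree-width bounded by a constant $k$ depending only on these parameters. This is exactly where the machinery developed in the present paper for $K_{2,3}$-induced-minor-free graphs should be reused: a $K_{2,3}$ induced minor is precisely the obstruction of ``three internally disjoint bundles between two coarse apices,'' so the cycle analysis and the explicit tree-width-two host construction carried out here for the induced-minor-free case serve as the combinatorial engine for assembling $H$ in the coarse setting.

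The third and hardest step is to certify that the distortion of the map $G \to H$ is genuinely \emph{additive} rather than merely multiplicative. The danger is accumulation: a geodesic of $G$ may cross many balls, and a per-ball additive error of $D$ summed over $\Theta\!\left(d_G(u,v)\right)$ balls would reproduce a multiplicative distortion. Overcoming this is the main obstacle, and I expect it to be where the tree-width-two (rather than general bounded-tree-width) hypothesis is essential. The point to exploit is that in a series-parallel structure every geodesic is \emph{aligned} with the decomposition: between two points it is forced through a bounded sequence of separating adhesion pairs, so it enters and leaves each ball it visits through a controlled pair of portals and cannot zig-zag. One then argues that additive error is incurred only at the two endpoints' balls together with the bounded-diameter ``parallel'' bundles the geodesic traverses, giving a total additive error bounded by a constant independent of $d_G(u,v)$. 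Making this alignment argument quantitative --- ruling out the pathological accumulation that does occur at higher tree-width and that is responsible for the non-constant additive error unavoidable in the general bounded-treelength setting~\cite{dourisboure2007tree} --- is the crux, and a careful coarse version of the portal and geodesic analysis from the induced-minor-free proof is what I would develop to close it.
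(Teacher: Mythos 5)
The statement you are trying to prove is not a theorem of this paper at all: it is an open conjecture of Nguyen, Scott and Seymour, reproduced here as \Cref{conj}, and the paper offers no proof of it. What the paper proves (\Cref{main:th:K23}) is the special case of $K_{2,3}$-induced-minor-free graphs, and it explicitly credits the general validity of \Cref{conj} for that class to a combination of results of Fujiwara--Papasoglu and Nguyen--Scott--Seymour. So there is no ``paper proof'' to match your proposal against, and your proposal would need to be a complete argument standing on its own --- which it is not. By your own account, the third step (showing that the per-ball additive errors do not accumulate along a geodesic crossing $\Theta(d_G(u,v))$ balls) is ``the crux'' that you ``would develop''; but that non-accumulation statement \emph{is} the conjecture, in only slightly disguised form. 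Likewise, your first step asserts without proof a ``tree-width-two refinement'' of the ball-decomposition characterization in which adhesion sets are boundedly many balls and the contracted pattern is $K_4$-minor-free up to gadgets; no such refinement is known, and nothing in the quasi-isometry hypothesis forces the adhesion sets of the transported decomposition to be coarsely small in this way.

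There is also a concrete obstruction to your plan of reusing this paper's machinery as the ``combinatorial engine.'' The engine here is twofold: \Cref{prp:ind2} (minimal cutsets of $K_{2,3}$-induced-minor-free graphs have stable sets of size at most two, which makes the parent sets of clusters tractable) and \Cref{prp:sipco} (bounded strong isometric path complexity, which via \Cref{lem:sipco-distort} converts rooted-path estimates into a global additive bound). Both are consequences of forbidding $K_{2,3}$ as an induced minor, and neither follows from being quasi-isometric to a tree-width-two graph: $K_{2,3}$ itself has tree-width two, so graphs in the hypothesis class of \Cref{conj} can contain arbitrarily large induced $K_{2,3}$-like structures. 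Indeed the paper itself points this out --- graphs of tree-width at most two have \emph{unbounded} strong isometric path complexity, and in the conclusion it notes that even-hole-free and pyramid-only graphs (which are quasi-isometric to tree-width-two graphs) defeat the technique for exactly this reason, so ``our technique cannot be used directly.'' Your proposal is a reasonable research program, but as a proof it has two unfilled holes, one of which is equivalent to the conjecture itself, and its proposed toolbox provably does not transfer to the coarse setting.
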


	In this paper, we consider the class of \emph{$K_{2,3}$-induced minor-free} graphs. A graph $H$ is an \emph{induced minor} of a graph $G$ if $H$ can be obtained from $G$ by a sequence of edge contractions and vertex deletions. Otherwise, $G$ is \emph{$H$-induced minor-free}. The class of $K_{2,3}$-induced minor-free graphs have been studied extensively in structural graph theory~\cite{DBLP:conf/iwoca/DallardDHMPT24,dallard2024treewidth}. $K_{2,3}$-induced minor-free graphs generalize many popular graph classes. Examples include outerplanar graphs, circular-arc graphs~\cite{francis2014blocking}, chordal graphs, universally signable graph~\cite{conforti1997universally} etc. Coarse generalizations of $K_{2,3}$-induced minor-free graphs have been studied~\cite{chepoi2012constant}. A recent result of Albrechtsen et al.~\cite{albrechtsen2024characterisation} implies that $K_{2,3}$-induced minor-free graphs are quasi-isometric to graphs with tree-width at most two. Combining results of Fujiwara \& Papasoglu~\cite{fujiwara2023coarse} and Nguyen, Seymour \& Scott~\cite{nguyen2025coarse} prove that \Cref{conj} holds for $K_{2,3}$-induced minor-free graphs\footnote{We thank Robert Hickingbotham for informing us of this connection}.	In this paper, we provide a different constructive proof for the above. Specifically, we prove the following theorem.

	\begin{theorem}\label{main:th:K23}
		There is a constant $c$, and an $O(nm)$-time algorithm that takes as input a $K_{2,3}$-induced minor-free graph $G$ with $n$ vertices and $m$ edges, and outputs a graph $H$ with $V(G)=V(H)$, tree-width at most two, and $\dmod{\distance{G}{u}{v} - \distance{H}{u}{v}}\leq c$ for any $u,v\in V(G)$. 
	\end{theorem}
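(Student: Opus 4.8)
The plan is to give a constructive, algorithmic proof that exploits the structural consequences of excluding $K_{2,3}$ as an induced minor. The high-level strategy is to build the target tree-width-two graph $H$ by decomposing $G$ along small separators and replacing complicated ``fat'' pieces with short cycles or paths that preserve distances up to an additive constant. Concretely, I would first establish the structural backbone: forbidding $K_{2,3}$ as an induced minor forces that whenever two vertices $u,v$ are joined by three internally-disjoint paths in $G$, these paths cannot all be ``long and far apart'' simultaneously, since contracting them would produce a $K_{2,3}$. This locality of branching is the key structural lever, and I would formalise it as a lemma stating that the ``interesting'' regions of $G$ between any two branch points behave essentially like (thickened) cycles.

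Second, I would carry out a decomposition of $G$ into biconnected components and then, within each biconnected block, isolate the cyclic structure. For each block I would select a collection of \emph{reference} cycles (geodesic or near-geodesic cycles) and show, using the $K_{2,3}$-induced-minor-free hypothesis, that every vertex lies within bounded distance of one such cycle. The target graph $H$ is then assembled by gluing together these cycles and the tree-structure linking the blocks; cycles have tree-width two and a tree of cycles glued at cut-vertices still has tree-width at most two, so $H$ will have the required width. Crucially, I must specify the vertex identification so that $V(G)=V(H)$: each vertex of $G$ is placed on (or identified with a vertex of) the cycle it is near, and the bounded distance to the reference structure is what yields the additive distortion bound.

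Third, the distance-preservation argument splits into two inequalities. For the lower bound $\distance{H}{u}{v}\ge \distance{G}{u}{v}-c$, I would exhibit a Lipschitz-type map from $H$-paths back into $G$, so that any short path in $H$ corresponds to a walk in $G$ of comparable length. For the upper bound $\distance{H}{u}{v}\le \distance{G}{u}{v}+c$, I would take a $G$-geodesic from $u$ to $v$, project it onto the reference cycles/tree, and argue that the projection only inflates the length by a bounded amount per transition between structural pieces --- and that the number of such transitions along a geodesic is itself controlled, again because excluding $K_{2,3}$ prevents a geodesic from weaving in and out of many distinct cyclic regions. The $O(nm)$ running time comes from computing BFS layers (a single-source shortest-path computation, $O(m)$ each, from up to $O(n)$ sources) to identify geodesic cycles and nearest-reference assignments, while for universally signable graphs their stronger structure should let me avoid the all-pairs overhead and obtain linear time.

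The hard part will be the upper-bound distortion argument, specifically controlling how a geodesic of $G$ interacts with the chosen reference cycles. A geodesic may repeatedly enter and exit the bounded neighbourhoods of several cycles, and I must ensure that the total additive error accumulated across all transitions stays bounded by an absolute constant $c$ independent of the number of pieces. This is where the induced-minor hypothesis must be used most carefully: I expect to prove a ``no long detour'' lemma showing that two branch regions within a block are separated by at most one relevant cycle segment, so that a geodesic's interaction is effectively local and the errors do not compound. Getting the constant $c$ to be genuinely uniform, rather than growing with the depth of the block-cut tree or the number of cycles, is the crux of the argument and the place where the proof is most likely to require delicate case analysis.
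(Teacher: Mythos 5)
There are two genuine gaps here, and they sit exactly where your proof would have to do its real work. First, your structural backbone is false for a large subclass of the inputs: chordal graphs (indeed all split graphs and cliques) are $K_{2,3}$-induced minor-free, have no induced cycles of length at least four, and their coarse geometry is tree-like rather than cycle-like, so the claim that within each biconnected block ``every vertex lies within bounded distance of a reference cycle'' and that $H$ can be assembled as a tree of cycles glued at cut-vertices cannot hold --- a split graph is typically $2$-connected with unbounded tree-width and admits no such family of near-geodesic cycles. The class is much richer than cactus-like graphs, and any correct construction must handle tree-like, clique-like, and cycle-like regions uniformly. The paper does this with a layering partition rooted at a single vertex $r$: the parent set of each BFS-cluster is a minimal cutset, hence (by \Cref{prp:ind2}) has stable sets of size at most two, and \Cref{algo:embed} rewires each cluster to at most two representative parent vertices; tree-width two is then obtained not by explicit gluing but by proving $H$ is triangle-, theta-, and wheel-free and invoking \Cref{prp:tree-width}.

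Second, you correctly identify the crux --- that the additive error must not compound as a geodesic crosses many structural pieces --- but you offer only a hoped-for ``no long detour'' lemma with no mechanism to bound the number of transitions. The paper's mechanism is the \emph{strong isometric path complexity}: by \Cref{prp:sipco} every isometric path of $G$ is covered by at most $71$ $r$-rooted isometric paths, so it suffices to bound the distortion of subpaths of rooted geodesics (\Cref{lem:root-path-distort}) and pay a bounded penalty per piece (\Cref{lem:sipco-distort}); without some such covering invariant your accumulated error has no a priori bound. Relatedly, your lower-bound argument fails as stated: a Lipschitz-type map sending each $H$-edge to a $G$-path of length at most some constant (the paper proves length $\leq 4$ in \Cref{clm:lem-G-1}) yields only the multiplicative bound $\distance{G}{u}{v}\leq 4\,\distance{H}{u}{v}$, not an additive one. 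The paper closes this by showing $H$ is itself universally signable, hence $K_{2,3}$-induced minor-free with strong isometric path complexity at most $71$, and then runs the same rooted-path argument in $H$ (\Cref{clm:lem-G-2}); some analogous self-application or a genuinely different idea is indispensable for the additive lower bound.
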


	In the above theorem, $\distance{G}{.}{.}$ and  $\distance{H}{.}{.}$ denote the standard graph metric in $G$ and $H$, respectively. 
	When the input graphs are restricted to \emph{universally signable} graphs~\cite{conforti1997universally}, a subclass of $K_{2,3}$-induced minor-free graph, our algorithm runs in linear time, which  has the following consequence. 
	
	
	The \emph{diameter} of a graph is the length of a longest isometric path (i.e. a path of minimum length between the end-vertices) between two of its vertices. Given an undirected graph $G$, \textsc{Diameter} is the algorithmic task of computing the diameter of $G$. \textsc{Diameter} is a fundamental problem	with countless applications in computer science. The fastest combinatorial algorithm for \textsc{Diameter} has a running time of $O(nm)$ in $m$-edge, $n$-vertex graphs. Surprisingly, Roditty and Vassilevska Williams~\cite{roditty2013fast} proved that a truly sub-quadratic $\left(\frac{3}{2}-\epsilon \right)$-approximation algorithm for \textsc{Diameter} would falsify the \emph{Strong Exponential Time Hypothesis}~\cite{impagliazzo2001problems}. This motivated researchers to study the \emph{approximability} of \textsc{Diameter} on special graph classes. However,  Borassi, Crescenzi \& Habib~\cite{borassi2016into} proved that assuming SETH, diameter  cannot be computed optimally in truly sub-quadratic time even on split graphs, i.e. graphs whose vertex set can be partitioned into a clique and an independent set. 
	Split graphs are universally signable graphs. Hence assuming SETH, the diameter of universally signable graphs cannot be computed optimally in truly sub-quadratic time. The diameter of graphs with tree-width at most two can be computed in truly sub-quadratic time~\cite{cabello2018subquadratic}. Hence, \Cref{main:th:K23} imply the following corollary.
	
	\begin{corollary}\label{cor:univ}
		\textsc{Diameter} admits a truly sub-quadratic additive constant factor approximation algorithm on universally signable graphs.
	\end{corollary}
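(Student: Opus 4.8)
The plan is to derive \Cref{cor:univ} directly from \Cref{main:th:K23} together with the known truly sub-quadratic diameter algorithm for graphs of tree-width at most two~\cite{cabello2018subquadratic}. Given a universally signable graph $G$, I would first invoke the algorithm of \Cref{main:th:K23}, which on this subclass runs in linear time, to produce a graph $H$ with $V(H)=V(G)$, tree-width at most two, and $\dmod{\distance{G}{u}{v} - \distance{H}{u}{v}}\leq c$ for every pair $u,v\in V(G)$, where $c$ is the absolute constant supplied by that theorem. The approximation algorithm then simply computes and outputs $\mathrm{diam}(H)$.

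The key step is to argue that a uniform additive distortion on all pairwise distances transfers to the diameter. Let $(u^{*},v^{*})$ be a pair of vertices attaining $\mathrm{diam}(G)$. Then $\mathrm{diam}(H)\geq \distance{H}{u^{*}}{v^{*}}\geq \distance{G}{u^{*}}{v^{*}} - c = \mathrm{diam}(G) - c$. Symmetrically, applying the inequality to a diametral pair of $H$ yields $\mathrm{diam}(G)\geq \mathrm{diam}(H) - c$. Combining the two gives $\dmod{\mathrm{diam}(G) - \mathrm{diam}(H)}\leq c$, so the reported value $\mathrm{diam}(H)$ is an additive-$c$ approximation of $\mathrm{diam}(G)$, as required.

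It remains to account for the running time. The construction of $H$ is linear by the universally signable case of \Cref{main:th:K23}, and since $H$ has tree-width at most two, $\mathrm{diam}(H)$ can be computed in truly sub-quadratic time using the algorithm of Cabello~\cite{cabello2018subquadratic}. As the sum of a linear term and a truly sub-quadratic term is truly sub-quadratic, the overall procedure is truly sub-quadratic. I do not anticipate any genuine obstacle in this corollary: all of the difficulty is absorbed into the linear-time construction guaranteed by \Cref{main:th:K23}, and the argument above is merely the routine observation that bounded additive distortion of the metric implies bounded additive distortion of the diameter, composed with an off-the-shelf sub-quadratic diameter routine for bounded-tree-width graphs.
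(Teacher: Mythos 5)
Your overall route coincides with the paper's: compose the construction of \Cref{main:th:K23} with the truly sub-quadratic diameter algorithm for tree-width-two graphs of~\cite{cabello2018subquadratic}, and observe that a uniform additive distortion $c$ on all pairwise distances yields $\dmod{\mathrm{diam}(G)-\mathrm{diam}(H)}\leq c$. That transfer argument is correct (the paper leaves it implicit), and the two-sided bound via the diametral pairs of $G$ and of $H$ is exactly right.

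There is, however, one genuine gap, and it sits precisely where the paper's proof does its only real work: the running time of the construction. \Cref{main:th:K23} as stated guarantees only $O(nm)$ time, which is \emph{not} truly sub-quadratic (already for $m=\Theta(n)$ it is quadratic in the input size), so you cannot appeal to ``the universally signable case of \Cref{main:th:K23}'' for linearity --- no such case appears in the theorem statement, and the linear-time claim in the introduction is asserted there as a consequence of the argument that constitutes the paper's proof of \Cref{cor:univ}. Citing it as given makes your proof circular at this point. The missing justification is the following: by \Cref{prp:decom}, every minimal cutset of a universally signable graph is either a clique or a stable set of size two, and since each parent set $\ParentSet{r}{S}$ is a minimal cutset (\Cref{lem:min-cut-set}), the connectivity test in \Cref{if-main} of \Cref{algo:embed} can be replaced by the $O(1)$ test ``$\InducedSub{G}{\ParentSet{r}{S}}$ contains at least three vertices or is an edge'' (three or more vertices force the cutset to be a clique, hence connected, while two nonadjacent vertices form the disconnected case). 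Together with the linear-time construction of the layering tree~\cite{chepoi2000note}, this brings the construction of $H$ down to $O(n+m)$ on universally signable inputs, after which your concluding sentence --- linear plus truly sub-quadratic is truly sub-quadratic --- goes through. Everything else in your write-up is fine; you only need to supply this (or an equivalent) argument rather than treat the linear-time bound as a black box.
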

	
	Below, we provide an informal description of the tools used in the proofs of \Cref{main:th:K23}.  Then,	we provide an overview of the proofs.
	
	\medskip \noindent \textbf{Important tools:} The proof of \Cref{main:th:K23} uses \emph{layering partition}~\cite{brandstadt1999distance,chepoi2000note} and the notion of \emph{strong isometric path complexity}~\cite{c2023isometric} of graphs. 
	Layering partitions have been used to solve numerous problems in metric graph
	theory~\cite{brandstadt1999distance,chepoi2000note,chepoi2012constant,dragan2025graph}. See \Cref{sec:layer} for definitions. Informally, in 
	a layering partition, the vertex set of a graph is partitioned into \emph{clusters} w.r.t a fixed \emph{root} vertex. Two vertices are in the same cluster if they are at a distance $k$ from the root for some $k\geq 0$ and are end-vertices of a path that is disjoint from the ball of radius $k-1$ centered at 
	the root vertex. It was shown in~\cite{brandstadt1999distance,chepoi2000note} that the adjacency relation between the clusters is captured by a tree, known as the \emph{layering tree}. Moreover, for a fixed root vertex, the corresponding layering tree can be constructed in linear time~\cite{chepoi2000note}. 
	
	The \emph{strong isometric path complexity} is a recently introduced graph invariant that captures how arbitrary isometric paths of a 
	graph can be viewed as a union of few ``rooted" isometric paths (i.e. {isometric} paths with a common end-vertex). See \Cref{def:strong}. Informally, if the strong isometric path complexity of a graph $G$ is at most $k$, then for any isometric path $P$ of $G$ and any vertex of $v\in V(G)$, the vertices of $P$ can be covered with $k$-many $v$-rooted isometric paths of $G$. 	Originally, strong isometric path complexity and its weaker variant isometric path complexity were introduced to propose \emph{approximation algorithms} for the problem \textsc{Isometric Path Cover}, where the objective is
	to find a minimum-cardinality set of isometric paths so that each vertex of the graph belongs to at least one of the solution paths. \textsc{Isometric Path 
		Cover} has been studied recently in the algorithmic graph theory community~\cite{c22isom,c2023isometric,MFCS2024}. It was shown that 
	\textsc{Isometric Path Cover} admit a polynomial-time $c$-approximation algorithm on graphs with strong isometric path complexity at most 
	$c$~\cite{c2023isometric} (while being NP-hard even on chordal graphs whose strong isometric path complexity is at most 
	four~\cite{c22isom,c2023isometric}). From  the structural perspective,   many important graph classes such as {\emph{hyperbolic} graphs~\cite{gromov1987}, 
		\emph{outerstring} graphs~\cite{rok2019outerstring, bose2022computing}, and \emph{(theta,prism,pyramid)}-free graphs~\cite{abrishami2022graphs,chudnovsky2024tree} have bounded strong isometric path
		complexity~\cite{c2023isometric}. It was shown in~\cite{c2023isometric, chakraborty2025strong} that $K_{2,3}$-induced minor-free graphs have bounded strong isometric path complexity. We shall use this fact in the proof of \Cref{main:th:K23}. On the other hand, graphs with tree-width at most two have unbounded strong isometric path complexity.

		\medskip \noindent \textbf{Overview of the proofs of \Cref{main:th:K23}:}
		Given a $K_{2,3}$-induced minor free graph $G$, we choose a vertex $r\in V(G)$ arbitrarily as the root and construct the layering tree w.r.t $r$, denoted by $\clusterGraphv{r}{G}$. Let $R=\{r\}$ be the \emph{root cluster}. When $\clusterGraphv{r}{G}$ is rooted at $R$, there is a natural \emph{parent-child} relationship among the clusters. For a cluster $S$, the \emph{parent set} of $S$, denoted as $\ParentSet{r}{S}$, is the set of vertices that lie in the parent of $S$ and are adjacent to the vertices in $S$. We observe that given a cluster $S$ whose parent is not $R$, $\ParentSet{r}{S}$  is a \emph{minimal cutset} of the original input graph. The size of the largest stable set in the graph induced by a minimal cutset of $K_{2,3}$-induced minor-free graph is at most two~\cite{dallard2024treewidth}. Hence, for any cluster $S$ whose parent is not $R$, the size of the largest stable set in the graph induced by $\ParentSet{r}{S}$ in $G$ is at most two. 
		
		We begin our construction by initializing a graph $H=(V(G),\emptyset)$. We traverse $\clusterGraphv{r}{G}$ (rooted at $R$) bottom-up and process the clusters in $G$ accordingly. For each cluster $S$, depending on the structure of the graph induced by $\ParentSet{r}{S}$, we introduce edges in $H$ between the vertices of $S$ and at most two special vertices of $\ParentSet{r}{S}$. See \Cref{algo:embed} for details of the construction of $H$. Note that the final resulting graph $H$ may not be a subgraph of $G$. 
		
		To prove the upper bound on the distortion, we show that $H$ is connected, and for any edge $uv\in E(G)$, $\distance{H}{u}{v}\leq 16$. Next, we show that for any $(u,v)$-isometric path, which is a subpath of some $r$-rooted isometric path in $G$, we have $\distance{H}{u}{v}\leq \distance{G}{u}{v}+20$. Now consider any $(x,y)$-isometric path $P$ in $G$. Since the strong isometric path complexity of $G$ is bounded, the edges of $P$ can be partitioned into a bounded number of edges and a bounded number of isometric paths that are subpaths of $r$-rooted isometric paths in $G$. Now, using our earlier observations, we show that there exists a constant $c$ (independent of $G$), such that  $\distance{H}{x}{y}\leq \distance{G}{x}{y} + c$. Using a similar technique, we show that for any $u,v\in V(G)$, $\distance{G}{u}{v}\leq \distance{H}{u}{v}+c$.  
		
		To prove the upper bound on the tree-width of $H$, we show that  $H$ is triangle-free and does not contain \emph{wheel} or \emph{theta} as induced subgraphs. See \Cref{sec:k23} for definitions. The absence of these structures in $H$ implies that $H$ is a universally signable graph, and has tree-width at most two. This concludes the proof of \Cref{main:th:K23}.

		\medskip \noindent \textbf{Organisation:} In \Cref{sec:prelim}, we introduce some notations, recall some existing results, and prove some preliminary observations. In \Cref{sec:proof}, we prove \Cref{main:th:K23} and conclude in \Cref{sec:conclude}.

		\section{Preliminaries and some observations}\label{sec:prelim}
		
		
		All graphs in this paper are finite, undirected, unweighted, and simple. Let $G$ be a graph. For a set $S\subseteq V(G)$, $\InducedSub{G}{S}$ denotes the subgraph of $G$ induced by $S$, and $G-S =\InducedSub{G}{V(G)\setminus S}$. When $S=\{w\}$, we write $G-\{w\}$ as $G-w$. For two graphs $G,H$, we write $H\subseteq G$, if $H$ is a subgraph of $G$. A \emph{stable set} of a graph is a set of pairwise non-adjacent vertices. A \emph{cutset} of a graph $G$ is a set of vertices of $G$ whose removal disconnects $G$. For two vertices $u,v\in V(G)$, a \emph{$(u,v)$-cutset} of a graph $G$ is a set of vertices of $G$ whose removal disconnects $u,v$ in $G$. A cutset $S$ of a graph $G$ is \emph{minimal} if no proper subset of $S$ is a cutset of $G$. Similarly, for two vertices $u,v\in V(G)$, a $(u,v)$-cutset $S$ of a graph $G$ is \emph{minimal} if no proper subset of $S$ is a $(u,v)$-cutset of $G$.

		We state the definition of \emph{tree-decompositions} for completeness.	A \emph{tree-decomposition} of a graph $G$ is a rooted tree $T$ 
		where each node $v$ is associated to a subset $X_v$ of $V(G)$ called \emph{bag}, such that the set of nodes of $T$ containing a given vertex of $G$ forms a nonempty connected subtree of $T$, and any two adjacent vertices of $G$ appear in a common node of $T$.
		The \emph{width} of $T$ is the maximum cardinality of a bag minus $1$. The \emph{tree-width} of $G$ is the 	minimum integer $k$ such that $G$ has a tree-decomposition of width $k$.

		\subsection{$K_{2,3}$-induced minor-free graphs}	\label{sec:k23}

		We shall use the following result of Dallard, Milani{\v{c}}, and {\v{S}}torgel~\cite{dallard2024treewidth}.

		\begin{proposition}[\cite{dallard2024treewidth}]\label{prp:ind2}
			Let $G$ be a $K_{2,3}$-induced minor-free graph. For any two vertices $u,v\in V(G)$, the size of the largest stable set of the graph induced by any minimal $(u,v)$-cutset of $G$ is at most two. 
		\end{proposition}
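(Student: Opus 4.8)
The plan is to argue by contradiction: suppose $S$ is a minimal $(u,v)$-cutset of $G$ such that $\InducedSub{G}{S}$ contains a stable set $\set{a,b,c}$ of size three, and then exhibit $K_{2,3}$ as an induced minor of $G$, contradicting the hypothesis. The two vertices on the small side of $K_{2,3}$ will come from the connected components of $G-S$ containing $u$ and $v$, while $a,b,c$ will serve as the three vertices on the large side.

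First I would establish the standard structural consequence of minimality. Let $C_u$ and $C_v$ be the connected components of $G-S$ containing $u$ and $v$, respectively; these are distinct since $S$ separates $u$ from $v$. I claim that every vertex $s\in S$ has a neighbor in $C_u$ and a neighbor in $C_v$. Indeed, since $S$ is a \emph{minimal} $(u,v)$-cutset, $S\setminus\set{s}$ fails to separate $u$ and $v$, so there is a $(u,v)$-path $P$ avoiding $S\setminus\set{s}$; as $S$ is a cutset, $P$ must meet $S$, hence $P$ passes through $s$ and through no other vertex of $S$. The neighbor of $s$ along $P$ on the $u$-side lies on a subpath from $u$ to $s$ that avoids $S$ entirely, so it belongs to $C_u$; symmetrically, the neighbor on the $v$-side belongs to $C_v$. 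Applying this to each $s\in\set{a,b,c}$ gives $a,b,c$ a neighbor in both $C_u$ and $C_v$.

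Finally I would assemble the induced minor using the branch sets $C_u$, $C_v$, $\set{a}$, $\set{b}$, and $\set{c}$. These five sets are pairwise disjoint and each induces a connected subgraph. Contracting $C_u$ to a vertex $x$ and $C_v$ to a vertex $y$, and deleting everything outside these five sets, yields a graph on $\set{x,y,a,b,c}$ in which $x$ and $y$ are each adjacent to all of $a,b,c$, by the neighbor property just proved. The crucial point — and the step that distinguishes an \emph{induced} minor from an ordinary one — is to verify the required non-adjacencies: $a,b,c$ are pairwise non-adjacent because they form a stable set, and $x$ is non-adjacent to $y$ because $C_u$ and $C_v$ are distinct components of $G-S$ and hence have no edge between them in $G$. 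Thus the contracted graph is exactly $K_{2,3}$, so $K_{2,3}$ is an induced minor of $G$, the desired contradiction. I expect the only real subtlety to be this last non-edge check — in particular, the observation that no edge joins the two components $C_u$ and $C_v$, which is precisely what guarantees we obtain $K_{2,3}$ rather than $K_{2,3}$ plus an edge; the remaining verifications are routine.
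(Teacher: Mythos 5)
Your proof is correct, and there is nothing in the paper to compare it against: the paper states \Cref{prp:ind2} as an imported result from \cite{dallard2024treewidth} without giving any proof. Your argument — every vertex of a minimal $(u,v)$-cutset $S$ has a neighbor in each of the two components $C_u$ and $C_v$ of $G-S$ by the detour-through-$s$ argument, after which contracting $C_u$ and $C_v$ and deleting everything outside $C_u\cup C_v\cup\set{a,b,c}$ yields $K_{2,3}$ as an induced minor (with the two non-adjacency checks, stability of $\set{a,b,c}$ and the absence of edges between distinct components, correctly verified) — is complete and is the standard proof of this fact, essentially the one given in the cited reference.
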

		
		$K_{2,3}$-induced minor-free graphs can be also characterized by forbidding the  \emph{Truemper} configurations and its variants. A \emph{hole} is a cycle of length at least four. For an integer $t\geq 1$, a \emph{prism} is a graph made of three vertex-disjoint induced paths $P_1 \coloneq a_1\ldots b_1$, $P_2 \coloneq a_2\ldots b_2$, $P_3 \coloneq a_3\ldots b_3$ of lengths, such that $a_1 a_2 a_3$ and $b_1 b_2b_3$ are triangles and no edges exist between the paths except those of the two triangles. A
		prism is \emph{long} if at least one of its three paths has a length of at least 2. A \emph{pyramid} is a graph made of three induced paths $P_1 \coloneq a\ldots b_1$, $P_2 \coloneq a\ldots b_2$, $P_3 \coloneq a\ldots b_3$, such that $(i)$ {they are pairwise} vertex-disjoint except at $a$, $(ii)$ two of which are of lengths at least two, $(iii)$ $b_1 b_2 b_3$ is a triangle, and $(iv)$ no edges exist between the paths except those of the triangle and the three edges incident to $a$. A \emph{theta} is a graph made of three {internally} vertex-disjoint induced paths $P_1 \coloneq a\ldots b$, $P_2 \coloneq a\ldots b$, $P_3\coloneq a\ldots b$ of lengths at least two, and such that no edges exist between the paths except the three edges incident to $a$ and the three edges incident to $b$. A \emph{wheel} $(H, x)$ consists of a hole $H$, called the \emph{rim}, and a vertex $x$, called the \emph{center}, that has at least three neighbors on $H$. A \emph{sector} of a wheel $(H, x)$ is a path $P$ that is contained in $H$, whose end-vertices are neighbors of $x$ and whose internal vertices are not. Every wheel has at least three sectors. A wheel is \emph{broken} if
		at least two of its sectors have a length of at least 2. Dallard et al.~\cite{DBLP:conf/iwoca/DallardDHMPT24} proved the following theorem. 
		
		\begin{proposition}[\cite{DBLP:conf/iwoca/DallardDHMPT24}]\label{prp:induced-truem}
			A graph is $K_{2,3}$-induced minor-free if and only if it does not contain a long prism, pyramid, theta, or a broken wheel.
		\end{proposition}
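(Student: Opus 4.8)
The plan is to prove the equivalence in contrapositive form: a graph $G$ contains $K_{2,3}$ as an induced minor if and only if it contains one of the four configurations (long prism, pyramid, theta, broken wheel) as an induced subgraph. The ``only if'' direction is the easy one. Since an induced minor of an induced subgraph of $G$ is itself an induced minor of $G$, it suffices to exhibit, for each configuration, an explicit induced-minor model of $K_{2,3}$ using only the vertices of that configuration. For a theta with paths $P_1,P_2,P_3$ from $a$ to $b$, I would take the branch sets $\{a\}$, $\{b\}$ and the three sets of internal vertices of $P_1,P_2,P_3$; since $a\not\sim b$ and the paths are pairwise anticomplete, this is an induced $K_{2,3}$. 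For a pyramid I would keep the apex $a$ as one degree-three branch set, take two of the triangle vertices as the other, and use the interiors of the two long paths and the remaining base vertex (with the interior of the third path) as the three degree-two branch sets. For a long prism I would split the two triangles asymmetrically between the two degree-three branch sets so as to absorb the triangle edges, and use the interior of a long path as the crucial third degree-two branch set; the non-adjacency of the two centres is exactly where the ``long'' hypothesis is used. For a broken wheel I would delete one interior vertex from each of the two long sectors: removing these two vertices splits the rim into two arcs, each still meeting a spoke (as there are at least three spokes), and because the rim is chordless the two arcs are anticomplete, so they serve as the two degree-three branch sets, while the centre together with the two deleted vertices form the three pairwise non-adjacent degree-two branch sets.

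For the ``if'' direction I would argue via a minimal model. Fix an induced-minor model consisting of connected branch sets $A_1,A_2$ (for the two vertices of degree three) and $C_1,C_2,C_3$ (for the three vertices of degree two), with $A_1,A_2$ anticomplete, the $C_j$ pairwise anticomplete, and each $C_j$ adjacent to both $A_1$ and $A_2$; choose the model minimizing the total number of vertices used. Standard minimality reductions then force a rigid shape: each $C_j$ is an induced path attached to $A_1\cup A_2$ only at its two endpoints (otherwise it can be shortened), and each of $A_1,A_2$ is an induced tree whose leaves are exactly its attachment vertices, hence has at most three leaves and is homeomorphic to a single vertex, a path, or a claw. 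Consequently the union of the five branch sets is an induced subgraph consisting of two such trees joined by three pairwise-anticomplete induced paths, with no edges beyond the tree edges, the path edges, and the prescribed attachment edges.

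It then remains to run a finite case analysis over the homeomorphism type of $A_1$, the homeomorphism type of $A_2$, and the local adjacency pattern of the at most six attachment vertices. In each case I would read off the forced configuration: when no attachment vertices coincide or are adjacent one obtains three internally disjoint, pairwise anticomplete paths between two anchors, i.e.\ a theta; adjacencies among the attachment vertices create the triangles that turn the structure into a pyramid or a prism; and a claw-shaped $A_i$ whose centre sees the cycle formed by the other two paths yields a wheel. Throughout, the hypothesis $A_1\not\sim A_2$ forces the relevant paths to have length at least two, which supplies precisely the ``long'' and ``broken'' qualifiers. I expect the main obstacle to be exactly this case analysis: not the enumeration of topological types, which is short, but the bookkeeping needed to exclude stray chords so that the extracted subgraph is genuinely induced and matches a single configuration, together with the careful handling of degenerate cases in which attachment vertices coincide or a tree path has length zero.
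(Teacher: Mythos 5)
This proposition is not proved in the paper at all: it is imported verbatim from Dallard et al.~\cite{DBLP:conf/iwoca/DallardDHMPT24}, so there is no in-paper argument to compare against and your attempt must be judged on its own merits. Your first direction (each of the four configurations contains $K_{2,3}$ as an induced minor) is essentially correct: the theta, pyramid and broken-wheel models you describe all check out (for the pyramid you must pick as the second degree-three branch set the two triangle vertices ending the \emph{long} paths, and for the prism the asymmetric splitting can indeed be made to work in every case), and these are the standard constructions.

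The genuine gap is in the converse, at the sentence beginning ``Standard minimality reductions then force a rigid shape.'' Vertex-minimality of the model does \emph{not} force the structure you claim. It does force each $C_j$ to be an induced path whose interior is anticomplete to $A_1\cup A_2$, but it cannot eliminate \emph{multiple attachment edges}: an endpoint of $C_j$ (in particular a one-vertex $C_j$) may be adjacent to two or more vertices of $A_i$, and no vertex deletion reduces this. Likewise, a minimal $A_i$ need not be an induced tree; its blocks can be triangles (e.g.\ a triangle each of whose corners is the private attachment of one $C_j$ is vertex-minimal). Consequently the assertion that the union of the branch sets has ``no edges beyond the tree edges, the path edges, and the prescribed attachment edges'' is false, and it is false in exactly the cases that matter: under your rigid shape the union is triangle-free with unique attachments, so your case analysis could only ever produce a theta, i.e.\ it would ``prove'' the false statement that a $K_{2,3}$ induced minor forces a theta. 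A concrete counterexample: let $W$ be the broken wheel with rim $v_1v_2\cdots v_6$ and center $x$ adjacent to $v_1,v_3,v_5$. Then $W$ contains $K_{2,3}$ as an induced minor (branch sets $\{v_3\}$, $\{v_5,v_6,v_1\}$, $\{x\}$, $\{v_2\}$, $\{v_4\}$), but $W$ is triangle-free and one checks directly that it contains no theta, so no extraction matching your claimed shape can exist; the obstruction is precisely the one-vertex branch set $\{x\}$ carrying two attachment edges into the tree $\{v_5,v_6,v_1\}$. For the same reason your account of where wheels arise is wrong: the wheel center is not the center of a claw-shaped $A_i$ (a tree vertex has no neighbors outside $A_i$ beyond its attachments), but a degree-two branch set, or a path endpoint, with at least three attachment edges onto the cycle formed by the remaining branch sets.

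Repairing this means redoing the case analysis with multi-attachments and triangle blocks permitted --- multiple edges from a $C_j$-endpoint into $A_i$ generate the wheels, and adjacent attachment vertices or triangle blocks in $A_i$ generate the prisms and pyramids, with non-adjacency of $A_1$ and $A_2$ supplying the ``long'' and ``broken'' qualifiers as you anticipated. That is substantially heavier than the ``short enumeration of topological types'' you budget for, and it is the actual content of the cited proof in~\cite{DBLP:conf/iwoca/DallardDHMPT24}.
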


		
		\emph{Universally signable} graphs form a subclass of $K_{2,3}$-induced minor-free graphs. See~\cite{conforti1997universally} for the original definition of universally signable   graphs. A graph is universally signable if and only if it does not contain any theta, pyramid, prism, or wheel~\cite{conforti1997universally}. Important graph classes like chordal graphs, outerplanar graphs are sub classes of universally signable graphs. Any minimal cutset in a universally signable graph G is either a clique or a stable set of size two~\cite{conforti1997universally}.  The next proposition follows from the proof of the above result. 
		
		\begin{proposition}[\cite{conforti1997universally}]\label{prp:decom}
			Let $C$ be a cutset of a universally signable graph $G$ that disconnects $G$ into two connected components $G_1$ and $G_2$ and for each $i\in \{1,2\}$ and each $v\in C$, there is a vertex $u_i\in V(G_i)$ with $vu_i\in E(G)$. Then  $\InducedSub{G}{C}$ is  either a clique or a stable set of size two. Moreover, if $\InducedSub{G}{C}$ is a stable set of size two, then $V(G)=V(G_1)\cup V(G_2)\cup C$. 
		\end{proposition}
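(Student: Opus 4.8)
The plan is to reduce the first assertion to the quoted dichotomy for minimal cutsets and then, for the \emph{moreover} part, to rule out a third component by exhibiting a forbidden theta. First I would record the only consequence of the hypothesis that is really used: every vertex of $C$ is genuinely needed to keep the two sides apart. Fix $u_1\in V(G_1)$ and $u_2\in V(G_2)$. For any $v\in C$, the hypothesis gives neighbours $w_1\in V(G_1)$ and $w_2\in V(G_2)$ of $v$; concatenating a $u_1$--$w_1$ path inside the connected graph $G_1$, the edges $w_1v$ and $vw_2$, and a $w_2$--$u_2$ path inside $G_2$ produces a $u_1$--$u_2$ path meeting $C$ only in $v$. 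Hence no proper subset of $C$ separates $G_1$ from $G_2$, so $C$ is a minimal cutset of $G$. Since any minimal cutset of a universally signable graph is a clique or a stable set of size two~\cite{conforti1997universally}, $\InducedSub{G}{C}$ is a clique or a stable set of size two, which is the first conclusion.

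For the \emph{moreover} part, suppose $\InducedSub{G}{C}=\{a,b\}$ is a stable set of size two, so $ab\notin E(G)$, and suppose towards a contradiction that $G-C$ has a component $G_3$ distinct from $G_1$ and $G_2$. Minimality of $C$ gives that neither $\{a\}$ nor $\{b\}$ is a cutset, so $G-a$ and $G-b$ are both connected; as the components of $G-\{a,b\}$ are pairwise non-adjacent, the surviving vertex of $C$ must be adjacent to each of them, so both $a$ and $b$ have a neighbour in each of $G_1,G_2,G_3$. For $i\in\{1,2,3\}$ I would then take a shortest $a$--$b$ path $P_i$ all of whose internal vertices lie in $G_i$ (such a path exists because $a$ and $b$ each have a neighbour in the connected graph $G_i$). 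Being shortest, each $P_i$ is induced, has length at least two (since $ab\notin E(G)$), and has exactly one neighbour of $a$ and one of $b$ on it. The three paths are pairwise internally disjoint, and since their interiors lie in distinct components of $G-\{a,b\}$ there are no edges between these interiors. Thus $P_1,P_2,P_3$ form a theta with ends $a$ and $b$, contradicting that a universally signable graph contains no theta~\cite{conforti1997universally}. Hence $G-C$ has no component besides $G_1$ and $G_2$, i.e.\ $V(G)=V(G_1)\cup V(G_2)\cup C$.

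The first assertion is routine once one observes that $C$ is a minimal cutset and invokes the cited dichotomy; the main obstacle is the \emph{moreover} statement. There the two delicate points are (i) deducing from minimality that \emph{both} $a$ and $b$ reach any hypothetical extra component, without which one cannot produce three paths at all, and (ii) assembling the three connections into a genuine \emph{induced} theta, with no chords between the paths and with a unique neighbour of each end on each path. The second point is exactly why I would use shortest paths rather than arbitrary connecting paths, so that no chord can shortcut them.
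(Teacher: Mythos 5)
The paper never proves this proposition --- it is quoted from \cite{conforti1997universally}, with the remark that it ``follows from the proof'' of the minimal-cutset dichotomy --- so your argument has to stand on its own. Your first part is essentially sound: the path you route through each $v\in C$ shows that no proper subset of $C$ separates $u_1$ from $u_2$, i.e.\ $C$ is a minimal $(u_1,u_2)$-cutset, and the dichotomy in the form the paper actually uses elsewhere (compare \Cref{prp:ind2}, which is phrased for minimal $(u,v)$-cutsets) then yields the clique/stable-pair conclusion. But note the overreach in your wording: this does \emph{not} make $C$ an inclusion-minimal cutset of $G$, since a proper subset of $C$ could still be a cutset of $G$ by cutting off some \emph{other} component of $G-C$.

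That overreach becomes a genuine gap in the \emph{moreover} part, at the sentence ``Minimality of $C$ gives that neither $\{a\}$ nor $\{b\}$ is a cutset.'' What you established is only that neither singleton separates $G_1$ from $G_2$; it does not follow that $G-a$ and $G-b$ are connected. A hypothetical third component could attach to $a$ alone, in which case $b$ has no neighbour in it, no $a$--$b$ path runs through it, and your theta never materialises. Concretely, let $G$ be the four-cycle $a\,p\,b\,q$ together with a pendant vertex $x$ adjacent to $a$. This graph contains no theta, wheel, prism or pyramid, hence is universally signable; $C=\{a,b\}$ is a stable pair; $G_1=\{p\}$ and $G_2=\{q\}$ are components of $G-C$ in which every vertex of $C$ has a neighbour; yet $G-C$ has the extra component $\{x\}$, seen only by $a$, and $\{a\}$ \emph{is} a cutset of $G$. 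So under the hypothesis as you (reasonably) read it --- $G_1,G_2$ being two full components among possibly more --- the step is false and, in fact, so is the conclusion $V(G)=V(G_1)\cup V(G_2)\cup C$; the statement is only correct under a stronger reading, namely that $C$ is an inclusion-minimal cutset of $G$ (as in \cite{conforti1997universally}), or that $G_1,G_2$ are literally \emph{all} the components of $G-C$, in which case the \emph{moreover} clause is immediate. The repair is short: assuming inclusion-minimality, a component of $G-C$ with neighbours only in $\{a\}$ would make $\{a\}\subsetneq C$ a cutset, a contradiction, so \emph{every} component of $G-C$ is full; your shortest-path theta construction, which is correctly assembled (induced paths of length at least two, internally disjoint, interiors in distinct components so chordless between them), then goes through verbatim and rules out a third component. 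As it stands, though, the key premise feeding your theta is unsupported, and the hypothesis you worked from cannot supply it.
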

		
		\Cref{prp:decom} implies the following.
		
		\begin{proposition}\label{lem:decom}
			Let $G$ be a universally signable graph and $C$ be an induced cycle of $G$ with length greater than three. Let $u,v\in C$ be two non-adjacent vertices of $P$. Then for any indcued path $P$ between $u$ and $v$, we have $V(P)\subseteq V(C)$. 
		\end{proposition}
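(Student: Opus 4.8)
The plan is to argue by contradiction, producing one of the induced subgraphs forbidden in a universally signable graph (recall that such a graph contains no theta, pyramid, prism, or wheel). Suppose some induced $u$–$v$ path violates the conclusion. Among all triples $(u,v,P)$ where $u,v$ are non-adjacent vertices of $C$ and $P$ is an induced $u$–$v$ path with $V(P)\not\subseteq V(C)$, I would choose one minimizing first the number of vertices of $P$ outside $V(C)$ and then the length of $P$. Since $u,v$ are non-adjacent on the hole $C$, they split $C$ into two arcs $C_1,C_2$, each an induced path of length at least two. The first reduction is to show that this minimal $P$ is \emph{internally disjoint} from $C$: if some internal vertex $z$ of $P$ lay on $C$, split $P$ at $z$ into subpaths $P_1$ (from $u$ to $z$) and $P_2$ (from $z$ to $v$); at least one, say $P_1$, contains a vertex outside $V(C)$, so $P_1$ is not a single edge, and since an induced path between two adjacent vertices is a single edge, $u$ and $z$ must be non-adjacent on $C$. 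Then $P_1$ is a strictly smaller ``bad'' path between non-adjacent vertices of $C$, contradicting minimality.

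So write $P = u,w_1,\dots,w_t,v$ with every $w_i\notin V(C)$ and $t\geq 1$. The key observation is that for any vertex $w\notin V(C)$, the set of neighbours of $w$ on the hole $C$ is a clique: if $w$ had three neighbours on $C$ then $(C,w)$ would be a wheel, and if $w$ had two non-adjacent neighbours $x,y$ on $C$ then the two arcs of $C$ between $x$ and $y$ together with the path $x,w,y$ would form a theta. In particular each $w_i$ has at most two neighbours on $C$, and any two such neighbours are adjacent. If no $w_i$ has a neighbour in the interior of $C_1$ or of $C_2$, then the only edges between $P$ and $C$ are $w_1u$ and $w_tv$, so $C_1$, $C_2$, and $P$ are three internally disjoint induced paths of length at least two between $u$ and $v$ with no other edges among them, i.e.\ a theta, which is impossible.

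Hence some $w_i$ has a neighbour $c$ in the interior of an arc, say $C_1 = u,a_1,\dots,a_p,v$; choose such a chord with $i$ smallest. When $i<t$, I would reroute through $w_ic$ and along $C_1$ and extract an induced $u$–$v$ path inside $\InducedSub{G}{\{u,w_1,\dots,w_i\}\cup V(C_1)}$ using at most $i<t$ vertices off $C$, contradicting minimality. This forces $i=t$, so $w_t$ (already adjacent to $v$) has a neighbour in $\mathrm{int}(C_1)$; the clique property forces that neighbour to be $a_p$, giving a triangle $v\,a_p\,w_t$, and then the three internally disjoint paths from this triangle to $u$ — the arc $C_2$, the sub-arc $a_p,\dots,a_1,u$ of $C_1$, and the subpath $w_t,\dots,w_1,u$ of $P$ — form a pyramid with apex $u$, again impossible. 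The main obstacle is precisely this last step: the rerouting/chord bookkeeping has degenerate subcases when an arc is short (e.g.\ $p=1$, or a chord landing on the neighbour $a_1$ of $u$), where the naive reroute need not produce a \emph{bad} path and one must instead read off a pyramid or prism at the appropriate end. An alternative way to organise this part, avoiding some of the casework, is to note that once $P$ is internally disjoint from $C$ with all chords confined to one arc, $\{u,v\}$ becomes a stable $(m_1,m_2)$-cutset (for interior vertices $m_1\in C_1$, $m_2\in C_2$) with each of $u,v$ attached to both sides, so \Cref{prp:decom} applies and confines the internal vertices of $P$ to a single side, setting up a shorter instance for an inductive contradiction.
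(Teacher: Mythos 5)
Your overall architecture is sound and most steps verify: the lexicographic minimal counterexample, the splitting reduction (your observation that an induced path with adjacent endpoints has length one is exactly right), the clique-neighbourhood lemma via wheel/theta, the theta in the chord-free case, and the terminal pyramid at $i=t$ (which, note, survives even when $p=1$, since $t\geq 2$ is already forced -- $t=1$ would make $\{u,v\}\subseteq N(w_1)$, violating the clique property -- so two of the three apex-paths have length at least two in any case). For comparison, the paper gives no argument at all for this proposition: it merely asserts that it follows from \Cref{prp:decom}, so your direct Truemper-configuration route is necessarily different and is, in substance, the proof the paper omits. However, there are two genuine gaps. First, your generic reroute for $i<t$ does not, as stated, contradict minimality: an induced $u$--$v$ path ``extracted'' from $\InducedSub{G}{\{u,w_1,\dots,w_i\}\cup V(C_1)}$ may be $C_1$ itself, with zero vertices off $C$, and then it is not a counterexample. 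The repair is to exhibit the path explicitly: with $c=a_j$ and $j$ chosen \emph{maximal} among the neighbours of $w_i$ in the interior of $C_1$, the path $u\,w_1\cdots w_i\,a_j\,a_{j+1}\cdots a_p\,v$ is induced (minimality of $i$ excludes chords from $w_k$, $k<i$; maximality of $j$ plus the clique property excludes chords from $w_i$) and carries between $1$ and $i<t$ off-$C$ vertices -- but only when $j\geq 2$. When $j=1$ the edge $ua_1$ is a chord of this path and the construction collapses; so the degenerate case you flag is not peripheral bookkeeping but precisely where the generic step fails, and ``read off a pyramid or prism'' is not automatic there: for $k>i$ the vertices $w_k$ may still carry arbitrary chords into both arc interiors, so the three candidate paths of the would-be pyramid on the triangle $u\,w_1\,a_1$ (when $i=1$) need not be chord-free, and one needs a further extremal or inductive layer to close it.

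Second, your fallback via \Cref{prp:decom} does not work as sketched. In the configuration where $P$ is internally disjoint from $C$ and all chords run into one arc, $\{u,v\}$ is indeed a stable minimal cutset of $G'=\InducedSub{G}{V(C)\cup V(P)}$, but $G'-\{u,v\}$ then has exactly two components (the interior of $C_2$, and the interior of $C_1$ together with the interior of $P$), so the conclusion of \Cref{prp:decom}, namely $V(G')=V(G_1)\cup V(G_2)\cup\{u,v\}$, holds vacuously: it confines nothing, because the interior of $P$ already sits on one side. \Cref{prp:decom} produces a contradiction only when the interior of $P$ forms a third component, i.e.\ exactly the chord-free case you had already dispatched with a theta. (The same observation shows that the paper's own one-line derivation of this proposition from \Cref{prp:decom} is, at best, elliptical -- the chord analysis you are carrying out is genuinely needed -- but as it stands your write-up asserts rather than completes it.)
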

		
		\begin{proposition}\label{prp:tree-width}
			The tree-width of a triangle-free universally signable graph is at most two.
		\end{proposition}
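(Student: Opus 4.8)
The plan is to argue by induction on $|V(G)|$, reducing to the $2$-connected case and then peeling off a small cutset. I would first record the classical equivalence that a graph has tree-width at most two if and only if it is $K_4$-minor-free, and (since $\Delta(K_4)=3$) if and only if it contains no subdivision of $K_4$; this is the form I ultimately want to contradict. For the induction, if $G$ has at most three vertices the bound is trivial, and if $G$ is disconnected or has a cut-vertex I would pass to its blocks: each block is an induced subgraph of $G$, hence again triangle-free and universally signable, so by the induction hypothesis each has tree-width at most two, and assembling the block tree-decompositions along cut-vertices preserves width. This leaves the case where $G$ is $2$-connected with at least four vertices.

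In that case $G$ is triangle-free and therefore not complete, so it has a minimal cutset $C$. Since $G$ is universally signable, $C$ is either a clique or a stable set of size two, and triangle-freeness forces $|C|\le 2$; as $G$ is $2$-connected, $|C|=2$, say $C=\{a,b\}$. If $ab\in E(G)$, I would peel off one component $D$ of $G-C$ and write $G$ as the clique-sum of $\InducedSub{G}{V(D)\cup C}$ and $\InducedSub{G}{V(G)\setminus V(D)}$ along the edge $ab$; both pieces are strictly smaller induced subgraphs (hence triangle-free and universally signable), so the induction hypothesis together with the standard clique-sum gluing lemma gives tree-width at most two. If instead $ab\notin E(G)$, then since $C$ is a minimal cutset, \Cref{prp:decom} applies and separates $G$ into exactly two components $G_1,G_2$ with $V(G)=V(G_1)\cup V(G_2)\cup C$, and I would set $G_i'=\InducedSub{G}{V(G_i)\cup C}$ and attempt to glue along $C$.

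The main obstacle is precisely this last (stable) case: $C=\{a,b\}$ is not a clique, so there is no reason for a single bag of a width-two tree-decomposition of $G_i'$ to contain both $a$ and $b$, and to perform a clique-sum I must add a virtual edge $ab$, which may create a triangle and thereby break the ``triangle-free'' hypothesis I am inducting on. I would resolve this by not re-invoking the class hypothesis on $G_i'+ab$, but instead bounding its tree-width directly: since $G_i'$ is triangle-free and universally signable, induction gives tree-width at most two, so a $K_4$ minor in $G_i'+ab$ would have to use the new edge, yielding two internally disjoint $a$--$b$ paths in $G_i'$ together with a further linkage between them. Using that $a$ and $b$ each have a neighbour in the opposite side $G_{3-i}$ (so that a third, internally disjoint $a$--$b$ path runs through $G_{3-i}$), I would obtain three internally disjoint $a$--$b$ paths in $G$; passing to a triple of minimum total length and invoking triangle-freeness (which excludes the prism, pyramid, and wheel degeneracies, as well as chords, in the same spirit as \Cref{lem:decom}) would expose an induced theta in $G$, contradicting that $G$ is universally signable. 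Hence $G_i'+ab$ has tree-width at most two, and the clique-sum along $\{a,b\}$ gives the same bound for $G$. The delicate step I expect to demand the most care is exactly this passage from a possibly non-induced three-path configuration to a genuine induced theta, where the minimality of the chosen paths and triangle-freeness must be combined carefully.
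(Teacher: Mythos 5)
The paper itself states \Cref{prp:tree-width} without any proof (it is treated as a known consequence of the structure theory of universally signable graphs from \cite{conforti1997universally}), so your attempt can only be judged on its own merits — and it has a genuine gap, located exactly where you flagged ``the delicate step''. Your skeleton (reduction to blocks, a minimal cutset being a clique or a stable pair by universal signability, hence of size two by triangle-freeness, and the clique-sum in the case $ab\in E(G)$) is sound. But the stable-cutset case does not go through as sketched, for two reasons. First, deleting the virtual edge $ab$ from a $K_4$-subdivision of $G_i'+ab$ does not leave ``two internally disjoint $a$--$b$ paths plus a linkage'': it leaves a subdivision of $K_4$ minus an edge with two (possibly nontrivial) tails attached at $a$ and $b$; the two $a$--$b$ walks this yields share those tails, and together with the third path through $G_{3-i}$ what you actually hold is a full $K_4$-subdivision of $G$ — the ``further linkage'' is precisely what you may not discard. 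Second, and decisively, the minimization step is false as stated: a triple of internally disjoint $a$--$b$ paths of minimum total length need not be chord-free, and no exchange argument repairs it. Take vertices $a,b,1,\dots,6$ with edges $a1,a3,a5,12,34,56,2b,4b,6b,14$. This graph is triangle-free, the \emph{unique} triple of internally disjoint $a$--$b$ paths is $a12b$, $a34b$, $a56b$, and it carries the cross chord $14$, so no induced theta with ends $a,b$ exists; the graph does contain an induced theta, but with ends $a$ and $4$ (paths $a\,3\,4$, $a\,1\,4$, $a\,5\,6\,b\,4$). So the forbidden configuration must be extracted from the minimal $K_4$-subdivision with \emph{arbitrary} ends, not from $a$--$b$ path triples. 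Moreover, your parenthetical claim that triangle-freeness excludes the wheel outcome is wrong: a $C_6$ together with a center adjacent to three pairwise nonadjacent rim vertices is a triangle-free wheel, so the case analysis must genuinely allow a wheel and kill it using wheel-freeness of universally signable graphs. The lemma you actually need — every triangle-free graph containing a $K_4$-subdivision contains an induced theta or an induced wheel — is true, but it \emph{is} the whole content of the proposition, so your induction defers the entire difficulty to a step you only gesture at.

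The clean way to close the gap is to avoid stable cutsets altogether: by the decomposition theorem of Conforti, Cornu\'ejols, Kapoor and Vu\v{s}kovi\'c~\cite{conforti1997universally}, every universally signable graph is a hole, or triangulated, or has a clique cutset. Decomposing along clique cutsets (which have size at most two here, by triangle-freeness) preserves the tree-width bound, and the atoms are holes (tree-width two) or triangle-free chordal graphs, i.e.\ forests (tree-width at most one). Alternatively, if you want a self-contained argument in your framework, you must prove the extraction lemma directly: take a $K_4$-subdivision of $G$ with the minimum number of vertices and run the standard Truemper-configuration case analysis on edges between its paths, producing an induced theta or wheel; only then does your ``glue along $\{a,b\}$ after adding the virtual edge'' step become valid.
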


		\subsection{Paths, distances and layering partition}\label{sec:layer}
		
		Let $G$ be a graph. The length of a path $P$ is the number of edges in $P$. For two vertices $u,v\in V(G)$, a $(u,v)$-path (resp. $(u,v)$-induced path) is a path (resp. an induced path) between $u$ and $v$. An $(u,v)$-isometric path is an $(u,v)$-path with smallest possible length. The \emph{distance} between two vertices $u,v\in V(G)$, denoted by $\distance{G}{u}{v}$, is the length of an $(u,v)$-isometric path in $G$. For a set $S$ and a vertex $u$, the distance between $u$ and $S$, denoted by $\distance{G}{u}{S}=\min\{\distance{G}{u}{v}\colon v\in S\}$.	When $G$ is clear from the context, we use the notations $\dist{u}{v}$ and $\dist{u}{S}$.
		
		\begin{definition}[\cite{georgakopoulos2023graph}] \label{def:quasi}
			\sloppy An \emph{$(M,A)$-quasi-isometry} between graphs $G$ and $H$ is a map $f\colon V(G)\rightarrow V(H)$ such that the following holds for fixed constants $M\geq 1, A\geq 0$, \begin{enumerate*}[label=(\alph*)]
				\item $M^{-1}\dist{x}{y}-A  \leq \dist{f(x)}{f(y)} \leq M\dist{x}{y}+A$ for every $x,y\in V(G)$; and \item for every $z\in V(H)$ there is $x\in V(G)$ such that $\dist{z}{f(x)}\leq A$. 
			\end{enumerate*}
			The graphs $G$ and $H$ are \emph{quasi-isometric} if there exists a quasi-isometric map $f$ between $V(G)$ and $V(H)$. 
		\end{definition}
		
		We highlight the definition of quasi-isometry with additive distortion. 
		
		\begin{definition}\label{def:add}
			A class $\mathcal{G}$ of graphs admit a \emph{quasi-isometry with additive distortion} to a class $\mathcal{H}$ if there exists a constant $A$  (depending only on $\mathcal{G}$ and $\mathcal{H}$) such that any $G\in \mathcal{G}$ admit a $(1,A)$-quasi-isometry to a graph in $\mathcal{H}$.
		\end{definition}
		
		
		For an integer $k\geq 0, r\in V(G)$, let $\Ball{k}{r}$ denote the \emph{ball of radius $k$} centered at $r$, i.e., $\Ball{k}{r} \coloneq \{u\in G\colon \dist{r}{u}\leq k \}$. Similarly let $\Sphere{k}{r}$ denote the $k^{th}$ neighborhood of $r$, i.e.,  $\Sphere{k}{r} \coloneq \{u\in G\colon \dist{r}{u}= k \}$. For a set of vertices $S\subseteq V(G)$, $\Ball{k}{S}\coloneq\displaystyle\bigcup\limits_{ v\in S} \Ball{k}{v}$, and $\Sphere{k}{S}\coloneq\displaystyle\bigcup\limits_{ v\in S} \Sphere{k}{v}$. When $k=1$, we shall write $\Sphere{1}{r}$ and $\Sphere{1}{S}$ simply as  $N(r)$ and $N(S)$.  
		
		
		Let $G$ be a graph and $r$ be vertex of $G$. For $k\geq 2$ and two vertices $u,v\in \Sphere{k}{r}$, an $(u,v)$-induced path $P$ is an \emph{$(u,v)$-upper path w.r.t $r$} if $V\left(P - \{u,v\}\right)\subseteq \Ball{k-1}{r}$. Since $\Ball{k-1}{r}\cup \{u,v\}$ induces a connected subgraph of $G$, a $(u,v)$-upper path w.r.t $r$ always exists.

		
		We recall the notion of \emph{layering partition}, introduced in \cite{brandstadt1999distance, chepoi2000note}. Let $G$ be a graph and $r$ be any vertex of $G$. A subset $S \subseteq V(G)$ is a \emph{cluster w.r.t $r$} if $S$ is the maximal subset such that (i) there exists an integer $k\geq 1$ with $S\subseteq \Sphere{k}{r}$; (ii) any two vertices $x,y\in S$ lie in the same connected component in $G-\Ball{k-1}{r}$. Let $G$ be a graph, $r$ be a vertex of $G$ and $S$ be a cluster such that $S\subseteq \Sphere{k}{r}$ for some integer $k\geq 1$. The \emph{parent set} of $S$, denoted by $\ParentSet{r}{S}$, is the set $\left\{ v\in \Sphere{k-1}{r} \colon uv\in E(G), u\in S\right\}$. Observe that  $\ParentSet{r}{S}$ is a subset of some cluster w.r.t $r$.
		
		Let $G$ be a graph and $r$ be a vertex of $G$. Two clusters $C, C'$ w.r.t $r$ are  \emph{adjacent} if there is an edge in $G$ between some $x\in C$ and $y\in C'$. For a graph $G$ and a vertex $r$, let $\clusterGraphv{r}{G}$ be the graph whose vertices are the maximal clusters w.r.t $r$, and two vertices are adjacent if the corresponding clusters are adjacent. 
		
		\begin{proposition}[\cite{chepoi2000note, brandstadt1999distance}]
			For any graph $G$ and a vertex $r\in V(G)$,  $\clusterGraphv{r}{G}$ is a tree.
		\end{proposition}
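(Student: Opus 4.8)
The plan is to show that $\clusterGraphv{r}{G}$ is connected and has exactly $|V(\clusterGraphv{r}{G})|-1$ edges, which for a connected graph is equivalent to being a tree. Throughout I would assume $G$ is connected (otherwise one restricts to the component of $r$) and treat the root cluster $R=\{r\}$ as the unique cluster at level $0$, so that every cluster is contained in $\Sphere{k}{r}$ for a unique integer $k\geq 0$, which I call its \emph{level}.

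First I would pin down the adjacency structure. Since adjacent vertices of $G$ differ in their distance to $r$ by at most one, any edge of $G$ joins $\Sphere{k}{r}$ to $\Sphere{k'}{r}$ with $|k-k'|\leq 1$; hence every edge of $\clusterGraphv{r}{G}$ joins clusters whose levels differ by at most one. Moreover, two distinct clusters at the same level $k$ cannot be adjacent: an edge $xy$ with $x,y\in\Sphere{k}{r}$ lies entirely outside $\Ball{k-1}{r}$, so $x$ and $y$ belong to the same connected component of $G-\Ball{k-1}{r}$ and therefore, by maximality, to the same cluster. Consequently every edge of $\clusterGraphv{r}{G}$ joins clusters of consecutive levels.

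The crucial step is to show that each cluster $S$ at level $k\geq 1$ has a \emph{unique} neighbour at level $k-1$. Every level-$(k-1)$ vertex adjacent to $S$ lies in $\ParentSet{r}{S}$, so it suffices to prove that $\ParentSet{r}{S}$ is contained in a single cluster, which is the observation recorded just after the definition. This set is nonempty because $G$ is connected and the penultimate vertex of any shortest $r$-to-$u$ path with $u\in S$ lies in $\ParentSet{r}{S}$. Given $u',v'\in\ParentSet{r}{S}$ with respective neighbours $u,v\in S$, choose a $u$-$v$ path $Q$ inside $G-\Ball{k-1}{r}$, which exists since $u,v$ lie in a common component there; then the concatenation $u' u\,Q\,v v'$ avoids $\Ball{k-2}{r}$, so $u'$ and $v'$ lie in the same component of $G-\Ball{k-2}{r}$ and hence in a single level-$(k-1)$ cluster. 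Calling this cluster the \emph{parent} $p(S)$ of $S$, we conclude that $S$ is adjacent to exactly one cluster of level $k-1$.

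From these facts the statement follows quickly. For connectedness, iterating $p$ from any cluster strictly decreases the level, so after finitely many steps it reaches $R$, showing every cluster is joined to $R$ in $\clusterGraphv{r}{G}$. For acyclicity, I would argue that if $\clusterGraphv{r}{G}$ contained a cycle, then a cluster $S$ of maximum level on that cycle would have both of its cycle-neighbours one level lower (neither higher, by maximality, nor equal, by the no-same-level fact), hence both equal to $p(S)$; this forces a repeated edge, contradicting that $\clusterGraphv{r}{G}$ is simple. A connected acyclic graph is a tree. The one point requiring genuine care is the uniqueness of the parent, namely that $\ParentSet{r}{S}$ never splits across two clusters; the remaining steps are routine bookkeeping with the layering levels.
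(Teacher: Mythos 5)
Your proof is correct, but there is nothing in the paper to compare it against: the proposition is imported from the cited works of Brandst\"adt et al.\ and Chepoi--Dragan and is stated without proof, so your write-up is a self-contained reconstruction of the standard argument. Your pivotal step --- that for a cluster $S$ at level $k\geq 1$ the set $\ParentSet{r}{S}$ is contained in a single cluster at level $k-1$, proved by routing $u'u\,Q\,vv'$ through a $(u,v)$-path $Q$ in $G-\Ball{k-1}{r}$ and observing it avoids $\Ball{k-2}{r}$ --- is precisely the fact the paper itself records right after defining parent sets (``Observe that $\ParentSet{r}{S}$ is a subset of some cluster w.r.t $r$''), and the paper's later constructions (the parent--child structure on $\clusterGraphv{r}{G}$, \Cref{lem:min-cut-set}) implicitly rely on exactly this unique-parent property, so you have identified the right load-bearing lemma. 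Your two preliminary facts (edges of $\clusterGraphv{r}{G}$ join consecutive levels; two distinct same-level clusters are never adjacent, by maximality of clusters) are argued correctly, and connectedness via iterating the parent map is fine. One cosmetic point in the acyclicity step: when both cycle-neighbours of a maximum-level cluster $S$ are forced to be the unique parent $p(S)$, the cleaner contradiction is that the two neighbours of $S$ on a cycle must be \emph{distinct vertices}, rather than your phrasing via a repeated edge; both formulations close the argument, since a cycle in a simple graph has length at least three. Also note the paper's definition of cluster nominally requires $k\geq 1$, so your explicit convention of installing $R=\{r\}$ as the unique level-$0$ cluster is needed (and matches the paper's usage of the root cluster) for the induction to bottom out.
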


		Below, we show that the parent set of every cluster is a minimal $(r,v)$-cutset where $v$ is a vertex of the cluster. 
		
		\begin{lemma}\label{lem:min-cut-set}
			Let $G$ be a graph, $r$ be a vertex, $S$ be a cluster w.r.t $r$ and $u\in S$ such that $\ParentSet{r}{S}\neq \{r\}$. Then  $\ParentSet{r}{S}$ is a minimal $(r,u)$-cutset of $G$.
		\end{lemma}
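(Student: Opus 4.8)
The plan is to pin down the geometric meaning of $\ParentSet{r}{S}$ and then check the two defining properties of a minimal cutset in turn. First I would record a consequence of the hypothesis: since $\ParentSet{r}{S}\subseteq \Sphere{k-1}{r}$ and $\ParentSet{r}{S}\neq\{r\}$, we must have $k\geq 2$, whence $r\in \Ball{k-1}{r}$ but $r\notin \Sphere{k-1}{r}\supseteq \ParentSet{r}{S}$, so $r$ is never among the deleted vertices; likewise $u\in S\subseteq \Sphere{k}{r}$ is not deleted. Let $K$ be the connected component of $G-\Ball{k-1}{r}$ containing $S$, which is well defined since all of $S$ (and in particular $u$) lies in a single such component by the definition of a cluster.

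The crucial step is a boundary claim: every edge of $G$ with one endpoint in $K$ and the other in $\Ball{k-1}{r}$ has its $\Ball{k-1}{r}$-endpoint in $\ParentSet{r}{S}$ (and its $K$-endpoint in $S$). This reduces to a short distance estimate: if $x\in K$, so $\distance{G}{r}{x}\geq k$, is adjacent to some $w\in \Ball{k-1}{r}$, then $k\leq \distance{G}{r}{x}\leq \distance{G}{r}{w}+1\leq k$, forcing $\distance{G}{r}{x}=k$ and $\distance{G}{r}{w}=k-1$; hence $x\in \Sphere{k}{r}\cap K=S$ and $w\in \ParentSet{r}{S}$. Given this, deleting $\ParentSet{r}{S}$ removes every edge joining $K$ to the rest of $G$ (edges of $G-\Ball{k-1}{r}$ cannot leave $K$, by maximality of the component), so $u\in K$ is disconnected from $r\notin K$. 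Thus $\ParentSet{r}{S}$ is an $(r,u)$-cutset.

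For minimality it suffices to show that $\ParentSet{r}{S}\setminus\{w\}$ is not an $(r,u)$-cutset for every $w\in \ParentSet{r}{S}$: any proper subset of $\ParentSet{r}{S}$ is contained in some $\ParentSet{r}{S}\setminus\{w\}$, and enlarging a cutset within $\ParentSet{r}{S}$ (which contains neither $r$ nor $u$) preserves the cutset property. Fixing $w$, I would build an $(r,u)$-walk in $G-(\ParentSet{r}{S}\setminus\{w\})$ by concatenating three pieces: a shortest $(r,w)$-path, whose vertices sit in $\Ball{k-2}{r}$ except for the endpoint $w$ and therefore meet $\Sphere{k-1}{r}\supseteq \ParentSet{r}{S}$ only at $w$; a single edge $wx$ to a neighbour $x\in S$ of $w$, which exists because $w\in \ParentSet{r}{S}$; and a path from $x$ to $u$ inside $K$, which exists as $x,u\in S$ share the component $K$ and which avoids $\Ball{k-1}{r}\supseteq \ParentSet{r}{S}$ altogether. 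These three pieces are vertex-compatible since the first lies in $\Ball{k-1}{r}$ and the last in its complement, so their concatenation keeps $r$ and $u$ connected after deleting $\ParentSet{r}{S}\setminus\{w\}$.

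I expect the boundary claim of the second paragraph to be the one genuinely load-bearing step, as it is what guarantees that no edge can ``jump over'' $\ParentSet{r}{S}$ to link $K$ directly to $\Ball{k-2}{r}$; the remaining verifications (the strictly increasing distances along a shortest path, and the compatibility of the three walk segments) are routine bookkeeping with distances and components.
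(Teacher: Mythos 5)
Your proposal is correct and follows essentially the same route as the paper: the cutset property via the observation that every $(r,u)$-path must cross $\ParentSet{r}{S}$ (your boundary claim makes explicit what the paper asserts in one line), and minimality via a surviving parent vertex $w$ bridging a shortest $(r,w)$-path inside $\Ball{k-1}{r}$, an edge into $S$, and a path within the component of $G-\Ball{k-1}{r}$. If anything, your version is slightly tighter: you prove exactly the $(r,u)$-separation statement required, whereas the paper's proof argues that $G-Y$ is entirely connected for every proper subset $Y$ (a stronger claim than minimality of an $(r,u)$-cutset needs, and one whose blanket assertion that every vertex of $Z_2$ reaches $r$ in $G-Y$ is not justified for vertices outside the $r$-to-$u$ route).
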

		\begin{proof}
			Let $X\coloneq \ParentSet{r}{S}$ and $S\subseteq \Sphere{k}{r}$ with $k\geq 2$.	For any $u\in S$, an $(r,s)$-path in $G$ must contain a vertex of  $X$, and therefore, $r$ and $u$ lie in different connected components in  $G-X$. Hence, $X$ is a cutset. 
			
			Let $Y\subset X$, $H=G-Y$, $u\in X\setminus Y$, and $v\in S$ with $uv\in E(H)$. Let $Z_1$ be the set of vertices that lie in $S$ or in some cluster which is a descendent of $S$ in $\clusterGraphv{r}{G}$. Observe that $Z_1\subset V(H)$ and in particular, $v\in Z_1$. Since any vertex $w\in Z_1$ is connected to $v$ by a path in  $\InducedSub{H}{Z_1}$, we have that $\InducedSub{H}{Z_1}$ is connected. Let $Z_2 = V(G-Z_1-Y)$. By definition, $V(H)=Z_1\cup Z_2, r\in Z_2$ and $u\in Z_2$. Observe that for any vertex $w\in Z_2$, there is a $(r,w)$-path in $H$. Hence $\InducedSub{H}{Z_2}$ is connected.  As $uv\in E(H)$, $H$ is connected. 
		\end{proof}
		
		\Cref{lem:min-cut-set} and \Cref{prp:ind2} imply the following properties of clusters of $K_{2,3}$-induced minor-free graphs.
		
		\begin{lemma}\label{obs:K23cut}
			Let $G$ be a $K_{2,3}$-induced minor-free graphs, $r$ be a vertex, $S$ be a cluster w.r.t $r$, and $F\coloneq \InducedSub{G}{\ParentSet{r}{S}}$. Then all of the following holds. \begin{enumerate*}[label=(\alph*)]
				\item\label{lem:twocomp} $F$ has at most two connected components,
				\item\label{it:cutk23-b} If $F$ is connected then it has diameter at most three,
				\item\label{it:cutk23-c}  If $F$ has two connected components, then each connected component is a clique.
			\end{enumerate*} 
		\end{lemma}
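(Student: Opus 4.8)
The plan is to reduce all three parts to a single structural fact: the independence number of $F$ is at most two. First I would dispose of the degenerate case. If $\ParentSet{r}{S}=\{r\}$ (which happens precisely when $S\subseteq \Sphere{1}{r}$), then $F$ is a single vertex and all three claims hold trivially. Otherwise $\ParentSet{r}{S}\neq\{r\}$, so by \Cref{lem:min-cut-set} the set $X\coloneq\ParentSet{r}{S}$ is a minimal $(r,u)$-cutset of $G$ for any $u\in S$. Since $G$ is $K_{2,3}$-induced minor-free, \Cref{prp:ind2} then guarantees that every stable set of $F=\InducedSub{G}{X}$ has size at most two.

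With this independence bound in hand, each part follows from a short ``forbidden three-vertex stable set'' argument. For \ref{lem:twocomp}, I would pick one vertex from each of three distinct connected components of $F$; these three vertices are pairwise non-adjacent, i.e.\ they form a stable set of size three, contradicting the bound. Hence $F$ has at most two components. For \ref{it:cutk23-c}, suppose $F$ has two components $F_1,F_2$ but $F_1$ is not a clique; choosing non-adjacent $a,b\in F_1$ and any $c\in F_2$, the vertices $a,b$ are non-adjacent by choice while each is non-adjacent to $c$ because they lie in different components, so $\{a,b,c\}$ is again a stable set of size three, a contradiction. Thus each component is a clique.

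For \ref{it:cutk23-b}, suppose $F$ is connected but has diameter at least four, and let $v_0v_1v_2v_3v_4$ be the initial segment of a shortest path in $F$ realizing a pair of vertices at distance at least four. In a shortest path, any two vertices whose indices differ by at least two are non-adjacent (otherwise the path could be shortcut), so $\distance{F}{v_0}{v_2}$, $\distance{F}{v_2}{v_4}$, and $\distance{F}{v_0}{v_4}$ are all at least two, and $\{v_0,v_2,v_4\}$ is a stable set of size three, contradicting the bound. Therefore the diameter of $F$ is at most three.

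The argument is essentially mechanical once the bound $\alpha(F)\le 2$ is established, and so I do not anticipate a substantial obstacle. The only points requiring care are the degenerate case $\ParentSet{r}{S}=\{r\}$, where \Cref{lem:min-cut-set} does not apply and the statement must be verified directly, and the observation that a shortest path of length at least four genuinely contains an independent triple.
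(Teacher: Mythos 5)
Your proof is correct and is exactly the derivation the paper intends: the paper states the lemma without proof, noting only that it follows from \Cref{lem:min-cut-set} and \Cref{prp:ind2}, and your argument—reducing all three parts to the stable-set bound $\alpha(F)\leq 2$ obtained from those two results—fills in precisely the omitted details. Your handling of the degenerate case $\ParentSet{r}{S}=\{r\}$ and the independent triple $\{v_0,v_2,v_4\}$ on a shortest path of length four are both sound.
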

		
		For technical reasons, we will need the following lemma.
		\begin{lemma}\label{lem:cut-K23}
			Let $G$ be a $K_{2,3}$-induced minor-free graph, $r$ be a vertex, and $S$ be a cluster w.r.t $r$, $S_1,S_2$ be two children of $S$ in $\clusterGraphv{r}{G}$ such that for $i\in \{1,2\}$, $\InducedSub{G}{\ParentSet{r}{S_i}}$ contains two connected components $C_i$ and $D_i$. Let $\mathcal{X}=\{(Y,Z) \in \{C_1,D_1\}\times \{C_2,D_2\}\colon V(Y)\cap V(Z)\neq \emptyset\}$. Then $\dmod{\mathcal{X}}\leq 1$. 
		\end{lemma}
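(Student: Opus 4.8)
The plan is to argue by contradiction: assume $\dmod{\mathcal{X}}\geq 2$ and derive a forbidden structure. Throughout, write $S\subseteq \Sphere{k}{r}$, so that $S_1,S_2\subseteq \Sphere{k+1}{r}$ and each parent set $\ParentSet{r}{S_i}$ is a subset of $S\subseteq \Sphere{k}{r}$. Since the hypothesis gives that $\InducedSub{G}{\ParentSet{r}{S_i}}$ has two connected components for both $i$, \Cref{obs:K23cut} tells me that each of $C_1,D_1,C_2,D_2$ is a clique, and that $C_i,D_i$ are vertex-disjoint with no edge of $G$ between them for each $i\in\set{1,2}$. The two selected elements of $\mathcal{X}$ fall into two kinds, which I treat separately, covering all $\binom{4}{2}=6$ ways of choosing two distinct pairs.

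First I would dispose of the case where the two chosen pairs share a coordinate, say both have $C_1$ in the first coordinate: $(C_1,C_2),(C_1,D_2)\in\mathcal{X}$. Pick $x\in C_1\cap C_2$ and $x'\in C_1\cap D_2$. As $C_2\cap D_2=\emptyset$ we have $x\neq x'$, so $xx'\in E(G)$ because $C_1$ is a clique; but $x\in C_2$ and $x'\in D_2$ lie in distinct components of $\InducedSub{G}{\ParentSet{r}{S_2}}$, so $xx'\notin E(G)$ --- a contradiction. The three remaining ``shared coordinate'' configurations (sharing $D_1$, or sharing $C_2$, or sharing $D_2$) are symmetric, using in each case that the shared component is a clique while the other two are non-adjacent components.

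The hard part is the ``diagonal'' case, where the two chosen pairs use all four components, say $(C_1,C_2),(D_1,D_2)\in\mathcal{X}$ (the alternative $(C_1,D_2),(D_1,C_2)$ is identical after renaming). Here I would exhibit an induced minor isomorphic to $K_{2,3}$. Choose $x\in C_1\cap C_2$ and $y\in D_1\cap D_2$; then $x\neq y$ and $xy\notin E(G)$, since $x\in C_1$ and $y\in D_1$ lie in distinct components of $\InducedSub{G}{\ParentSet{r}{S_1}}$. I claim the five branch sets $\set{x},\set{y}$ (the part of size two) and $S_1,S_2,\Ball{k-1}{r}$ (the part of size three) form the branch sets of a $K_{2,3}$ induced minor. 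They are pairwise disjoint and connected: $S_1,S_2$ are connected by the definition of clusters, and $\Ball{k-1}{r}$ is connected; moreover $\Ball{k-1}{r}$ is nonempty since $k\geq 1$ (indeed $\ParentSet{r}{S_1}$ has two components, hence at least two vertices, so $|S|\geq 2$ and $S\neq\set{r}$), and so $r\in\Ball{k-1}{r}$. The vertices $x,y\in\Sphere{k}{r}$ are disjoint from $\Sphere{k+1}{r}\supseteq S_1\cup S_2$ and from $\Ball{k-1}{r}$.

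Finally I would verify the six adjacency and non-adjacency conditions. Both $x$ and $y$ are adjacent to $S_1$ and to $S_2$ (they lie in the corresponding parent sets), and each has a neighbour in $\Sphere{k-1}{r}\subseteq\Ball{k-1}{r}$; this supplies all six required edges. For the non-edges: $xy\notin E(G)$ as noted; there is no edge between $S_1$ and $S_2$, because they are distinct clusters contained in $\Sphere{k+1}{r}$ and hence lie in different connected components of $G-\Ball{k}{r}$; and there is no edge between $S_i$ and $\Ball{k-1}{r}$, since a vertex of $\Sphere{k+1}{r}$ cannot be adjacent to a vertex at distance at most $k-1$ from $r$. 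Consequently, contracting each branch set and deleting every other vertex yields exactly $K_{2,3}$, contradicting that $G$ is $K_{2,3}$-induced minor-free. Having reached a contradiction in all six cases, I conclude $\dmod{\mathcal{X}}\leq 1$. The only step requiring care is this diagonal case, and specifically the bookkeeping of the three non-adjacencies on the size-three side, each of which follows cleanly from the layering structure.
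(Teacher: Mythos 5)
Your first case (two pairs sharing a component) is exactly the paper's first step, and your diagonal case takes a genuinely different route: the paper picks $c\in C_1\cap C_2$, $d\in D_1\cap D_2$, joins them by two paths dropping into the children (via neighbours $c_i,d_i\in S_i$ and $(c_i,d_i)$-induced paths in $G-\Ball{k}{r}$) plus a $(c,d)$-upper path, and then invokes the Truemper characterization (\Cref{prp:induced-truem}) to find a theta. You instead build an explicit $K_{2,3}$ induced-minor model with branch sets $\set{x},\set{y},S_1,S_2,\Ball{k-1}{r}$, which goes straight to the definition of the forbidden induced minor and avoids both the Truemper machinery and the somewhat delicate verification that the union of three paths really induces a theta. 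Your adjacency and non-adjacency bookkeeping (including $xy\notin E(G)$, no $S_1$--$S_2$ edges because distinct clusters at the same level lie in different components of $G-\Ball{k}{r}$, and no $S_i$--$\Ball{k-1}{r}$ edges by the distance gap) is correct, as is the argument that $k\geq 1$.

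There is, however, one genuinely false step: ``$S_1,S_2$ are connected by the definition of clusters.'' The definition only requires that any two vertices of a level-$(k+1)$ cluster lie in the same connected component of $G-\Ball{k}{r}$; the connecting paths may pass through deeper layers, so $\InducedSub{G}{S_i}$ need not be connected. (Take a $6$-cycle rooted at a vertex $r$: the two vertices at distance $2$ form a single cluster but are non-adjacent.) This matters for your model: the neighbour of $x$ in $S_1$ and the neighbour of $y$ in $S_1$ may lie in different components of $\InducedSub{G}{S_1}$, so contracting $S_1$ does not yield a single branch vertex and the minor model collapses as written. The repair is local: replace each $S_i$ by the full connected component $Z_i$ of $G-\Ball{k}{r}$ containing it --- equivalently, $S_i$ together with all its descendant clusters, exactly the set $Z_1$ used in the paper's proof of \Cref{lem:min-cut-set}. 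Each $Z_i$ is connected, the $Z_i$ are disjoint with no edges between them (they are distinct components of $G-\Ball{k}{r}$), every vertex of $Z_i$ is at distance at least $k+1$ from $r$ so the non-adjacency to $\Ball{k-1}{r}$ and the disjointness from $\set{x,y}\subseteq\Sphere{k}{r}$ survive, and $x,y$ retain their neighbours in $S_i\subseteq Z_i$. With that substitution your proof is correct.
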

		\begin{proof}
			If there  exists a connected component $Y\in \{C_1,D_1\}$ containing vertices from both $C_2$ and $D_2$, then due to \Cref{obs:K23cut}\ref{it:cutk23-c}, there will be an edge in $\InducedSub{G}{\ParentSet{r}{S_2}}$ between vertices of $C_2$ and $D_2$, which is a contradiction.  Now assume $|\mathcal{X}|\geq 2$. Without loss of generality, let $C_1$ contains a vertex $c$ from $C_2$. Then, $D_1$ must contain a vertex $d$ from $D_2$. For $i\in \{1,2\}$, let $c_i\in N(c)\cap S_i$ and $d_i\in N(d)\cap S_i$. Let $k=\dist{r}{S}$. For $i\in \{1,2\}$, let $P_i$ denote a $(c_i,d_i)$-induced path in $G-\Ball{r}{k}$. (Since $c_i,d_i$ lie in the same cluster in $G$, $P_i$ exists). Let $P$ be an $(c,d)$-upper path w.r.t $r$. the graph induced by the vertices of $P,P_1,P_2$, contains a theta, which contradicts \Cref{prp:induced-truem}.
		\end{proof}

		\subsection{Strong isometric path complexity}
		
		Given a graph $G$ and a vertex $v$ of $G$, a set $S$ of isometric paths of $G$ is \emph{$v$-rooted} if $v$ is one of the end-vertices of all the isometric paths in $S$. A path $P$ is \emph{covered} by a set $\mathcal{Q}$ of paths if $V(P)\subseteq \displaystyle\bigcup\limits_{Q\in \mathcal{Q}} V(Q)$. 
		
		\begin{definition}[\cite{c2023isometric,chakraborty2025strong}]\label{def:ipco}
			The ``\emph{isometric path complexity}'' of a connected graph $G$, denoted by $\ipco{G}$, is the minimum integer $k$ such that there exists a vertex $v\in V(G)$ that satisfy the following property: any isometric path of $G$ can be covered by $k$ many $v$-rooted isometric paths. 
		\end{definition}
		
		\begin{definition}[\cite{c2023isometric,chakraborty2025strong}]\label{def:strong}
			The ``\emph{strong isometric path complexity}'' of a connected graph $G$, denoted by $\sipco{G}$, is the minimum integer $k$ such that all vertices of $v\in V(G)$ satisfy the following property: any isometric path of $G$ can be covered by $k$ many $v$-rooted isometric paths. 
		\end{definition}
		
		A result of~\cite{c2023isometric} implies that the strong isometric path complexity of graphs without a theta, pyramid, or a long prism is at most $71$. Now \Cref{prp:induced-truem}, imply the following: 
		
		\begin{proposition}\label{prp:sipco}
			The strong isometric path complexity of $K_{2,3}$-induced minor-free graphs is at most $71$.
		\end{proposition}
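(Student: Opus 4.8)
The plan is to observe that this is an immediate consequence of the structural characterisation in \Cref{prp:induced-truem} combined with the bound quoted from \cite{c2023isometric} in the sentence preceding the statement. First I would fix a connected $K_{2,3}$-induced minor-free graph $G$ and apply \Cref{prp:induced-truem}, which tells us that $G$ contains no long prism, no pyramid, no theta, and no broken wheel as an induced subgraph. In particular, $G$ contains none of a theta, a pyramid, or a long prism.

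Next I would invoke the cited result of \cite{c2023isometric}, namely that every connected graph whose induced subgraphs exclude all of theta, pyramid, and long prism has strong isometric path complexity at most $71$. Since every $K_{2,3}$-induced minor-free graph belongs to this class — it satisfies exactly the three required exclusions, plus the additional (and here unused) exclusion of broken wheels — the conclusion $\sipco{G}\leq 71$ follows directly.

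The only point worth making explicit is that the hypothesis of the quoted theorem is weaker than $K_{2,3}$-induced minor-freeness: forbidding the four configurations of \Cref{prp:induced-truem} is a strictly stronger requirement than forbidding only theta, pyramid, and long prism, so the class of $K_{2,3}$-induced minor-free graphs is contained in the class to which the result of \cite{c2023isometric} applies. Consequently there is no genuine obstacle in the argument: the containment of forbidden-substructure classes does all the work, and no new reasoning about covering isometric paths by rooted isometric paths is needed beyond what is already established in \cite{c2023isometric}.
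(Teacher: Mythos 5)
Your proof is correct and matches the paper's own argument exactly: the paper likewise derives \Cref{prp:sipco} immediately by combining the exclusion of theta, pyramid, and long prism guaranteed by \Cref{prp:induced-truem} with the bound of $71$ from \cite{c2023isometric} for graphs forbidding those three configurations. Your additional remark that the broken-wheel exclusion is unused is a correct and sensible clarification, but introduces nothing beyond the paper's reasoning.
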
   
		
		Below, we show how the notion of strong isometric path complexity can be used to prove quasi-isometry with additive distortion between graphs. 
		
		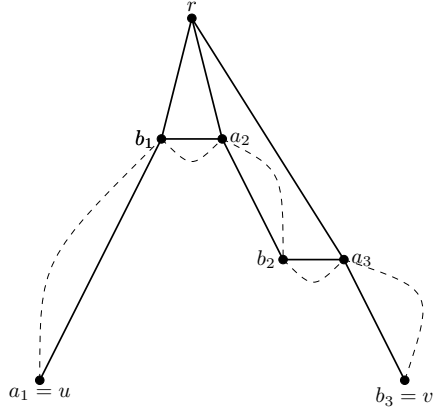
\begin{figure}
			\centering
			\scalebox{0.8}{\begin{tikzpicture}
					\foreach \x/\y [count = \n] in {5/5,3/1, 6/5, 7/3, 8/3, 9/1, 5.5/7 }
					{
						\filldraw (\x, \y) circle (2pt);
					}
					
					\foreach \x/\y/\w/\z [count = \n] in {5/5/3/1, 5/5/6/5,6/5/7/3, 7/3/8/3, 8/3/9/1, 5.5/7/5/5, 5.5/7/6/5, 5.5/7/8/3  }
					{
						\draw[thick] (\x, \y) -- (\w,\z);
					}
					
					\node[above] at (5.5,7) {$r$};
					\node [below] at (3,1) {$a_1=u$};
					\node [left] at (5,5) {$b_1$};
					\node [right] at (6,5) {$a_2$};
					\node [left] at (5,5) {$b_1$};
					\node [left] at (7,3) {$b_2$};
					\node [right] at (8,3) {$a_3$};
					\node [below] at (9,1) {$b_3=v$};
					
					\draw [dashed] (3,1) .. controls (3,3) .. (5,5); \draw [dashed] (5,5) .. controls (5.5,4.5) .. (6,5); \draw [dashed] (6,5) .. controls (7,4.5) .. (7,3);
					\draw [dashed] (7,3) .. controls (7.5,2.5) .. (8,3); \draw [dashed] (8,3) .. controls (9.5,2.5) .. (9,1); 
			\end{tikzpicture}}
			\caption{Illustration of the proof of \Cref{lem:sipco-distort}. Here $z=3$. The dashed curve from $u$ to $v$ indicates a $(u,v)$-path in $Y$. The solid black piecewise linear curve between $(u,v)$ that doesn't pass through $r$, indicates the $(u,v)$-isometric path $P$ in $X$. For $p\in \{a_1=u, b_2, b_3 =v \}$, the strictly monotone solid black curves between $r,p$ indicate a $(r,p)$-isometric path in $X$. }\label{fig:sipco-distort}
		\end{figure}
		
		\begin{lemma}\label{lem:sipco-distort}
			Let $X$ and $Y$ be two graphs such that $V(X)=V(Y)$, and $r\in V(X)$ be a vertex. Let $t\geq 1, k\geq 1$ be two integers such that the following holds. \begin{enumerate}[label=(\roman*)]
				\item for any edge $ab\in E(X)$, $\distance{Y}{a}{b}\leq k+1$;
				\item for any $r$-rooted isometric path $Q\subseteq X$ and $\{a,b\}\subseteq V(Q)$, $\distance{Y}{a}{b}\leq \distance{X}{a}{b} + t$.
			\end{enumerate} Then, for $\{u,v\}\subseteq V(X)$, $\distance{Y}{u}{v}\leq \distance{X}{u}{v} + t\cdot \sipco{X} + k(\sipco{X}-1)$.
		\end{lemma}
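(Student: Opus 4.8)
The plan is to bound $\distance{Y}{u}{v}$ by invoking the strong isometric path complexity of $X$ to decompose a $(u,v)$-isometric path of $X$ into a bounded number of $r$-rooted pieces, and then to apply hypotheses (i) and (ii) along each piece. Let me set $s \coloneq \sipco{X}$ and fix a $(u,v)$-isometric path $P$ in $X$, so $P$ has length $\distance{X}{u}{v}$. By \Cref{def:strong}, the vertex set of $P$ can be covered by $s$ many $r$-rooted isometric paths $Q_1,\dots,Q_s$ of $X$.

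First I would use this covering to mark special vertices along $P$. Traversing $P$ from $u$ to $v$, I partition the edges of $P$ into maximal subpaths, each of which is entirely contained in one of the $Q_i$, together with a bounded number of ``transition'' edges where the path switches from one $Q_i$ to another. The key combinatorial point is that, since there are only $s$ rooted paths in the cover, one can choose a sequence of at most $s$ subpaths $P_1,\dots,P_\ell$ of $P$ (with $\ell \leq s$) such that each $P_j$ is a subpath of some $Q_{i(j)}$, and consecutive subpaths are joined by a single edge of $P$; this uses at most $\ell - 1 \leq s-1$ connecting edges. This is essentially the picture in \Cref{fig:sipco-distort}, where the points $a_1=u,b_1,a_2,b_2,\dots$ alternate between endpoints of covered segments and endpoints of connecting edges.

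Then I would estimate $\distance{Y}{u}{v}$ by concatenating $Y$-paths along this decomposition. For each covered subpath $P_j$, its endpoints $a_j,b_j$ both lie on the $r$-rooted isometric path $Q_{i(j)}$, so hypothesis (ii) gives $\distance{Y}{a_j}{b_j} \leq \distance{X}{a_j}{b_j} + t$. For each connecting edge $e = xy \in E(P) \subseteq E(X)$, hypothesis (i) gives $\distance{Y}{x}{y} \leq k+1$. Summing these bounds along the walk in $Y$ obtained by splicing together the segment-paths and the edge-paths, and using the triangle inequality in $Y$, yields
\begin{align*}
\distance{Y}{u}{v} &\leq \sum_{j=1}^{\ell}\bigl(\distance{X}{a_j}{b_j} + t\bigr) + \sum_{\text{connecting } e} (k+1)\\
&\leq \Bigl(\sum_{j=1}^{\ell}\distance{X}{a_j}{b_j} + (\ell-1)\cdot 1\Bigr) + \ell\cdot t + (\ell-1)\cdot k.
\end{align*}
Since the subpaths $P_j$ together with the connecting edges exactly retrace $P$, the quantity $\sum_j \distance{X}{a_j}{b_j} + (\ell-1)$ equals the length of $P$, which is $\distance{X}{u}{v}$. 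With $\ell \leq s$ this collapses to $\distance{Y}{u}{v} \leq \distance{X}{u}{v} + t\cdot s + k(s-1)$, as required.

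The main obstacle I anticipate is the bookkeeping in the decomposition step: one must argue carefully that the number of covered segments together with connecting edges can be taken to be at most $s$ and $s-1$ respectively, rather than something growing with the length of $P$. The clean way to guarantee this is to walk along $P$ and, at each vertex, record which $Q_i$ currently covers it; a segment-boundary (a connecting edge) need only be introduced when the walk is forced to leave the current $Q_i$, and since each of the $s$ rooted paths induces on $P$ a set whose intersection with $P$ is itself a subpath (as $P$ and each $Q_i$ are isometric), the number of maximal runs is at most $s$. Handling the possibility that these induced intersections are not contiguous subpaths of $P$ — and ensuring that in the worst case we still use only $s$ segments and $s-1$ connectors — is the delicate point, but it is exactly what the bound $t\cdot s + k(s-1)$ is calibrated to absorb.
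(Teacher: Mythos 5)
Your proposal follows essentially the same route as the paper's proof: decompose the $(u,v)$-isometric path $P$ into at most $\sipco{X}$ segments, each lying on one $r$-rooted covering path, joined by at most $\sipco{X}-1$ single edges of $P$, then apply hypothesis (ii) to each segment and hypothesis (i) to each connecting edge and telescope, exactly matching the paper's chain of inequalities. The ``delicate point'' you flag is handled in the paper by taking $\mathcal{Q}$ to be a minimum-cardinality cover with $|V(P)\cap V(Q)|$ maximized and extracting segments greedily from $u$ to $v$ (the paper then simply observes, as you do, that each extracted piece is a subpath of $P$ covered by one rooted path), so your bookkeeping is at the same level of rigor as the paper's.
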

		\begin{proof}
			
			Let $P$ be any $(u,v)$-isometric path in $X$ and $\mathcal{Q}$ be a minimum cardinality set of $r$-rooted isometric paths in $X$ that covers $P$ and for any $Q\in \mathcal{Q}$, $|V(P)\cap V(Q)|$ is maximized. See \Cref{fig:sipco-distort} for illustration. Let $z\coloneq |\mathcal{Q}|$ and by definition, $z\leq \sipco{X}$. Observe that for any $Q\in \mathcal{Q}$, the path induced by  $V(P)\cap V(Q)$ is an isometric path in $X$.  Moreover, there exists exactly one path $Q_1\in \mathcal{Q}$ that contains $u$. Otherwise, if there exists a path $Q'\in \mathcal{Q}\setminus \{Q_1\}$ containing $u$, then either $V(P)\cap V(Q')$ contains $V(P)\cap V(Q_1)$, or vice versa.  But this contradicts the minimality of $\mathcal{Q}$.
			
			Let $Q_1\in \mathcal{Q}$ be the path containing $u$, and define $P_1\coloneq \InducedSub{P}{V(P)\cap V(Q_1)}$. 
			Let $a_1\coloneq u$, and $b_1$ be the end-vertex of $P_1$ distinct from $a_1$.  For $i\in\{2,\ldots, z\}$, $S_i\coloneq V(P) \setminus \left\{ \displaystyle\bigcup\limits_{j<i} V(P_j)\right\}$, $a_i\in  S_i$ be the vertex adjacent to $b_{i-1}$ in $P$, $Q_i\in \mathcal{Q}$ be the path containing $a_i$, and $P_i\coloneq \InducedSub{P}{S_i \cap V(Q_i)}$. Note that $b_z=v$.	
			Hence,
			
			\begin{equation*} \label{eq1}
				\begin{split}
					\distance{Y}{u}{v} & = \distance{Y}{a_1}{b_z} \\
					& \leq \displaystyle\sum\limits_{i=1}^{z} \distance{Y}{a_i}{b_i} + \displaystyle\sum\limits_{i=1}^{z-1} \distance{Y}{b_i}{a_{i+1}}\\
					& \leq \displaystyle\sum\limits_{i=1}^{z} (\distance{X}{a_i}{b_i}+t) + \displaystyle\sum\limits_{i=1}^{z-1} (k+1) \hspace{10pt} \\
					& \leq   \displaystyle\sum\limits_{i=1}^{z} \distance{X}{a_i}{b_i} + \displaystyle\sum\limits_{i=1}^{z-1} \distance{X}{b_i}{a_{i+1}}+ t\cdot z + k(z-1) \\
					& \leq  \distance{X}{a_1}{b_z} + t\cdot z + k(z-1) \\
					& =   \distance{X}{u}{v} + t\cdot z + k(z-1)
				\end{split}
			\end{equation*}
			This completes the proof.
		\end{proof}

		\section{Proof of \Cref{main:th:K23}}\label{sec:proof}
		
		For the entirety of this section, $G$ shall denote a fixed $K_{2,3}$-induced minor-free graph, and $r$ shall denote a fixed vertex of $G$. All clusters (and their parent sets)  are defined w.r.t the metric in $G$ and w.r.t the vertex $r$. The output of \Cref{algo:embed} with $G$ and $r$ as input will be denoted as $H$. See \Cref{fig:example} for an example. The sets $\Sphere{k}{r}$, $\Ball{k}{r}$, $\ParentSet{r}{S}$ always contains vertices of $G$ and are defined w.r.t the metric in $G$.  In \Cref{sec:basic}, we observe some properties of $H$. In \Cref{sec:distort}, we prove lemmas relating the metric of $G$ and $H$. In \Cref{sec:cycle}, we prove some properties on the induced cycles of $H$, which we use in \Cref{sec:tree-width} to show that $H$ has tree-width at most two. We complete the proof in \Cref{sec:complete}. 
		
		\begin{figure}[t]
			\centering
			\scalebox{0.8}{
				\begin{subfigure}{0.45\textwidth}
					\begin{tikzpicture}
						\node[above] at (5,5.15) {$r$};
						\foreach \x/\y/\w/\z [count = \n] in {4.85/4.85/5.15/5.15 , 2.5/3.85/7.5/4.15 , 1.75/2.85/4.65/3.15, 4.8/2.85/5.2/3.15, 5.35/2.85/5.65/3.15, 5.85/2.85/8.15/3.15,  2.85/1.85/3.15/2.15, 3.85/1.85/4.15/2.15, 5.85/1.85/6.65/2.15, 7.35/1.85/7.65/2.15}
						{
							\draw[dashed,fill=gray!40,opacity=0.5] (\x, \y) rectangle (\w,\z);
						}
						\foreach \x/\y [count = \n] in {5/5,4/4,3/4, 5/4, 6/4, 7/4, 2/3, 3/3, 4/3, 4.5/3, 5/3, 5.5/3, 6/3, 7/3, 7.5/3, 8/3, 7.5/2, 6.5/2 , 6/2, 4/2, 3/2 }
						{
							\filldraw (\x, \y) circle (1.5pt);
							
						}
						
						\foreach \x/\y/\w/\z [count = \n] in {5/5/4/4,5/5/4/4,5/5/3/4, 5/5/5/4, 5/5/6/4, 5/5/7/4, 3/3/2/3, 3/3/4/3,  3/4/2/3, 3/4/3/3, 3/4/4/3 ,  5/4/4.5/3,5/4/4/3, 4/3/4.5/3, 5/3/4/4, 5/4/5/3,  5.5/3/5/4,  5.5/3/6/4,  5/4/6/3, 7/4/6/3, 7/4/7/3, 7.5/3/8/3, 7/3/7.5/3, 7/4/8/3,  7/3/7.5/2, 7.5/3/7.5/2, 7.5/2/8/3 ,6.5/2/7/3 ,6.5/2/6/3 , 6/2/6/3, 6.5/2/6/2, 4.5/3/4/2, 4/3/4/2, 4/3/3/2, 2/3/3/2, 7/4/7.5/3 }
						{
							\draw (\x, \y) -- (\w,\z);
						}

					\end{tikzpicture}
					\caption{A graph $G$.}
				\end{subfigure}
				\begin{subfigure}{0.45\textwidth}
					\begin{tikzpicture}
						\node[above] at (5,5.15) {$r$};
						\foreach \x/\y [count = \n] in {5/5,4/4,3/4, 5/4, 6/4, 7/4, 2/3, 3/3, 4/3, 4.5/3, 5/3, 5.5/3, 6/3, 7/3, 7.5/3, 8/3, 7.5/2, 6.5/2 , 6/2, 4/2, 3/2 }
						{
							\filldraw (\x, \y) circle (1.5pt);
							
						}
						
						\foreach \x/\y/\w/\z [count = \n] in {5/5/4/4,5/5/3/4, 5/5/5/4, 5/5/6/4, 5/5/7/4,  3/4/2/3, 3/4/3/3, 3/4/4/3 ,  5/4/4.5/3,5/4/4/3,  4/3/3/2, 2/3/3/2,  5/4/5/3,  5.5/3/5/4,  5.5/3/6/4, 5/3/4/4,  5/4/6/3, 7/4/6/3, 7/4/7.5/3, 7/4/7/3, 7/4/8/3, 4/2/4.5/3, 6/2/6/3, 6.5/2/6/3, 6.5/2/7/3, 7.5/2/7.5/3}
						{
							\draw (\x, \y) -- (\w,\z);
						}
					\end{tikzpicture}
					\caption{The graph $H$.}
			\end{subfigure}}
			\caption{ The shaded regions indicate the clusters w.r.t $r$.  The graph $H$ is the output of \Cref{algo:embed} with $G$ and $r$ as input.}\label{fig:example}
		\end{figure}
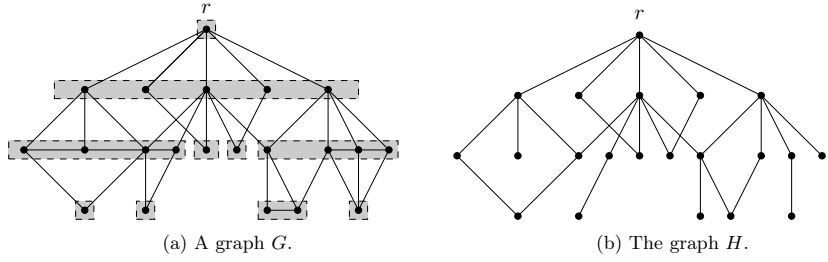
		\begin{algorithm}[t]
			\newcommand{\hrulealg}[0]{\vspace{1mm} \hrule \vspace{1mm}}
			\caption{An algorithm to embed $K_{2,3}$-induced minor-free graph on $K_4$-minor free graph.}\label{algo:embed}
			\SetKwInOut{KwIn}{Input}
			\SetKwInOut{KwOut}{Output}
			\KwIn{A $K_{2,3}$-induced minor-free graph $G$ and a vertex $r$.}
			\KwOut{A graph $H=(V(G),E')$.}
			\hrulealg

			Let $E'=\emptyset$.
			
			Construct $T=\clusterGraphv{r}{G}$. \label{line:chose-vertex}
			
			\For{each cluster $S\in V(T)$ }{
				\If{$\InducedSub{G}{ \ParentSet{r}{S}}$ is connected}{ \label{if-main}
					Choose a vertex $w$ arbitrarily in $ \ParentSet{r}{S}$. \label{line-chose-v-clique}
					
					\For{each vertex $v\in S$}{$E'=E'\cup \{vw\}$. \label{line-assign-clique-edge}}
				}
				\Else{
					Let $\InducedSub{G}{\ParentSet{r}{S}}$ has two connected components $C_1,C_2$. For $i\in \{1,2\}$, choose a vertex $w_i\in C_i$ and let $D_i\coloneq \{u\in S\colon \exists v\in C_i, uv\in E(G)\}$. \label{line-chose-v-2-clique}
					
					\If{$D_1\cap D_2=\emptyset$}{
						\For{each vertex $v\in D_1$}{$E'=E'\cup \{vw_1\}$.}\label{line:empty-parent-1}
						\For{each vertex $v\in D_2$}{$E'=E'\cup \{vw_2\}$.}\label{line:empty-parent-2}
						\If{there exists an edge $uv\in E(G)$ with $u\in D_1,v\in D_2$}
						{
							$E'=E'\cup \{uv\}$\label{line:empty}
						}
					}
					\Else{
						Color the vertices of $C_1$ as \emph{blue}. \label{line:color}
						
						\For{each vertex $v\in D_1$}{$E'=E'\cup \{vw_1\}$.} \label{line-same-parent-1}
						\For{each vertex $v\in D_2\setminus D_1$}{$E'=E'\cup \{vw_2\}$.} \label{line-same-parent-2}
						Choose a vertex $u\in D_1\cap D_2$ arbitrarily and $E'=E'\cup \{uw_2\}$\label{line-intersection}
					}
					
				}
				
				\Return{$H=(V(G),E')$.}
			}
			
		\end{algorithm} 
		
		\subsection{Basic facts about $H$}\label{sec:basic}
		
		Observe that, for every cluster $S$ of $G$ and vertex $v\in S$, we have that $v$ is adjacent to at least one vertex from $\ParentSet{r}{S}$ in $H$. Hence, the graph $H$ is connected.	Below we state some observations which follow immediately from \Cref{algo:embed}. We provide the proofs for completeness.
		
		\begin{observation}\label{obs-trivial-4}
			Let $u,v\in V(G)$ be two vertices such that $\distance{G}{r}{v}=\distance{G}{r}{u}+1$ and $uv\in E(H)$. Let $S$ be the cluster in $G$ that contains $v$. Then 
			\begin{enumerate*}[label=(\alph*),itemsep=0pt]
				\item\label{it:parent} $u\in \ParentSet{r}{S}$; and
				\item\label{it:same-comp} there exists $w\in V(G)$ such that $vw\in E(G)$ and $u,w$ lies in the same connected component in $\InducedSub{G}{\ParentSet{r}{S}}$.
			\end{enumerate*} 
		\end{observation}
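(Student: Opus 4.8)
The plan is to trace how each edge of $H$ is created by \Cref{algo:embed} and to observe that the distance condition $\distance{G}{r}{v}=\distance{G}{r}{u}+1$ pins down exactly which kind of edge $uv$ must be. First I would record the elementary distance bookkeeping: if $S\subseteq\Sphere{k}{r}$ then every vertex of $S$ is at distance $k$ from $r$, while $\ParentSet{r}{S}\subseteq\Sphere{k-1}{r}$ consists of vertices at distance $k-1$. Hence every edge the algorithm adds between a cluster $S$ and its parent set (lines~\ref{line-assign-clique-edge},~\ref{line:empty-parent-1},~\ref{line:empty-parent-2},~\ref{line-same-parent-1},~\ref{line-same-parent-2},~\ref{line-intersection}) joins a vertex at distance $k$ to one at distance $k-1$, whereas the only other edge ever added (line~\ref{line:empty}) joins two vertices of the same cluster $S$, which are at equal distance from $r$. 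Since clusters partition $V(G)$, the vertex $v$ lies in a unique cluster $S$; because $\distance{G}{r}{v}=\distance{G}{r}{u}+1$, the edge $uv$ cannot be a line-\ref{line:empty} edge, and among the cluster-to-parent edges it must be one created while processing $S$ itself, with $v\in S$ the lower endpoint and $u\in\ParentSet{r}{S}$ the higher one. This is precisely \ref{it:parent}.

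For \ref{it:same-comp} I would first note that every $v\in S$ has at least one $G$-neighbour inside $\ParentSet{r}{S}$: the penultimate vertex of any shortest $r$--$v$ path lies in $\Sphere{k-1}{r}$ and is adjacent to $v$, hence belongs to $\ParentSet{r}{S}$ by definition. I would then split according to the branch of \Cref{algo:embed} that produced $uv$. If $\InducedSub{G}{\ParentSet{r}{S}}$ is connected (the branch at line~\ref{if-main}), its single component contains every $G$-neighbour of $v$ in $\ParentSet{r}{S}$ as well as $u$, so any such neighbour serves as the required $w$. Otherwise $\InducedSub{G}{\ParentSet{r}{S}}$ has two components $C_1,C_2$ and $u\in\{w_1,w_2\}$. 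The key structural point is the definition of $D_i$ at line~\ref{line-chose-v-2-clique}: $D_i$ is exactly the set of vertices of $S$ having a $G$-neighbour in $C_i$. Thus whenever the algorithm joins a vertex to $w_i$, that vertex lies in $D_i$ and therefore has a $G$-neighbour $w\in C_i$, which lies in the same component $C_i$ as $w_i=u$. Verifying in each of lines~\ref{line:empty-parent-1},~\ref{line:empty-parent-2},~\ref{line-same-parent-1},~\ref{line-same-parent-2},~\ref{line-intersection} that $u=w_i$ indeed forces $v\in D_i$ (in particular for the single edge $uw_2$ of line~\ref{line-intersection}, where $v$ is the chosen vertex of $D_1\cap D_2\subseteq D_2$) then yields the desired $w$.

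I do not expect a serious obstacle: the statement is essentially a read-off of the edge-creation rules of \Cref{algo:embed}. The only point needing a little care is the uniqueness argument in the first paragraph, namely that the distance condition simultaneously excludes the within-cluster edge of line~\ref{line:empty} and forces $uv$ to be created while processing the cluster containing $v$ (rather than any other cluster); this is what guarantees that $S$ is the cluster on which the relevant branch of the algorithm was executed, so that the case analysis for \ref{it:same-comp} applies to the correct parent set.
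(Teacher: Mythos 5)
Your proposal is correct and follows essentially the same route as the paper: part \ref{it:parent} by inspecting which lines of \Cref{algo:embed} create edges (the paper cites \Cref{line-chose-v-clique,line-chose-v-2-clique}), and part \ref{it:same-comp} by the same case split on whether $\InducedSub{G}{\ParentSet{r}{S}}$ is connected, using in the two-component case that $u\in C_i$ forces $v\in D_i$ and hence a $G$-neighbour $w\in C_i$. Your writeup is merely more explicit about the distance bookkeeping that rules out \Cref{line:empty} edges and pins the processing to the cluster $S$, which the paper leaves implicit.
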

		\begin{proof}
			$(a)$ follows immediately from \Cref{line-chose-v-clique,line-chose-v-2-clique} of \Cref{algo:embed}. 	To prove $(b)$, first consider the case when $F\coloneq \InducedSub{G}{\ParentSet{r}{S}}$ is connected. Clearly, $v$ has a neighbor $w\in \ParentSet{r}{S}$ such that $u$ and $w$ lie in the same connected component in $F$. Now consider the case when $F$ has two connected components $C_1,C_2$ and for $i\in \{1,2\}$, let $D_i\coloneq N(C_i)\cap S$. Since $C_1,C_2$ are cliques in $G$ (due to \Cref{obs:K23cut}\ref{it:cutk23-c}), and since there exists $i\in \{1,2\}$ such that $u\in C_i$, \Cref{algo:embed} ensures that $v\in D_i$. This implies $C_i$ has a vertex $w$ with $\{vw,uw\}\subseteq E(G)$. 
		\end{proof}
		
		\Cref{obs-trivial-4} immediately implies the following.
		
		\begin{observation}\label{obs:least-common}
			Let $P$ be an $(u,v)$-path in $H$ and $S_u,S_v$ be the clusters containing $u,v$, respectively. Let $S$ be the least common ancestor of $S_u$ and $S_v$ in $\clusterGraphv{r}{G}$. Then at least one vertex of $P$ lies in $S$. 
		\end{observation}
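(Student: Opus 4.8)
The plan is to translate the path $P$ into a walk in the layering tree and then invoke the elementary fact that a walk between two nodes of a tree must pass through their least common ancestor. Write $P = (u = x_0, x_1, \ldots, x_\ell = v)$, and for each $i$ let $C_i$ be the unique cluster w.r.t $r$ containing $x_i$, so that $C_0 = S_u$ and $C_\ell = S_v$. The heart of the argument is the structural claim that for every edge $x_i x_{i+1}$ of $P$, the clusters $C_i$ and $C_{i+1}$ are either equal or form a parent--child pair in $\clusterGraphv{r}{G}$.

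To establish the claim, I would first observe that every edge added by \Cref{algo:embed} joins a vertex of some cluster $S$ either to a vertex of $\ParentSet{r}{S}$ (on \Cref{line-assign-clique-edge,line:empty-parent-1,line:empty-parent-2,line-same-parent-1,line-same-parent-2,line-intersection}) or to another vertex of $S$ itself (only on \Cref{line:empty}); consequently the endpoints of any edge of $H$ differ in their distance from $r$ by at most one. If $\distance{G}{r}{x_i} = \distance{G}{r}{x_{i+1}}$, the edge must have been created on \Cref{line:empty}, whose two endpoints lie in a common cluster, so $C_i = C_{i+1}$. If the distances differ by one, say $\distance{G}{r}{x_{i+1}} = \distance{G}{r}{x_i}+1$, then \Cref{obs-trivial-4}\ref{it:parent} gives $x_i \in \ParentSet{r}{C_{i+1}}$; since $\ParentSet{r}{C_{i+1}}$ is contained in the parent cluster of $C_{i+1}$ and the clusters partition $V(G)$, this forces $C_i$ to be exactly the parent of $C_{i+1}$. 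This proves the claim.

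The claim says that $(C_0, C_1, \ldots, C_\ell)$ is a walk in the tree $\clusterGraphv{r}{G}$ from $S_u$ to $S_v$, with consecutive entries equal or adjacent. Let $S$ be the least common ancestor of $S_u$ and $S_v$. If $S \in \{S_u, S_v\}$, then $u$ or $v$ already lies in $S$ and we are done. Otherwise $S_u$ and $S_v$ lie in the subtrees rooted at two distinct children of $S$; deleting from $\clusterGraphv{r}{G}$ the edge joining $S$ to the child whose subtree contains $S_u$ separates $S_u$ from $S_v$, so the walk must traverse that edge and in particular must visit $S$. Hence $C_i = S$ for some $i$, which means $x_i \in S$; as $x_i$ is a vertex of $P$, this finishes the argument.

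The only delicate point is the bookkeeping in the structural claim: one must verify by inspection of \Cref{algo:embed} that the sole source of edges whose endpoints share a layer $\Sphere{k}{r}$ is \Cref{line:empty}, and that those endpoints belong to the same cluster. Once this is checked, the remainder is the routine observation that a walk between two nodes of a tree necessarily passes through their least common ancestor, so I do not expect any real obstacle beyond the careful case analysis of the algorithm's edges.
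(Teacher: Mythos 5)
Your proposal is correct and is essentially the argument the paper intends: the paper derives the observation directly from \Cref{obs-trivial-4} (every edge of $H$ stays inside a cluster or joins a cluster to its parent set, so a path in $H$ projects to a walk in the tree $\clusterGraphv{r}{G}$, which must pass through the least common ancestor). Your write-up simply makes explicit the case analysis of \Cref{algo:embed} and the elementary tree-separation step that the paper leaves implicit.
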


		\begin{observation}\label{obs-trivial-2}
			Let $S$ be a cluster  and $F\coloneq \{v\in \ParentSet{r}{S}\colon \exists u\in S, uv\in E(H)\}$. Then, \begin{enumerate*}[label=(\alph*)]
				\item\label{it:parent-deg} $|F|\leq 2$; and
				\item\label{it:parent-stable}  if $|F|=2$ then $\InducedSub{G}{\ParentSet{r}{S}}$ has two connected components in $G$.
			\end{enumerate*} 
		\end{observation}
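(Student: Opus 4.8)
The statement is a direct consequence of inspecting how \Cref{algo:embed} inserts edges, so the plan is to trace the single iteration of the main for-loop that processes $S$ and to argue that no other iteration can create an edge between $S$ and $\ParentSet{r}{S}$.

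First I would pin down which edges of $H$ can have one endpoint in $S$ and the other in $\ParentSet{r}{S}$. Every edge added to $E'$ is inserted while processing some cluster $S'$, and such an edge is either between $S'$ and $\ParentSet{r}{S'}$ (lines \ref{line-assign-clique-edge}, \ref{line:empty-parent-1}, \ref{line:empty-parent-2}, \ref{line-same-parent-1}, \ref{line-same-parent-2}, \ref{line-intersection}) or lies entirely inside $S'$ (line \ref{line:empty}). Since the clusters w.r.t.\ $r$ partition $V(G)$ and $\ParentSet{r}{S}$ is contained in the parent cluster of $S$ --- a cluster distinct from $S$ --- an edge with one end in $S$ and the other in $\ParentSet{r}{S}$ cannot be of the intra-cluster type, and can be of the first type only when $S'=S$. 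Hence every edge of $H$ counted by $F$ is inserted during the iteration that processes $S$ itself.

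Next I would split into the two cases dictated by the test in line \ref{if-main}, using \Cref{obs:K23cut}\ref{lem:twocomp} to ensure the \textbf{else} branch handles exactly two components. If $\InducedSub{G}{\ParentSet{r}{S}}$ is connected, every edge inserted from $S$ to the parent set is incident to the single vertex $w$ chosen in line \ref{line-chose-v-clique}, so $F\subseteq\{w\}$ and $|F|\leq 1$. If instead $\InducedSub{G}{\ParentSet{r}{S}}$ has two components $C_1,C_2$, then in both sub-cases ($D_1\cap D_2=\emptyset$ and $D_1\cap D_2\neq\emptyset$) every edge from $S$ to the parent set is incident to $w_1\in C_1$ or $w_2\in C_2$; the only additional edge inserted in this branch, in line \ref{line:empty}, joins two vertices of $S$ and therefore does not contribute to $F$. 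Thus $F\subseteq\{w_1,w_2\}$ and $|F|\leq 2$.

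Combining the two cases gives $|F|\leq 2$, which is part \ref{it:parent-deg}. For part \ref{it:parent-stable}, the connected case yields $|F|\leq 1$, so $|F|=2$ can occur only in the \textbf{else} branch, where $\InducedSub{G}{\ParentSet{r}{S}}$ has two connected components, which is exactly the claim. There is no genuinely hard step: the whole argument is bookkeeping over the algorithm, and the only point that needs care is the first paragraph --- confirming that processing clusters other than $S$ never inserts an edge between $S$ and $\ParentSet{r}{S}$ --- which rests on the disjointness of clusters together with the fact that $\ParentSet{r}{S}$ lies inside a single (parent) cluster.
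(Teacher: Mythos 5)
Your proposal is correct and follows essentially the same route as the paper, which simply cites \Cref{line-chose-v-clique,line-chose-v-2-clique} of \Cref{algo:embed}: every edge of $H$ between $S$ and $\ParentSet{r}{S}$ arises in the iteration processing $S$ and is incident to the chosen vertex $w$ (connected case) or to $w_1,w_2$ (two-component case). Your first paragraph merely makes explicit the bookkeeping the paper leaves implicit, namely that no other iteration can contribute such an edge.
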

		\begin{proof}
			The proof follows from \Cref{line-chose-v-clique,line-chose-v-2-clique} of \Cref{algo:embed}. 
		\end{proof}

		\begin{observation}\label{obs-trivial-3}
			Let $S$ be a cluster and let $F\coloneq \{u\in S\colon \exists \{v_1,v_2\}\subseteq \ParentSet{r}{S}, \{uv_1,uv_2\}\subseteq E(H)\}$. Then $|F|\leq 1$. 
		\end{observation}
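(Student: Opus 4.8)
The plan is to prove this by a direct case analysis following the branches of \Cref{algo:embed}, in the same spirit as the proof of \Cref{obs-trivial-2}. The first step I would take is to reduce the statement to a single iteration of the algorithm: every edge of $H$ joining a vertex $u\in S$ to a vertex of $\ParentSet{r}{S}$ is created \emph{while the algorithm processes $S$ itself}. Indeed, when the algorithm processes an arbitrary cluster $S''$, the edges it inserts join $S''$ to $\ParentSet{r}{S''}$, and $\ParentSet{r}{S''}$ lies one sphere closer to $r$ than $S''$; so the only way such an edge can connect $S$ to $\ParentSet{r}{S}$ is if $S''=S$. This reduces the whole claim to counting, within the single iteration for $S$, how many vertices of $S$ receive two edges into $\ParentSet{r}{S}$.

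Next I would split into the three branches of the construction. In the connected branch (\Cref{if-main}), every $v\in S$ is joined to the single vertex $w$ chosen on \Cref{line-chose-v-clique} via \Cref{line-assign-clique-edge}, so no vertex of $S$ acquires a second neighbour in $\ParentSet{r}{S}$ and $F=\emptyset$. In the two-component branch with $D_1\cap D_2=\emptyset$, each $v\in D_1$ is joined only to $w_1$ and each $v\in D_2$ only to $w_2$ (\Cref{line:empty-parent-1,line:empty-parent-2}); since $D_1$ and $D_2$ are disjoint, no vertex receives both, so again $F=\emptyset$. Here I would take care to note that the extra edge possibly added on \Cref{line:empty} runs between some $u\in D_1$ and $v\in D_2$, both of which lie in $S$; hence it is an edge \emph{internal} to $S$ and contributes no neighbour in $\ParentSet{r}{S}$.

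The only branch that produces a vertex of $F$ is the two-component branch with $D_1\cap D_2\neq\emptyset$. There, the vertices of $D_1$ are joined to $w_1$ (\Cref{line-same-parent-1}) and the vertices of $D_2\setminus D_1$ to $w_2$ (\Cref{line-same-parent-2}), so each of them has exactly one parent-neighbour; the single vertex $u$ chosen on \Cref{line-intersection} additionally receives the edge $uw_2$, giving it the two neighbours $w_1,w_2$ in $\ParentSet{r}{S}$. As exactly one such vertex is selected, $|F|=1$. Combining the three branches yields $|F|\le 1$.

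I do not expect a genuine obstacle: the statement is a bookkeeping consequence of how \Cref{algo:embed} inserts edges. The two points that need care—and that I would make explicit rather than leave implicit—are (i) confirming that no edge added while processing a \emph{different} cluster can give $u\in S$ a second neighbour in $\ParentSet{r}{S}$, and (ii) recognising that the edge of \Cref{line:empty} is internal to $S$ and hence irrelevant to the count. With these two observations settled, the case analysis is immediate.
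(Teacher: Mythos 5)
Your proposal is correct and matches the paper's approach: the paper's proof is simply the one-line remark that the observation follows from \Cref{line-intersection} of \Cref{algo:embed}, and your argument is exactly that inspection carried out in full, with the reduction to a single iteration and the three-branch bookkeeping made explicit. The two points you flag (edges created while processing other clusters, and the internal edge of \Cref{line:empty}) are precisely the details the paper leaves implicit, so your write-up is a more careful version of the same proof.
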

		\begin{proof}
			The proof follows from \Cref{line-intersection} of \Cref{algo:embed}.
		\end{proof}
		
		\begin{observation}\label{obs-trivial-1}
			Let $S$ be a cluster and let $F \coloneq \{uv\in E(H)\colon u,v\in S\}$. Then, 	\begin{enumerate*}[label=(\alph*)]
				\item\label{it:horizontal-a} $|F|\leq 1$;
				\item \label{it:horizontal-b} if $F=\{uv\}$, then $\InducedSub{G}{\ParentSet{r}{S}}$ has two connected components, $u,v$ do not have a common neighbor in $H$, and  the set $F'=\{w\in S\colon \exists \{v_1,v_2\}\subseteq \ParentSet{r}{S}, \{wv_1,wv_2\}\subseteq E(H)\}$ is empty. 
			\end{enumerate*}
		\end{observation}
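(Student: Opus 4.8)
The plan is to read the claim directly off the structure of \Cref{algo:embed}, the only subtlety being to locate the unique line that can ever create an edge with both endpoints inside a single cluster. Throughout, recall that for a cluster $S\subseteq \Sphere{k}{r}$ we have $\ParentSet{r}{S}\subseteq \Sphere{k-1}{r}$, and that clusters are pairwise disjoint, each living on a single sphere. Hence an edge of $H$ with both endpoints in $S$ can only be produced while the main loop is processing $S$ itself: every edge added while processing another cluster $S'$ either joins two vertices of $S'$ or joins a vertex of $S'$ to a vertex of $\ParentSet{r}{S'}$, and neither case places both endpoints inside $S$. Scanning the edge-adding lines, every line except \Cref{line:empty} inserts an edge of the form (vertex of $S$)--(vertex of $\ParentSet{r}{S}$); only \Cref{line:empty} inserts a ``horizontal'' edge $uv$ with $u,v\in S$, and it does so at most once. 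This immediately gives part \ref{it:horizontal-a}.

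For part \ref{it:horizontal-b}, suppose $F=\{uv\}$. Then $uv$ was inserted by \Cref{line:empty}, which is reached only inside the \textbf{else} branch of the test on \Cref{if-main} --- i.e.\ precisely when $\InducedSub{G}{\ParentSet{r}{S}}$ has two connected components $C_1,C_2$, establishing the first assertion --- and only in the sub-case $D_1\cap D_2=\emptyset$. Moreover \Cref{line:empty} guarantees $u\in D_1$, $v\in D_2$, and, crucially, $uv\in E(G)$. I would keep this edge of $G$ on hand, as it is the lever that kills the hardest case. The claim $F'=\emptyset$ follows at once: in this sub-case the only edges from $S$ into $\ParentSet{r}{S}$ are those of \Cref{line:empty-parent-1,line:empty-parent-2}, so a vertex $w\in S$ is joined to $w_1$ exactly when $w\in D_1$ and to $w_2$ exactly when $w\in D_2$; since $D_1\cap D_2=\emptyset$, no vertex of $S$ acquires two neighbors in $\ParentSet{r}{S}$, whence $F'=\emptyset$.

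For the common-neighbor claim I would split on the three possible locations of a hypothetical common neighbor $x$ of $u$ and $v$ in $H$. If $x\in\ParentSet{r}{S}$, then by the previous paragraph $u$'s only neighbor there is $w_1\in C_1$ and $v$'s only neighbor there is $w_2\in C_2$, and $w_1\neq w_2$ since $C_1,C_2$ are distinct components; so no such $x$ exists. If $x\in S$, then $xu$ and $xv$ would be two further horizontal edges of $S$, contradicting part \ref{it:horizontal-a}. The remaining and genuinely load-bearing case is $x\in S''$ for a child $S''$ of $S$: here $xu,xv\in E(H)$ forces $u,v\in\ParentSet{r}{S''}$ with $x$ adjacent in $H$ to both. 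By \Cref{obs-trivial-3} at most one vertex of $S''$ has two neighbors in $\ParentSet{r}{S''}$, and a short inspection of \Cref{algo:embed} (a vertex of $S''$ receives two parent-edges only via the $D_1\cap D_2\neq\emptyset$ branch, namely the single vertex chosen on \Cref{line-intersection}, joined to the two chosen representatives $w_1,w_2$ on \Cref{line-same-parent-1,line-intersection}) shows those two neighbors must be the representatives of the two \emph{distinct} components of $\InducedSub{G}{\ParentSet{r}{S''}}$.

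Consequently $u$ and $v$ would lie in different components of $\InducedSub{G}{\ParentSet{r}{S''}}$, and therefore $uv\notin E(G)$ --- contradicting the fact $uv\in E(G)$ recorded above. This rules out the child-cluster case and completes the argument. The main obstacle is exactly this last case; the key idea is to turn the edge $uv\in E(G)$ handed to us by \Cref{line:empty} against the two-component structure that any common child-neighbor would be forced to impose.
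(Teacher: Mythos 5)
Your proof is correct and takes essentially the same approach as the paper, whose entire proof is the one-line remark that the claim follows from \Cref{line:empty}: you simply spell out the case analysis this remark compresses, observing that \Cref{line:empty} is the unique source of intra-cluster edges and that it hands you $uv\in E(G)$ with $u\in D_1$, $v\in D_2$, $D_1\cap D_2=\emptyset$. Your elaboration of the common-neighbor claim --- in particular the child-cluster case, where a common neighbor would force $u,v$ to be the two component representatives of its parent set and hence non-adjacent in $G$, contradicting $uv\in E(G)$ --- is exactly the detail the paper leaves implicit, and it is sound.
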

		\begin{proof}
			The proof follows from \Cref{line:empty}.
		\end{proof}

		We end this section by showing that the distances between $r$ and any vertex in $G$ remains the same in $H$.
		
		\begin{lemma}\label{lem:equidistant}
			For any vertex $u\in V(G)$, $\distance{H}{r}{u} = \distance{G}{r}{u}$. 
		\end{lemma}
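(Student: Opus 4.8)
The plan is to establish the two inequalities $\distance{G}{r}{u}\le \distance{H}{r}{u}$ and $\distance{H}{r}{u}\le\distance{G}{r}{u}$ separately, using the layering structure. Recall that any cluster $S$ satisfies $S\subseteq\Sphere{k}{r}$ for some $k\ge 0$, so every vertex of $S$ is at $G$-distance exactly $k$ from $r$, while $\ParentSet{r}{S}\subseteq\Sphere{k-1}{r}$.

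First I would prove the lower bound $\distance{G}{r}{u}\le\distance{H}{r}{u}$ by showing that \Cref{algo:embed} never adds an edge that skips a sphere, i.e.\ every edge $ab\in E(H)$ satisfies $\dmod{\distance{G}{r}{a}-\distance{G}{r}{b}}\le 1$. Indeed, inspecting the algorithm, each added edge is either a \emph{vertical} edge joining a vertex of a cluster $S\subseteq\Sphere{k}{r}$ to a vertex of $\ParentSet{r}{S}\subseteq\Sphere{k-1}{r}$ (\Cref{line-assign-clique-edge,line:empty-parent-1,line:empty-parent-2,line-same-parent-1,line-same-parent-2,line-intersection}), or the unique \emph{horizontal} edge of \Cref{line:empty} joining two vertices of the same cluster $S\subseteq\Sphere{k}{r}$; in the former case the endpoints lie in consecutive spheres and in the latter case in the same sphere. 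Hence along any $(r,u)$-path of $H$ the value $\distance{G}{r}{\cdot}$ increases by at most one per edge, so a walk of length $\ell$ starting at $r$ cannot reach a vertex $u$ with $\distance{G}{r}{u}>\ell$. Taking $\ell=\distance{H}{r}{u}$ yields the claim.

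For the reverse inequality I would argue by induction on $k\coloneq\distance{G}{r}{u}$ that $\distance{H}{r}{u}\le k$, constructing a path in $H$ that descends one sphere per step. The base case $k=0$ is $u=r$. For the inductive step, let $S$ be the cluster containing $u$, so $S\subseteq\Sphere{k}{r}$. As observed at the start of \Cref{sec:basic}, every vertex of a cluster is adjacent in $H$ to some vertex of its parent set; hence $u$ has a neighbor $w\in\ParentSet{r}{S}\subseteq\Sphere{k-1}{r}$ in $H$. By the induction hypothesis $\distance{H}{r}{w}\le k-1$, so $\distance{H}{r}{u}\le\distance{H}{r}{w}+1\le k$. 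Combining the two inequalities gives $\distance{H}{r}{u}=\distance{G}{r}{u}$.

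The only real work lies in the sphere-skipping verification used for the lower bound, which is a routine case analysis over the edge-adding lines of \Cref{algo:embed}; the upper bound is then an immediate consequence of the fact, already recorded before this lemma, that the construction leaves every non-root vertex with at least one $H$-neighbor in its parent set.
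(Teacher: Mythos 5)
Your proof is correct, and it is a mild reorganization of the paper's argument rather than the same write-up. The paper proves equality by a single induction on $k=\distance{G}{r}{v}$: assuming equality on the sphere $\Sphere{k-1}{r}$, it observes that any $(r,v)$-isometric path in $H$ must contain a vertex $u\in \ParentSet{r}{S}$ (where $S$ is the cluster containing $v$), which by the induction hypothesis satisfies $\distance{H}{r}{u}=k-1$, forcing $\distance{H}{r}{v}=k$. Your version splits the statement into two inequalities and proves the lower bound $\distance{G}{r}{u}\leq\distance{H}{r}{u}$ without any induction, via the edge classification: every edge added by \Cref{algo:embed} is either vertical (cluster to parent set, hence consecutive spheres) or the single horizontal edge of \Cref{line:empty} (within one sphere), so $\distance{G}{r}{\cdot}$ changes by at most one along each $H$-edge. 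This is a genuine simplification of the paper's lower-bound step: the paper's claim that every $(r,v)$-path in $H$ meets $\ParentSet{r}{S}$ is itself a consequence of exactly the edge classification you verify, but the paper leaves that separation property implicit, whereas your $1$-Lipschitz argument needs nothing beyond the per-line inspection. Your inductive upper bound coincides with the paper's, resting on the fact (recorded at the start of \Cref{sec:basic}) that every vertex of a cluster receives at least one $H$-edge into its parent set; your explicit check that $D_1\cup D_2=S$ covers the only point where this could fail. In substance the two proofs use the same structural facts about $H$; yours makes the dependencies explicit and decouples the two directions.
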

		\begin{proof}
			Observe that $ru\in E(G)$ implies $ru\in E(H)$. Assume that for an integer $k\geq 2$, $\distance{G}{r}{u} = k-1$ implies $\distance{H}{r}{u} = \distance{G}{r}{u}$. Let $v$ be a vertex with $\distance{G}{r}{v}=k$. Let $S$ be the cluster that contains $v$.  Notice that any $(r,v)$-isometric path in $H$ contains a vertex $u\in \ParentSet{r}{S}$. Since $\ParentSet{r}{S} \subseteq \Sphere{k-1}{r}$, we have that $\distance{G}{r}{u} =\distance{H}{r}{u} =k-1$. Hence, $\distance{H}{r}{v} = k$.  
		\end{proof}

		\subsection{Distances in $G$ and $H$}\label{sec:distort}
		
		We show that, two adjacent vertices that lie in the same cluster in $G$, remain close to each other in $H$.
		
		\begin{lemma}\label{lem:edge-same-cluster}
			For any edge $xy\in E(G)$ with $\distance{G}{r}{x}=\distance{G}{r}{y}$, $\distance{H}{x}{y} \leq 5$. 
		\end{lemma}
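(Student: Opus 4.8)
The plan is to first locate $x$ and $y$ in a common cluster and then read off the relevant edges of $H$ directly from \Cref{algo:embed}. Writing $k \coloneq \distance{G}{r}{x} = \distance{G}{r}{y}$, neither $x$ nor $y$ lies in $\Ball{k-1}{r}$, so the edge $xy$ survives in $G - \Ball{k-1}{r}$; hence $x$ and $y$ lie in the same connected component of $G - \Ball{k-1}{r}$ and therefore in a common cluster $S$ w.r.t.\ $r$. I will also use the elementary fact that every vertex of $S$ has a neighbour in $\ParentSet{r}{S}$ (a level-$(k-1)$ neighbour on a shortest path to $r$); consequently, when $\InducedSub{G}{\ParentSet{r}{S}}$ is disconnected, the sets $D_1, D_2$ defined in \Cref{line-chose-v-2-clique} cover all of $S$, so that $x$ and $y$ each belong to $D_1 \cup D_2$.

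If $\InducedSub{G}{\ParentSet{r}{S}}$ is connected, then \Cref{algo:embed} (\Cref{line-chose-v-clique,line-assign-clique-edge}) fixes one vertex $w \in \ParentSet{r}{S}$ and joins every vertex of $S$ to $w$ in $H$, so the walk $x - w - y$ yields $\distance{H}{x}{y} \le 2$. Otherwise $\InducedSub{G}{\ParentSet{r}{S}}$ has two components $C_1, C_2$ with chosen representatives $w_1, w_2$, and I split the analysis according to whether $D_1 \cap D_2$ is empty.

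Suppose first $D_1 \cap D_2 = \emptyset$. If $x$ and $y$ lie in the same $D_i$, they are both adjacent to $w_i$ (\Cref{line:empty-parent-1,line:empty-parent-2}) and $\distance{H}{x}{y} \le 2$. The only other possibility is $x \in D_1$ and $y \in D_2$ (up to symmetry); then $xy$ is itself a cross edge, so the test in \Cref{line:empty} succeeds and some edge $u_0 v_0$ with $u_0 \in D_1, v_0 \in D_2$ is added to $H$. Using $xw_1$ and $u_0 w_1$ (\Cref{line:empty-parent-1}), the cross edge $u_0 v_0$, and $v_0 w_2$ and $yw_2$ (\Cref{line:empty-parent-2}), the walk $x - w_1 - u_0 - v_0 - w_2 - y$ has length $5$, giving $\distance{H}{x}{y} \le 5$. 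Suppose instead $D_1 \cap D_2 \neq \emptyset$, and let $u^{\ast}$ be the vertex chosen in \Cref{line-intersection}, which is adjacent in $H$ both to $w_1$ (\Cref{line-same-parent-1}) and to $w_2$. Partitioning $S = D_1 \cup (D_2 \setminus D_1)$, if $x$ and $y$ lie in the same part they are both adjacent to $w_1$ or both to $w_2$, so $\distance{H}{x}{y} \le 2$; otherwise $x - w_1 - u^{\ast} - w_2 - y$ has length $4$.

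In every case $\distance{H}{x}{y} \le 5$, which is the claim. I do not anticipate a genuine obstacle here; the only subtle point is the disconnected-parent case with $D_1 \cap D_2 = \emptyset$ and $x, y$ on opposite sides, where $H$ need not contain the edge $xy$ itself but only some cross edge, forcing the detour through $w_1, u_0, v_0, w_2$ that produces the tight constant $5$.
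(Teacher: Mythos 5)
Your proof is correct and follows essentially the same route as the paper's: locate $x,y$ in a common cluster $S$, then case-split on whether $\InducedSub{G}{\ParentSet{r}{S}}$ is connected and, if not, on whether $D_1\cap D_2$ is empty, reading the walks $x$--$w_1$--$u_0$--$v_0$--$w_2$--$y$ (length $5$) and $x$--$w_1$--$u^{\ast}$--$w_2$--$y$ (length $4$) off \Cref{algo:embed} exactly as the paper does. You even make explicit two small points the paper leaves implicit, namely that the edge $xy$ survives in $G-\Ball{k-1}{r}$ so $x,y$ share a cluster, and that $D_1\cup D_2$ covers $S$ so the cross-edge test in \Cref{line:empty} is guaranteed to succeed.
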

		\begin{proof}
			Let $k=\distance{G}{r}{x}=\distance{G}{r}{y}$, and $S\subseteq \Sphere{k}{r}$ be the cluster containing $u,v$. Due to \Cref{obs:K23cut}, the graph $F\coloneq \InducedSub{G}{\ParentSet{r}{S} }$ has at most two connected components. If $F$ is connected, then consider the vertex $w$ which is chosen in \Cref{line-chose-v-clique} of \Cref{algo:embed}. Clearly, $\{xw,yw\}\subseteq E(H)$ (due to \Cref{line-assign-clique-edge} of \Cref{algo:embed}), and therefore $u,v$ has a common neighbor in $H$. Hence, $\distance{H}{x}{y}\leq 2$. Suppose $F$ has two connected components $C_1,C_2$. For $i\in\{1,2\}$, let $D_i\coloneq \{u\in S\colon \exists v\in C_i, uv\in E(G)\}$.

			Suppose $D_1\cap D_2 = \emptyset$. If there exists a $i\in \{1,2\}$, such that both $x,y\in D_i$, then there is a vertex $w\in C_i$ such that  $\{xw,yw\}\subseteq E(H)$. Otherwise, due to \Cref{line:empty}, there exists an edge $uv\in E(H)$ such that $u,x$ has a common neighbor in $H$ and $v,y$ has a common neighbor in $H$. Hence, $\distance{H}{x}{y}\leq 5$.
			
			Now assume $D_1\cap D_2 \neq \emptyset$.	 Without loss of generality assume that, the vertices of $C_1$ are colored as blue in \Cref{line:color}.	If $\{x,y\}\subseteq D_1$ or $\{x,y\}\subseteq D_2\setminus D_1$, then $x,y$ has a common neighbor in $H$ (due to \Cref{line-same-parent-1,line-same-parent-2} of \Cref{algo:embed}), and hence, $\distance{H}{x}{y}=2$.  Otherwise, without loss of generality assume $x\in D_1$ and $y\in D_2\setminus D_1$. Due to \Cref{line-intersection} of \Cref{algo:embed}, there exists a vertex $w\in D_1$ such that $x,w$ has a common neighbor in $H$ and $y,w$ has a common neighbor in $H$. Hence $\distance{H}{x}{y}\leq 4$.  
		\end{proof}

		Next, we extend \Cref{lem:edge-same-cluster} and show that adjacent vertices in $G$ remain close to each other in $H$.
		
		\begin{lemma}\label{lem:adj-distort}
			For any edge $xy\in E(G)$, $\distance{H}{x}{y}\leq 16$. 
		\end{lemma}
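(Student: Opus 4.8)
The plan is to bootstrap from \Cref{lem:edge-same-cluster} according to the position of $x$ and $y$ relative to $r$. Since $xy\in E(G)$ we have $\dmod{\distance{G}{r}{x}-\distance{G}{r}{y}}\leq 1$. If the two distances coincide, then $x,y$ belong to a common cluster and \Cref{lem:edge-same-cluster} already yields $\distance{H}{x}{y}\leq 5\leq 16$. Hence I would assume without loss of generality that $\distance{G}{r}{y}=\distance{G}{r}{x}+1$; writing $S$ for the cluster containing $y$, the definition of the parent set gives $x\in\ParentSet{r}{S}$. Set $F\coloneq\InducedSub{G}{\ParentSet{r}{S}}$.

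The decisive preliminary step is to note that $\ParentSet{r}{S}$ is contained in a single cluster of $G$: all its vertices lie at the same distance from $r$, and since each of them has a neighbour in the connected set $S$, any two of them lie in a common component of $G$ once a ball around $r$ is removed. Consequently every edge of $F$ joins two vertices of one cluster, so \Cref{lem:edge-same-cluster} bounds its $H$-distance by $5$. This is the mechanism that turns the $G$-structure of $F$ supplied by \Cref{obs:K23cut} into an $H$-distance estimate: a $G$-path of length $\ell$ inside $\ParentSet{r}{S}$ costs at most $5\ell$ in $H$.

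Now I would follow the branches of \Cref{algo:embed} that decide the $H$-edges at $y$. If $F$ is connected, the algorithm joins $y$ to the chosen vertex $w\in\ParentSet{r}{S}$, and by \Cref{obs:K23cut}\ref{it:cutk23-b} there is a $(w,x)$-path inside $F$ of length at most three; the previous paragraph turns it into $\distance{H}{w}{x}\leq 15$, so $\distance{H}{x}{y}\leq 1+15=16$. This connected case attains the bound and is therefore the binding one. If instead $F$ has two components $C_1,C_2$, each a clique by \Cref{obs:K23cut}\ref{it:cutk23-c}, then $x\in C_i$ and hence $y\in D_i$ for the corresponding side. When $D_1\cap D_2=\emptyset$ the algorithm joins $y$ to $w_i\in C_i$, and as $w_i,x$ lie in the clique $C_i$ we obtain $\distance{H}{x}{y}\leq 1+5$. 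When $D_1\cap D_2\neq\emptyset$, $y$ is attached to $w_1$ or $w_2$; if this is $w_i$ we again finish within $1+5$ steps, and in the remaining situation I route through the special vertex $u\in D_1\cap D_2$, which is adjacent in $H$ to both $w_1$ and $w_2$, giving $\distance{H}{y}{w_i}\leq 3$ and $\distance{H}{x}{y}\leq 3+5=8$. Every branch is at most $16$.

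The main obstacle is not conceptual but organisational: one must correctly read off, in each branch of \Cref{algo:embed}, which special vertex of $\ParentSet{r}{S}$ the endpoint $y$ is attached to, and verify that the short $G$-path from that vertex to $x$ stays inside $\ParentSet{r}{S}$ so that \Cref{lem:edge-same-cluster} applies edge by edge. The single structural ingredient that fixes the constant is that a connected $F$ has diameter at most three (\Cref{obs:K23cut}\ref{it:cutk23-b}), which caps the connected case at three internal edges and hence at $3\cdot 5+1=16$.
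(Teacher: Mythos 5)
Your proposal is correct and takes essentially the same approach as the paper: it reduces the estimate to \Cref{lem:edge-same-cluster} applied edge by edge along short paths inside $\ParentSet{r}{S}$, using the case analysis on $\InducedSub{G}{\ParentSet{r}{S}}$ from \Cref{obs:K23cut}, with the connected case giving the binding bound $1+3\cdot 5=16$ and the two-clique cases giving $6$ and $8$. The only differences are cosmetic: the paper swaps the roles of $x$ and $y$ (taking the deeper endpoint's cluster) and routes the intersection case as $x$--$w_1$--$u$--$w_2$ followed by the $G$-edge $w_2y$, which is exactly your detour through the special vertex of $D_1\cap D_2$.
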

		\begin{proof}
			Let  $xy\in E(G)$, and without loss of generality assume that $\distance{G}{r}{y}\leq \distance{G}{r}{x}$. Let $k=\distance{G}{r}{x}$, and $S\subseteq \Sphere{k}{r}$ be the cluster containing $x$. If $\distance{G}{r}{y}=k$, then $y\in S$ and we are done by \Cref{lem:edge-same-cluster}.  Now consider the case when $\distance{G}{r}{y} = k-1$. By definition, $y\in \ParentSet{r}{S}$.  Due to \Cref{obs:K23cut}, $F\coloneq \InducedSub{G}{\ParentSet{r}{S}}$ has at most two connected components. If $F$ is connected, then let $w$ be the vertex chosen  in \Cref{line-chose-v-clique} of \Cref{algo:embed}. Note that $xw\in E(H)$. Due to \Cref{obs:K23cut}\ref{it:cutk23-b}, there is a $(y,w)$-path $P\subseteq F$ of length at most three. Applying \Cref{lem:edge-same-cluster} on the each edge of the path $P$, we have that  $\distance{H}{x}{y} \leq 16$. 
			
			Suppose $F$ has two connected components $C_1,C_2$. Due to \Cref{obs:K23cut}\ref{it:cutk23-c}, each of $C_1$ and $C_2$ are cliques in $G$. For $i\in\{1,2\}$, let $D_i\coloneq \{u\in S\colon \exists v\in C_i, uv\in E(G)\}$.  Now we have the following cases.
			
			If $D_1\cap D_2=\emptyset$, then there exists an integer $i\in \{1,2\}$ such that $x\in D_i$ and $y\in C_i$. In this case,  there exists a vertex $w\in C_i$ such that $xw\in E(H)$ (due to \Cref{line:empty-parent-1,line:empty-parent-2}). Applying  \Cref{lem:edge-same-cluster} on the edge $yw\in E(G)$, we have that $\distance{H}{x}{y}\leq 6$.
			
			Consider the case, when $D_1\cap D_2\neq \emptyset$. Without loss of generality assume that the vertices of $C_1$ are colored blue in \Cref{line:color}.  	If $y\in C_1, x\in D_1$ or  if $y\in C_2, x\in D_2\setminus D_1$, then using analogous arguments as before we have that $\distance{H}{x}{y}\leq 6$.  Now suppose $y\in C_2$ and $x\in D_1 \cap D_2$. Then, due to \Cref{line-intersection} of \Cref{algo:embed}, there exist vertices $w_1,v,w_2$ such that $\{xw_1,w_1v,vw_2\}\subseteq E(H)$ and $w_2\in C_2$. Now applying \Cref{lem:edge-same-cluster} on the edge $yw_2\in E(G)$, we have that $\distance{H}{x}{y}\leq 8$. 
		\end{proof}
		
		Note that the graph $H$ (i.e. the output of \Cref{algo:embed}) may not be a subgraph of $G$. In the following lemma, we show that the adjacent vertices  in $H$ are also close to each other in $G$.
		\begin{lemma}\label{clm:lem-G-1}
			For any $xy\in E(H)$, $\distance{G}{x}{y}\leq 4$.
		\end{lemma}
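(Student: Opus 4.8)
The plan is to verify the bound edge-by-edge, by inspecting each line of \Cref{algo:embed} that inserts an edge into $E'$. Every edge of $H$ is created while processing some cluster $S$, and each such edge either joins a vertex of $S$ to a vertex of $\ParentSet{r}{S}$ (a \emph{vertical} edge) or joins two vertices of $S$ (the single \emph{horizontal} edge possibly added in \Cref{line:empty}). Hence it suffices to fix a cluster $S$, write $F\coloneq \InducedSub{G}{\ParentSet{r}{S}}$, and bound $\distance{G}{x}{y}$ for each edge $xy$ added during the processing of $S$. Throughout I would use the basic fact that every $v\in S$ has a $G$-neighbour in $\ParentSet{r}{S}$: since $S\subseteq \Sphere{k}{r}$, each $v\in S$ has a neighbour in $\Sphere{k-1}{r}$, which by definition of the parent set lies in $\ParentSet{r}{S}$.

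First I would treat the connected case (the branch at \Cref{if-main}). The only edges inserted here are $vw$ for $v\in S$, where $w\in \ParentSet{r}{S}$ is the vertex chosen in \Cref{line-chose-v-clique}. Pick a $G$-neighbour $w'\in \ParentSet{r}{S}$ of $v$. By \Cref{obs:K23cut}\ref{it:cutk23-b}, $F$ has diameter at most three, and since $F$ is an induced subgraph of $G$ we get $\distance{G}{w'}{w}\leq \distance{F}{w'}{w}\leq 3$. The triangle inequality then yields $\distance{G}{v}{w}\leq 1+3 = 4$. This is exactly where the bound is tight, and is the main (if mild) obstacle: the chosen apex $w$ may sit at the far end of a diameter-three path of $F$ from the neighbour of $v$, so no bound better than $4$ is available from this argument.

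Next I would treat the case where $F$ has two components $C_1,C_2$, which by \Cref{obs:K23cut}\ref{it:cutk23-c} are both cliques of $G$. Every vertical edge added in this branch (in \Cref{line:empty-parent-1,line:empty-parent-2,line-same-parent-1,line-same-parent-2,line-intersection}) has the form $vw_i$ with $v\in D_i$ and $w_i\in C_i$; since $v$ has a neighbour $v'\in C_i$ and $C_i$ is a clique, $\distance{G}{v}{w_i}\leq \distance{G}{v}{v'}+\distance{G}{v'}{w_i}\leq 1+1 = 2$. In particular the edge $uw_2$ of \Cref{line-intersection} is covered, since $u\in D_1\cap D_2\subseteq D_2$ and $w_2\in C_2$. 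The one horizontal edge $uv$ possibly added in \Cref{line:empty} is by construction an actual edge of $G$, so $\distance{G}{u}{v}=1$.

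Assembling the cases, every edge of $H$ is introduced during the processing of some cluster $S$ and, by the three bounds above, its endpoints are at $G$-distance at most $4$, with the value $4$ attained only in the connected case. Since an edge may in principle be inserted while processing more than one cluster, the argument is applied to any one cluster responsible for it; in all cases the bound is the same. This proves $\distance{G}{x}{y}\leq 4$ for every $xy\in E(H)$.
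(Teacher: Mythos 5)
Your proof is correct and takes essentially the same route as the paper: the paper's proof runs the same case analysis through \Cref{obs-trivial-4} (an endpoint of the edge has a $G$-neighbour in the relevant connected component of the parent set) and then applies the diameter-three bound of \Cref{obs:K23cut}\ref{it:cutk23-b} in the connected case and the clique property of \Cref{obs:K23cut}\ref{it:cutk23-c} in the two-component case, which is exactly what your direct line-by-line inspection of \Cref{algo:embed} reconstructs. Your case bounds ($4$, $2$, and $1$ for the connected, two-component, and horizontal edges, respectively) match the paper's.
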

		\begin{proof}
			Let $xy$ be any edge of $H$. Without loss of generality assume $\distance{G}{r}{y}\leq \distance{G}{r}{x}$. If $\distance{G}{r}{x} = \distance{G}{r}{y}$, then \Cref{line:empty} implies that $xy\in E(G)$.  Hence, the lemma is true in this case. Now assume $\distance{G}{r}{y} < \distance{G}{r}{x}$. \Cref{algo:embed} ensures that $\distance{G}{r}{y} = \distance{G}{r}{x}-1$. Let $S$ be the cluster that contains $x$ and let $F\coloneq \InducedSub{G}{\ParentSet{r}{S}}$. 
			Due to \Cref{obs-trivial-4}\ref{it:parent}, $y\in V(F)$. Due to \Cref{obs-trivial-4}\ref{it:same-comp}, there exists a vertex in $z\in V(F)$ such that $xz\in E(G)$ and $y,z$ lie in the same connected component in $F$. If $F$ is connected, then due to \Cref{obs:K23cut}\ref{it:cutk23-b} we have that $\distance{G}{z}{y}\leq 3$ and, thus $\distance{G}{x}{y}\leq 4$. Otherwise, due to \Cref{obs:K23cut}\ref{it:cutk23-c}, $F$ must have two connected components both of which are cliques. This implies $yz\in E(G)$ and thus $\distance{G}{x}{y}\leq 2$. 
		\end{proof}
		
		In the next lemma, we show that if a pair of vertices lie in a $r$-rooted isometric path in $G$, then their distance in $H$ is increased by an additive factor of at most $20$. 
		\begin{lemma}\label{lem:root-path-distort}
			\sloppy 	For any $r$-rooted isometric path $P\subseteq G$ and two vertices $x,y\in V(P)$, we have $\distance{H}{x}{y} \leq \distance{G}{x}{y} + 20$.
		\end{lemma}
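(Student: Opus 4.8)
The plan is to reduce the statement to a single same-cluster estimate and then bound that estimate by a case analysis on a parent set. Since $P$ is an $r$-rooted isometric path, list its vertices as $r=p_0,p_1,\ldots,p_\ell$ with $p_i\in\Sphere{i}{r}$, and assume (after possibly swapping $x$ and $y$) that $x=p_a$ and $y=p_b$ with $a\le b$, so that $\distance{G}{x}{y}=b-a$. Writing $S_i$ for the cluster containing $p_i$, the relation $p_i\in\ParentSet{r}{S_{i+1}}\subseteq S_i$ shows that $S_a,S_{a+1},\ldots,S_b$ is a descending path in $\clusterGraphv{r}{G}$, each $S_{i+1}$ being a child of $S_i$.

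First I would fix an $H$-geodesic $Q$ from $r$ to $y$. By \Cref{lem:equidistant} we have $\distance{H}{r}{y}=b$, and every edge of $H$ joins two vertices whose $G$-distances to $r$ differ by at most one (\Cref{algo:embed} only adds edges inside a cluster or between a cluster and its parent set). Hence a geodesic of length $b$ must increase the level by exactly one at each step, so $Q$ is monotone and its level-$i$ vertex $q_i$ lies in the ancestor of $S_b$ at level $i$, which is precisely $S_i$. In particular $q_a$ and $p_a=x$ lie in the common cluster $S_a$, and $\distance{H}{q_a}{y}=b-a$. Therefore $\distance{H}{x}{y}\le\distance{H}{p_a}{q_a}+(b-a)$, and it remains to prove $\distance{H}{p_a}{q_a}\le 20$.

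Next I would analyse $F\coloneq\InducedSub{G}{\ParentSet{r}{S_{a+1}}}$, noting that $p_a\in\ParentSet{r}{S_{a+1}}$ (since $p_ap_{a+1}\in E(G)$ with $p_{a+1}\in S_{a+1}$) and $q_a\in\ParentSet{r}{S_{a+1}}$ (since $q_aq_{a+1}$ is a special-parent edge of $H$). By \Cref{obs:K23cut}, $F$ is either connected of diameter at most three, or has two clique components. If $F$ is connected, \Cref{line-chose-v-clique,line-assign-clique-edge} of \Cref{algo:embed} join every vertex of $S_{a+1}$ to one common vertex $w$, forcing $q_a=w$; a $G$-path of length at most three inside $F$, all of whose vertices lie at level $a$, connects $p_a$ to $w$, so applying \Cref{lem:edge-same-cluster} to each of its at most three edges gives $\distance{H}{p_a}{q_a}\le 15$. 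If $F$ has two clique components, then $q_a$ is one of the two chosen representatives $w_1,w_2$, while $p_a$ lies in one of the two cliques; \Cref{lem:edge-same-cluster} gives distance at most $5$ between $p_a$ and the representative of its own clique, so it suffices to bound $\distance{H}{w_1}{w_2}$. When $D_1\cap D_2\neq\emptyset$ the special vertex of \Cref{line-intersection} is adjacent in $H$ to both representatives, giving $\distance{H}{w_1}{w_2}\le 2$; when $D_1\cap D_2=\emptyset$ but some edge of $G$ joins $D_1$ and $D_2$, the edge added in \Cref{line:empty} gives $\distance{H}{w_1}{w_2}\le 3$. In all of these cases $\distance{H}{p_a}{q_a}\le 15$.

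The one remaining configuration, and the step I expect to be the main obstacle, is when $F$ has two clique components with $D_1\cap D_2=\emptyset$ and with no $G$-edge between $D_1$ and $D_2$, while $p_a$ and $q_a$ fall in different cliques. Here $S_{a+1}$ is the disjoint union $D_1\sqcup D_2$ whose two parts are joined only through vertices at levels greater than $a$, so the two representatives are connected in $H$ only through higher levels and no purely local estimate bounds $\distance{H}{w_1}{w_2}$. My plan for this case is to argue, in the spirit of the theta-extraction in the proof of \Cref{lem:cut-K23}, that a mismatch between the tracking geodesic $Q$ and the isometric path $P$ cannot persist: combining the two clique-chains on the two sides of the cut with an upper path through the parent cluster and the subpath of $P$ around level $a$ would create a long prism or a theta, which is forbidden by \Cref{prp:induced-truem}. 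Consequently the mismatch is confined to a bounded number of consecutive levels, each contributing only a constant (controlled by \Cref{clm:lem-G-1} and \Cref{lem:adj-distort}) to $\distance{H}{p_a}{q_a}$, keeping the total within the claimed additive constant. Verifying that this Truemper-configuration obstruction indeed caps the mismatch length, and bookkeeping the resulting constants so that the final bound is at most $20$, is where the real work lies.
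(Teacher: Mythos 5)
Your setup and reduction are sound, and they match the paper's mechanics: your monotone $H$-geodesic $Q$ with $q_i\in S_i$ is exactly the paper's descending path of ``special'' edges built via \Cref{obs-trivial-4}, and your local analysis at a level whose parent set is connected (cost $\leq 15$ via \Cref{obs:K23cut}\ref{it:cutk23-b} and \Cref{lem:edge-same-cluster}) or has two clique components with $D_1\cap D_2\neq\emptyset$ (cost $\leq 7$ via \Cref{line-intersection}) reproduces the paper's \Cref{clm:lem-dist-1} with the same constants. But the case you flag as the main obstacle is a genuine gap, and your proposed fix is not how it can be closed within the stated bound. In the regime where $\InducedSub{G}{\ParentSet{r}{S_{i+1}}}$ has two clique components with disjoint $D_1,D_2$, the two representatives $w_1,w_2$ can indeed be at unbounded $H$-distance (consider a long induced cycle rooted at one vertex: at each level the parent set is two singleton components, and crossing between the two arcs in $H$ costs a detour through the bottom of the cycle), so no local estimate at the single level $a$, and no ``mismatch is confined to boundedly many levels'' argument, can rescue a geodesic that has drifted into the wrong clique. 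Moreover, a Truemper-extraction as in \Cref{lem:cut-K23} could at best cap a mismatch at some unspecified constant; it cannot deliver the precise bound $20$, and you give no argument that a mismatch creates a forbidden configuration at all.

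The paper avoids this case entirely rather than bounding it, via a two-phase split with \emph{re-anchoring}: it chooses $j$ to be the \emph{minimum} level in $[\ell,k]$ at which one of your ``good'' conditions holds ($j=k$, parent set of $S_{j+1}$ connected, or two components with intersecting $D$-sets), pays the $\leq 15$ transfer cost there to land on the true path vertex $x_j$, and then \emph{restarts} the descent from $x_j$ itself. Below $j$ every parent set is in the disjoint-clique regime, and the paper's \Cref{clm:lem-dist-2} shows that a descent anchored on the path cannot drift: if $x_{i+1}\in D^1_i$ then every $S_i$-vertex $G$-adjacent to $x_{i+1}$ lies in $C^1_i$, so $x_i$ and the tracked vertex $x'_i$ (which is the representative $w_1$ chosen in \Cref{line:empty-parent-1}) lie in the same clique and are $G$-adjacent; a final application of \Cref{lem:edge-same-cluster} at the bottom level costs $\leq 5$, giving $15+5=20$. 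In short, the missing idea in your proposal is that the $\leq 15$ payment must be made at the \emph{lowest good level} $j$ with the descent re-anchored onto $P$ there --- not at the endpoint level $a$ --- after which the problematic configuration provably never arises, because disjointness of $D^1_i$ and $D^2_i$ forces component-consistency of the tracked path with $P$ at every level below $j$.
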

		
		\begin{proof}
			Let $Q$ be the $(x,y)$-subpath of $P$.	Assume that the length of $Q$ is at least two. (Otherwise, we are done by \Cref{lem:adj-distort}.) Without loss of generality assume $\distance{G}{r}{y}\leq \distance{G}{r}{x}$ and $Q\coloneq x_{\ell}x_{\ell+1}\ldots,x_k$, with $x_{\ell}\coloneq y$ and $x_k\coloneq x$.  The above definitions imply $\distance{G}{x}{y}=\distance{G}{x_k}{x_\ell}=k-\ell$. Observe that any cluster contains at most one vertex of $Q$. For $i\in [\ell,k]$, let $S_i$ denote the cluster  that contains $x_i$. Let $j\in [\ell,k]$ be the minimum integer that satisfy one of the following: 
			\begin{enumerate}[label=(\roman*)]
				\item $j=k$, 
				\item $\InducedSub{G}{\ParentSet{r}{S_{j+1}}}$ is connected,
				\item $\InducedSub{G}{\ParentSet{r}{S_{j+1}}}$ has two connected components $C_1,C_2$ such that the sets  $D_1\coloneq \{u\in S_{j+1}\colon \exists v\in C_1, uv\in E(G)\} $ and   $D_2\coloneq \{u\in S_{j+1}\colon \exists v\in C_2, uv\in E(G)\}$ have a common vertex.
			\end{enumerate}    
			We prove the following claim.
			\begin{claim}\label{clm:lem-dist-1}
				$\distance{H}{x_k}{x_j} \leq k-j+15$.
			\end{claim}
			\begin{subproof}
				If $k=j$, then the claim is trivially true. So we assume $j<k$. Let $x'_k\coloneq x_k$ and for each $i\in [j,k)$, let $x'_i\in S_i$ denote a vertex such that $x'_ix'_{i+1}\in E(H)$. Let $R\coloneq x'_kx'_{k-1}\ldots x'_j$. If $\InducedSub{G}{\ParentSet{r}{S_{j+1}}}$ is connected, then due to \Cref{obs:K23cut}\ref{it:cutk23-b}, there is a $(x'_j,x_j)$-induced path $Q'\subseteq F$ of length at most three in $G$. Applying  \Cref{lem:edge-same-cluster}, on each edge of $Q'$ we have that $\distance{H}{x'_j}{x_j} \leq 15$ and hence $\distance{H}{x_k}{x_j}\leq k-j+15$.

				Now suppose $\InducedSub{G}{\ParentSet{r}{S_{j+1}}}$ has two connected components $C_1,C_2$ such that the sets  $D_1\coloneq \{u\in S_{j+1}\colon \exists v\in C_1, uv\in E(G)\} $ and   $D_2\coloneq \{u\in S_{j+1}\colon \exists v\in C_2, uv\in E(G)\}$ have a common vertex.   	If there exists $i\in \{1,2\}$ with $x_j,x'_j\in C_i$, then applying \Cref{lem:edge-same-cluster} on the edge $x_jx'_j\in E(G)$, we have $\distance{G}{x'_j}{x_j}\leq 5$.
				Since $D_1\cap D_2\neq \emptyset$, due to \Cref{line-intersection}, there exist $u\in S_{j+1}$ and $v\in S_j$ such that $\{x'_ju,uv \}\subseteq E(H)$ and $vx_j\in E(G)$. Since $\{v,x_j\}\subseteq S_j$, \Cref{lem:edge-same-cluster} implies $\distance{H}{x'_j}{x_j}\leq 7$. Hence, $\distance{H}{x_k}{x_j}\leq k-j+7$. 
			\end{subproof}
			
			Let $x'_j=x_j$ and for each $i\in [\ell,j-1]$, let $x'_i$ be a neighbor of $x'_{i+1}$ in $H$. We prove the following claim. 
			
			\begin{claim}\label{clm:lem-dist-2}
				For each $i\in [\ell,j-1]$, $x_ix'_{i}\in E(G)$. 
			\end{claim}
			\begin{subproof}
				For each $i\in [\ell,j-1]$, let the two connected components of $\InducedSub{G}{\ParentSet{r}{S_{i+1}}}$ are $C^1_i$ and $C^2_i$. Due to \Cref{prp:ind2}, both  $C^1_i$ and $C^2_i$ are cliques in $G$. For each $t\in \{1,2\}$, let $D^t_i\coloneq \{u\in S_{i+1}\colon \exists v\in C^t_i, uv\in E(G)\}$. By definition, $D^1_i\cap D^2_i=\emptyset$. Now without loss of generality assume that $x_{i+1}\in D^1_i$. Therefore, any vertex in $S_i$ which is adjacent to $x_{i+1}$ in $G$ must lie in $C^1_i$. Therefore, $x_i\in C^1_i$.   Now due to \Cref{line:empty-parent-1}, $x'_i\in C^1_i$. Since, $C^1_i$ is a clique, we have that   $x_ix'_{i}\in E(G)$. 
			\end{subproof}
			
			Let $R_1$ be a $(x_k,x_j)$-isometric path in $H$, define $R_2\coloneq x'_jx'_{j-1}\ldots x'_{\ell}$  and $R_3$ be a $(x'_\ell,x_\ell)$-isometric path in $H$. Due to \Cref{clm:lem-dist-1}, length of $R_1$ is at most $k-j+15$. Length of $R_2$ is at most $j-\ell$. Due to \Cref{clm:lem-dist-2} and \Cref{lem:edge-same-cluster}, length of $R_3$ is at most $5$. Hence, the length of the path $R_1\cup R_2\cup R_3$ is at most $k-\ell+20$. Hence $\distance{H}{x}{y} \leq \distance{G}{x}{y} + 20$.
		\end{proof}
		
		In the next lemma, we show that if a pair of vertices lie in a $r$-rooted isometric path in $H$, then their distance in $G$ is decreased by an additive factor of at most five. The proof is similar to that of \Cref{lem:root-path-distort}.
		
		\begin{lemma}\label{clm:lem-G-2}
			For any $r$-rooted isometric path $P\subseteq H$, and two vertices $x,y\in V(P)$, $\distance{G}{u}{v} - 5 \leq \distance{H}{u}{v}$.
		\end{lemma}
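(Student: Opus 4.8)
The plan is to mirror the proof of \Cref{lem:root-path-distort}, but with the roles of $G$ and $H$ exchanged: rather than shadowing a $G$-path by an $H$-path, I would shadow the given $r$-rooted $H$-isometric path by a $G$-walk of nearly the same length. First I would pass to the $(x,y)$-subpath $Q\coloneq x_\ell x_{\ell+1}\ldots x_k$ of $P$ with $x_\ell\coloneq y$, $x_k\coloneq x$ and, without loss of generality, $\distance{H}{r}{y}\le \distance{H}{r}{x}$. Since $P$ is $r$-rooted and isometric in $H$, \Cref{lem:equidistant} gives $\distance{G}{r}{x_i}=\distance{H}{r}{x_i}=i$ and $k-\ell=\distance{H}{x}{y}$. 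Writing $S_i$ for the cluster containing $x_i$, \Cref{obs-trivial-4}\ref{it:parent} applied to each edge $x_ix_{i+1}\in E(H)$ shows $x_i\in \ParentSet{r}{S_{i+1}}$, so $S_\ell,\ldots,S_k$ is a descending path in $\clusterGraphv{r}{G}$ with $S_i$ the parent of $S_{i+1}$. The goal is to produce an $(x,y)$-walk in $G$ of length at most $(k-\ell)+5$.

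The crucial point is that descending one layer in $G$ costs exactly one edge: every vertex of a cluster has a $G$-neighbour in its parent set, so starting from $z_k\coloneq x_k$ I can greedily pick, for decreasing $i$, a vertex $z_i\in \ParentSet{r}{S_{i+1}}\subseteq S_i$ with $z_iz_{i+1}\in E(G)$. As in \Cref{lem:root-path-distort}, let $j\in[\ell,k]$ be the minimum index such that $j=k$, or $\InducedSub{G}{\ParentSet{r}{S_{j+1}}}$ is connected, or $\InducedSub{G}{\ParentSet{r}{S_{j+1}}}$ has two components $C_1,C_2$ whose down-sets $D_1,D_2$ share a vertex; then for every $i<j$ the graph $\InducedSub{G}{\ParentSet{r}{S_{i+1}}}$ is two disjoint cliques with $D^1_i\cap D^2_i=\emptyset$. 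For the top part I run the greedy descent from $x_k$ down to $z_j\in \ParentSet{r}{S_{j+1}}$, a $G$-path of length $k-j$. When $j<k$ we also have $x_j\in \ParentSet{r}{S_{j+1}}$, so I bridge $z_j$ to $x_j$ inside $\InducedSub{G}{\ParentSet{r}{S_{j+1}}}$: by \Cref{obs:K23cut}\ref{it:cutk23-b} the connected case costs at most $3$, while in the two-component case I route through the shared down-neighbour, using that each component is a clique by \Cref{obs:K23cut}\ref{it:cutk23-c}, for a cost of at most $4$.

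For the bottom part I build a $G$-path $x'_jx'_{j-1}\ldots x'_\ell$ of length $j-\ell$ by setting $x'_j\coloneq x_j$ and letting $x'_i$ be a $G$-neighbour of $x'_{i+1}$ in $S_i$. Mirroring \Cref{clm:lem-dist-2}, I would show by downward induction that $x_i$ and $x'_i$ lie in the same clique component of $\InducedSub{G}{\ParentSet{r}{S_{i+1}}}$, whence $x_ix'_i\in E(G)$; in particular $x_\ell x'_\ell\in E(G)$. Concatenating the greedy descent (length $k-j$), the top bridge ($\le 4$), the path $x'_j\ldots x'_\ell$ (length $j-\ell$), and the single edge $x'_\ell x_\ell$ then gives an $(x,y)$-walk in $G$ of length at most $(k-j)+4+(j-\ell)+1=(k-\ell)+5=\distance{H}{x}{y}+5$, which is exactly $\distance{G}{x}{y}-5\le\distance{H}{x}{y}$.

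The hard part is the alignment induction in the bottom regime: to conclude $x_ix'_i\in E(G)$ I must know that $x_{i+1}$ and $x'_{i+1}$ activate the \emph{same} clique component of $\InducedSub{G}{\ParentSet{r}{S_{i+1}}}$, i.e.\ lie in the same down-set $D^t_i$. The base case $x'_j=x_j$ is immediate, but the inductive step requires showing that two vertices of $S_{i+1}$ which are $G$-adjacent and both continue downward cannot activate different disjoint-clique components at the next layer; were this to happen, combining an upper path with the two induced paths descending from the two components would expose a theta, contradicting \Cref{prp:induced-truem}. This is precisely the theta-avoidance mechanism already exploited in \Cref{clm:lem-dist-2} and \Cref{lem:cut-K23}, so I expect the same argument to close the induction and pin the additive constant at exactly $5$.
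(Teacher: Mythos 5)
Your proposal is correct and follows the paper's proof of \Cref{clm:lem-G-2} essentially step for step: the same subpath $Q=x_\ell\ldots x_k$ with one vertex per cluster, the same minimal index $j$ with the identical three stopping conditions, the same top-segment bound $\distance{G}{x_k}{x_j}\leq k-j+4$ (a diameter-three bridge inside the connected parent set, or a route through a vertex of $D_1\cap D_2$ using that both components are cliques), the same alignment claim $x_ix'_i\in E(G)$ proved by downward induction through the disjoint clique components, and the same assembly yielding the additive constant $5$. The ``hard part'' you single out is precisely the step the paper compresses into ``Observe that $x'_{i+1}\in D^1_i$'' in Claim~\ref{clm:lem-G-2-2}, so your sketched theta-avoidance justification for it is consistent with, and if anything more explicit than, the paper's own treatment.
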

		\begin{proof}
			Without loss of generality, assume that $\distance{H}{r}{y} \leq \distance{H}{r}{x}$.  If $xy\in E(H)$, then we are done by \Cref{clm:lem-G-1}. Otherwise, let $Q$ be the $(x,y)$-subpath of $P$. Let $k\coloneq \distance{H}{r}{x}$, $\ell\coloneq \distance{H}{r}{y}$, $x_k \coloneq x$, $x_\ell=y$ and $Q\coloneq x_kx_{k-1}\ldots x_\ell$. Observe that for $i\in [\ell,k]$, $\distance{H}{r}{x_i}=i$. Due to \Cref{lem:equidistant}, $\distance{G}{r}{x_i}=i$. Therefore, any cluster contains at most one vertex of $Q$. For $i\in [\ell,k]$, let $S_i$ be the cluster that contains $x_i$. 
			Due to \Cref{obs-trivial-4}\ref{it:parent}, for $i\in [\ell+1,k]$, $x_{i-1}\in \ParentSet{r}{S_{i}}$. Let $j\in [\ell,k]$ be the minimum integer that satisfy the following: 
			
			\begin{enumerate}[label=(\roman*)]
				\item $j=k$, 
				\item $\InducedSub{G}{\ParentSet{r}{S_{j+1}}}$ is connected,
				\item $\InducedSub{G}{\ParentSet{r}{S_{j+1}}}$ has two connected components $C_1,C_2$ such that the sets  $D_1\coloneq \{u\in S_{j+1}\colon \exists v\in C_1, uv\in E(G)\} $ and   $D_2\coloneq \{u\in S_{j+1}\colon \exists v\in C_2, uv\in E(G)\}$ have a common vertex.
			\end{enumerate}    
			We prove the following claim.
			\begin{claim}\label{clm:lem-G-2-1}
				$\distance{G}{x_k}{x_j} \leq k-j+4$.
			\end{claim}
			\begin{subproof}
				If $k=j$, then the claim is trivially true. So we assume $j<k$. Let $x'_k\coloneq x_k$ and for each $i\in [k-1,j]$, let $x'_i\in S_i$ denote a vertex such that $x'_ix'_{i+1}\in E(G)$. Let $R\coloneq x'_kx'_{k-1}\ldots x'_j$. If $\InducedSub{G}{\ParentSet{r}{S_{j+1}}}$ is connected, then due to \Cref{obs:K23cut}\ref{it:cutk23-b}, there is a $(x'_j,x_j)$-induced path $Q'\subseteq F$ of length at most three in $G$. Hence, $\distance{G}{x_k}{x_j}\leq k-j+4$. 
				
				Now suppose $\InducedSub{G}{\ParentSet{r}{S_{j+1}}}$ has two connected components $C_1,C_2$ such that the sets  $D_1\coloneq \{u\in S_{j+1}\colon \exists v\in C_1, uv\in E(G)\} $ and   $D_2\coloneq \{u\in S_{j+1}\colon \exists v\in C_2, uv\in E(G)\}$ have a common vertex.   	
				Since $D_1\cap D_2\neq \emptyset$, due to \Cref{line-intersection} of \Cref{algo:embed}, there exist $u\in S_{j+1}$ and $w,v\in S_j$ such that $\{x'_jw,wu,uv,vx_j \}\subseteq E(G)$. Hence, $\distance{H}{x'_j}{x_j}\leq 4$ and thus, $\distance{H}{x_k}{x_j}\leq k-j+4$. 
				
			\end{subproof}
			
			Let $x'_j=x_j$ and for each $i\in [\ell,j-1]$, let $x'_i\in \ParentSet{r}{S_{i+1}}$ be a vertex such that $x'_ix'_{i+1}\in E(G)$. We prove the following claim. 
			
			\begin{claim}\label{clm:lem-G-2-2}
				For each $i\in [\ell,j-1]$, $x_ix'_{i}\in E(G)$. 
			\end{claim}
			\begin{subproof}
				For each $i\in [\ell,j-1]$, let the two connected components of $\InducedSub{G}{\ParentSet{r}{S_{i+1}}}$ are $C^1_i$ and $C^2_i$. Due to \Cref{prp:ind2}, both  $C^1_i$ and $C^2_i$ are cliques in $G$. For each $t\in \{1,2\}$, let $D^t_i\coloneq \{u\in S_{i+1}\colon \exists v\in C^t_i, uv\in E(G)\}$. By definition, $D^1_i\cap D^2_i=\emptyset$. 
				Without loss of generality assume that $x_{i+1}\in D^1_i$. \Cref{algo:embed} ensures that $x_i\in C^1_i$. Observe that $x'_{i+1}\in D^1_i$. Moreover, any vertex in $S_i$ which is adjacent to $x'_{i+1}$ in $G$ must lie in $C^1_i$. Therefore, $x'_i\in C^1_i$. Since $C^1_i$ is a clique, we have that   $x_ix'_{i}\in E(G)$. 
			\end{subproof}
			
			Let $R_1$ be a $(x_k,x_j)$-isometric path in $G$, and $R_2\coloneq x'_jx'_{j-1}\ldots x'_{\ell}$. Due to \Cref{clm:lem-G-2-1}, length of $R_1$ is at most $k-j+4$. Length of $R_2$ is $j-\ell$. Due to \Cref{clm:lem-G-2-2}, $x_\ell x'_\ell \in E(G)$. Hence $\distance{G}{x}{y} \leq \distance{H}{x}{y} + 5$.
		\end{proof}
		\subsection{Induced cycles in $H$}\label{sec:cycle}
		
		The main purpose of this section is to prove that any cluster of $G$ contains at most two vertices of an induced cycle in $H$ (see \Cref{lem:cycle}).
		We start by proving the following.
		
		\begin{lemma}\label{lem:same-cluster}
			Let $C$ be an induced cycle in $H$. For $k\geq 1$, all vertices in $V(C)\cap \Sphere{k}{r}$ lies in the same cluster. 
		\end{lemma}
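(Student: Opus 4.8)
The plan is to argue by contradiction: suppose some sphere $\Sphere{k}{r}$ contains two vertices of $C$ that lie in \emph{different} clusters, and derive a forbidden configuration in $H$ or a violation of one of the established observations. First I would note that, since $C$ is an induced cycle, every vertex of $C$ in $\Sphere{k}{r}$ has its two $C$-neighbours either in the same sphere or in an adjacent sphere $\Sphere{k\pm 1}{r}$; by \Cref{lem:equidistant}, distances to $r$ in $H$ coincide with those in $G$, so the layering structure of $G$ governs the cycle $C$ in $H$. The key reduction is to focus on the clusters at the \emph{deepest} level $k$ that $C$ visits in two distinct clusters, and to consider how $C$ enters and leaves those clusters.

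The central tool will be \Cref{obs:least-common}: for any $(u,v)$-path $P$ in $H$ with $S_u,S_v$ the clusters of $u,v$, the path must pass through (a vertex of) the least common ancestor $S$ of $S_u$ and $S_v$ in the layering tree $\clusterGraphv{r}{G}$. The idea is that if $a,b\in V(C)\cap \Sphere{k}{r}$ lie in distinct clusters $S_a\neq S_b$, then $C$ consists of two internally disjoint $(a,b)$-paths in $H$, and each of these paths must independently reach the least common ancestor $S$ of $S_a$ and $S_b$. Since $S$ sits strictly above level $k$ (as $S_a, S_b$ are distinct clusters both at level $k$, their lca is a proper ancestor), both arcs of $C$ must climb to $S$ and come back down. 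I would then use \Cref{obs-trivial-2} (each cluster $S$ has at most two vertices of $\ParentSet{r}{S}$ used by $H$-edges, and if exactly two then $\InducedSub{G}{\ParentSet{r}{S}}$ splits into two components) together with \Cref{obs-trivial-3} (at most one vertex of $S$ is joined to two vertices of its parent set) to bound how the two arcs can be routed, forcing them to share an internal vertex or an edge and thereby contradicting that $C$ is a cycle (or contradicting that it is induced).

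I expect the main obstacle to be the bookkeeping of exactly \emph{where} the two arcs of $C$ re-merge as they ascend toward $S$, and ruling out the case where the two arcs stay within the same two-component parent structure for several consecutive levels. Concretely, the hard part is showing that two vertices of $C$ in the same sphere but in distinct clusters cannot be ``reconciled'' by the at-most-two parent vertices permitted at each level: I would track, level by level from $k$ upward, the (at most two) parent vertices through which each arc passes, and use \Cref{obs-trivial-2}\ref{it:parent-stable} and \Cref{lem:cut-K23} (which limits how the two-component parent sets of sibling clusters can overlap) to show the two arcs are forced to coincide before reaching $S$, contradicting the internal disjointness of the two arcs of an induced cycle. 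Establishing this ``forced coincidence'' cleanly — rather than through a long case analysis — is the crux, and I would look for an invariant (such as the pair of active parent vertices at each level) that collapses to a single vertex, yielding the contradiction.
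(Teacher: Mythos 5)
Your setup coincides with the paper's opening moves: assume $u_1,u_2\in V(C)\cap \Sphere{k}{r}$ lie in distinct clusters $S_1,S_2$, take their least common ancestor $S$ in $\clusterGraphv{r}{G}$ (correctly noted to sit strictly above level $k$), and apply \Cref{obs:least-common} to the two internally disjoint arcs of $C$, so that each arc meets $S$ and internal disjointness yields two \emph{distinct} vertices $v_1,v_2\in S\cap V(C)$. You also name the right auxiliary results (\Cref{obs-trivial-2}, \Cref{lem:cut-K23}). But the step you yourself flag as the crux --- a level-by-level tracking argument from level $k$ up to $S$ that forces the two arcs to ``coincide'' in a shared vertex or edge --- is both missing and aimed at the wrong target. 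Nothing forces coincidence: an induced cycle of $H$ can perfectly well run its two arcs in parallel through the same sequence of clusters for many consecutive levels, each cluster contributing one vertex to each arc via the two distinct permitted parent vertices (this is precisely the configuration that \Cref{lem:cycle} later shows is the \emph{only} one possible, with $|S\cap V(C)|\leq 2$, not $\leq 1$). So your hoped-for invariant, ``the pair of active parent vertices collapses to a single vertex,'' is false at every level below the lca, and no contradiction with $C$ being an induced cycle materializes along the climb.

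The actual contradiction is local at the lca and requires no induction at all. Let $S'_1,S'_2$ be the two distinct children of $S$ on the tree paths toward $S_1$ and $S_2$. Since every $H$-edge leaving the subtree rooted at $S'_j$ lands in $\ParentSet{r}{S'_j}\subseteq S$, each arc crosses from the $S'_1$-side to the $S'_2$-side through its vertex $v_i\in S$, which therefore has $C$-neighbors in both $S'_1$ and $S'_2$. Thus $v_1\neq v_2$ are two vertices of $\ParentSet{r}{S'_1}$ (and likewise of $\ParentSet{r}{S'_2}$) incident to $H$-edges into that child, so \Cref{obs-trivial-2}\ref{it:parent-stable} forces $\InducedSub{G}{\ParentSet{r}{S'_1}}$ and $\InducedSub{G}{\ParentSet{r}{S'_2}}$ each to have two connected components, with $v_1$ and $v_2$ in different components of each. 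That places $v_1$ in one overlapping pair of components and $v_2$ in a second, i.e. $\dmod{\mathcal{X}}\geq 2$ in the notation of \Cref{lem:cut-K23} applied to the siblings $S'_1,S'_2$ --- contradicting that lemma outright. In other words, \Cref{lem:cut-K23} is not, as in your plan, a tool for constraining intermediate levels until the arcs merge; it \emph{is} the terminal contradiction, applied exactly once at the lca. As written, your proposal stalls precisely where this single application should go.
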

		\begin{proof}
			Assume there exists an integer $k$ and two vertices $u_1,u_2\in V(C)\cap \Sphere{k}{r}$ such that $u_1,u_2$ lie in different clusters $S_1,S_2$ respectively. Let $S$ be the least common ancestor of $S_1$ and $S_2$ in $\clusterGraphv{r}{G}$. Observe that $S,S_1,S_2$ are all distinct clusters. For $i\in \{1,2\}$, let $S'_i$ denote the cluster which is a child of $S$ and lie in the $(S,S_i)$-path in $\clusterGraphv{r}{G}$. Observe that $S,S'_1,S'_2$ are distinct clusters. Since there are two $(u_1,u_2)$-paths in $C$ (and thus in $H$) which are internally vertex disjoint, due to \Cref{obs:least-common}, $S$ must contain two vertices $v_1,v_2\in V(C)$. Observe that for $i\in\{1,2\},j\in\{1,2\}$, $v_i$ must have a neighbor in $C$ (and thus in $H$) that lies in $S'_j$. Now applying \Cref{obs-trivial-2}\ref{it:parent-stable} on $S'_1$, we have that $\InducedSub{G}{\ParentSet{r}{S'_1}}$ has two connected components $C_1,D_1$. Similarly, applying \Cref{obs-trivial-2}\ref{it:parent-stable} on $S'_2$, we have that $\InducedSub{G}{\ParentSet{r}{S'_2}}$ has two connected components $C_2,D_2$. Without loss of generality, $v_1\in C_1\cap C_2$ and $v_2\in D_1\cap D_2$. But this contradicts \Cref{lem:cut-K23}.
		\end{proof}
		
		\begin{lemma}\label{lem:cycle-end}
			Let $C$ be an induced cycle in $H$, $k=\max\{\distance{H}{r}{v}\colon v\in V(C)\}$ and $\ell=\distance{H}{r}{V(C)}$. Then,  $(a)~|\Sphere{k}{r}\cap V(C)|\leq 2$, and $(b)~|\Sphere{\ell}{r} \cap V(C)|\leq 2$. 
		\end{lemma}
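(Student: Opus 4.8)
The plan is to treat both parts by the same two-layer analysis: first pin down the layer in question to a single cluster via \Cref{lem:same-cluster}, and then bound the number of cycle vertices in that cluster using the degree-$2$ constraint of an induced cycle together with the observations limiting horizontal edges and parent-adjacencies. Throughout I will use the basic fact that every edge of $H$ joins two vertices whose distance from $r$ differs by at most one: by \Cref{algo:embed} an edge of $H$ is either internal to a cluster (equal distances from $r$) or joins a cluster $S$ to a vertex of $\ParentSet{r}{S}$ (distances differing by exactly one), and by \Cref{lem:equidistant} these are also the $H$-distances from $r$.

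For part $(a)$, let $V^\ast \coloneqq V(C)\cap \Sphere{k}{r}$; by \Cref{lem:same-cluster} all of $V^\ast$ lies in a single cluster $S$. Since $k$ is the maximum distance attained on $C$, no $C$-neighbor of a vertex of $V^\ast$ is at distance $k+1$, so each $v\in V^\ast$ has its two $C$-neighbors either in $S$ (distance $k$, hence again in $V^\ast$, joined to $v$ by a horizontal edge of $S$) or in $W\coloneqq \{w\in \ParentSet{r}{S}\colon \exists u\in S,\ uw\in E(H)\}$ (distance $k-1$). By \Cref{obs-trivial-2}\ref{it:parent-deg} we have $|W|\le 2$, and by \Cref{obs-trivial-1}\ref{it:horizontal-a} there is at most one horizontal edge inside $S$. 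Suppose $|V^\ast|=m\ge 3$. The total $C$-degree of $V^\ast$ is $2m$; horizontal edges account for at most $2$ of it, so at least $2m-2\ge 4$ of these edge-endpoints lie in $W$. As each vertex of $W$ lies on at most two edges of $C$, there can be at most $2|W|\le 4$ such endpoints; hence $m=3$, $|W|=2$, and there is exactly one horizontal edge, say $v_1v_2$. Then $v_3$ cannot be adjacent to $v_1$ or $v_2$ (that would be a second horizontal edge), so both $C$-neighbors of $v_3$ lie in $W=\{w_1,w_2\}$; thus $v_3\in S$ is adjacent in $H$ to two vertices of $\ParentSet{r}{S}$, contradicting \Cref{obs-trivial-1}\ref{it:horizontal-b}, which asserts that in the presence of a horizontal edge in $S$ no vertex of $S$ is adjacent to two vertices of $\ParentSet{r}{S}$. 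Hence $m\le 2$.

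For part $(b)$, let $U\coloneqq V(C)\cap \Sphere{\ell}{r}$, which by \Cref{lem:same-cluster} lies in a single cluster $S^\ast$. Since $\ell$ is the minimum distance on $C$, no $C$-neighbor of a vertex of $U$ is at distance $\ell-1$, so each $u\in U$ has its two neighbors either in $U$ (distance $\ell$, joined by a horizontal edge) or at distance $\ell+1$. By \Cref{obs-trivial-1}\ref{it:horizontal-a} there is at most one horizontal edge in $S^\ast$, so each $u\in U$ has at most one distance-$\ell$ neighbor and therefore at least one neighbor at distance $\ell+1$. The crucial point is that, applying \Cref{lem:same-cluster} to the sphere $\Sphere{\ell+1}{r}$, all distance-$(\ell+1)$ vertices of $C$ lie in a \emph{single} cluster $S^{\ast\ast}$, which is a child of $S^\ast$. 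Consequently every $u\in U$ is adjacent in $H$ to some vertex of $S^{\ast\ast}$, so $U\subseteq W^\ast\coloneqq\{w\in \ParentSet{r}{S^{\ast\ast}}\colon \exists x\in S^{\ast\ast},\ wx\in E(H)\}$. By \Cref{obs-trivial-2}\ref{it:parent-deg} applied to the cluster $S^{\ast\ast}$ we have $|W^\ast|\le 2$, whence $|U|\le 2$.

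The main obstacle is part $(b)$: a priori a minimum-layer vertex could send ``down'' edges into many different child clusters, so the endpoint-counting of part $(a)$ does not transfer directly. The resolution is that \Cref{lem:same-cluster} forces all distance-$(\ell+1)$ cycle vertices into one child cluster $S^{\ast\ast}$; this collapses the down-adjacencies into adjacencies to a single cluster and lets \Cref{obs-trivial-2}\ref{it:parent-deg} cap the count at two, making part $(b)$ in fact shorter than part $(a)$.
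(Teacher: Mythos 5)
Your proof is correct, and its skeleton is the paper's: use \Cref{lem:same-cluster} to confine each extremal layer of $C$ to a single cluster, then bound the cycle vertices there via the degree observations. For part $(b)$ your argument is essentially identical to the paper's: at most one horizontal edge in $S^\ast$ by \Cref{obs-trivial-1}\ref{it:horizontal-a} forces every minimum-layer cycle vertex to have a $C$-neighbour one layer down, \Cref{lem:same-cluster} collapses those neighbours into one child cluster, and \Cref{obs-trivial-2}\ref{it:parent-deg} applied to that child caps the count at two (the paper phrases this as a contradiction with three vertices $a,b,c$, you state it directly as $U\subseteq W^\ast$; same content). For part $(a)$ you genuinely diverge: the paper disposes of it with the single sentence that it ``follows from \Cref{obs-trivial-3}'', whereas you run an endpoint count against $|W|\le 2$ and finish the residual $m=3$ configuration with the $F'=\emptyset$ clause of \Cref{obs-trivial-1}\ref{it:horizontal-b}. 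This divergence is, if anything, to your credit: \Cref{obs-trivial-3} alone does not exclude the configuration of one horizontal edge $v_1v_2$ plus a third vertex $v_3$ whose two $C$-neighbours both lie in $\ParentSet{r}{S}$ --- there the set of \Cref{obs-trivial-3} has size exactly one, which that observation permits --- so the paper's one-line citation implicitly relies on \Cref{obs-trivial-1}\ref{it:horizontal-b}, which you invoke explicitly. Your counting through $2|W|\le 4$ is slightly more machinery than necessary (with at most one horizontal edge, three vertices in $S\cap V(C)$ immediately yield either three members of the \Cref{obs-trivial-3} set or a vertex violating \Cref{obs-trivial-1}\ref{it:horizontal-b}), but every step, including the use of \Cref{obs-trivial-4}\ref{it:parent} to place down-neighbours in $\ParentSet{r}{S}$ and the fact that same-layer edges of $H$ are intra-cluster, is sound.
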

		\begin{proof}
			Due to \Cref{lem:equidistant}, $k=\max\{\distance{G}{r}{v}\colon v\in V(C)\}$ and $\ell=\distance{G}{r}{V(C)}$. Let $S_k\subseteq \Sphere{k}{r}$ and $S_\ell\subseteq \Sphere{\ell}{r}$ denote the clusters that contain some vertices of $C$.  Due to \Cref{lem:same-cluster}, $V(C)\cap \Sphere{k}{r}\subseteq S_k$ and $V(C)\cap \Sphere{\ell}{r}\subseteq S_\ell$. Now,  $(a)$ follows from \Cref{obs-trivial-3}. 
			
			To prove $(b)$, assume $\{a,b,c\}\subseteq S_\ell\cap V(C)$ and $F=\{uv\in E(H)\colon \{u,v\}\subseteq \{a,b,c\}\}$. By \Cref{obs-trivial-1}\ref{it:horizontal-a}, $|F|\leq 1$. Due to \Cref{lem:same-cluster}, there exists a child $S'$ of $S_\ell$ in $\clusterGraphv{r}{G}$ such that for each $d\in \{a,b,c\}$ there exists $d'\in S'$ such that $dd'\in E(H)$. This contradicts \Cref{obs-trivial-2}\ref{it:parent-deg}.
		\end{proof}

		Now we are ready to prove the main lemma of this section.
		
		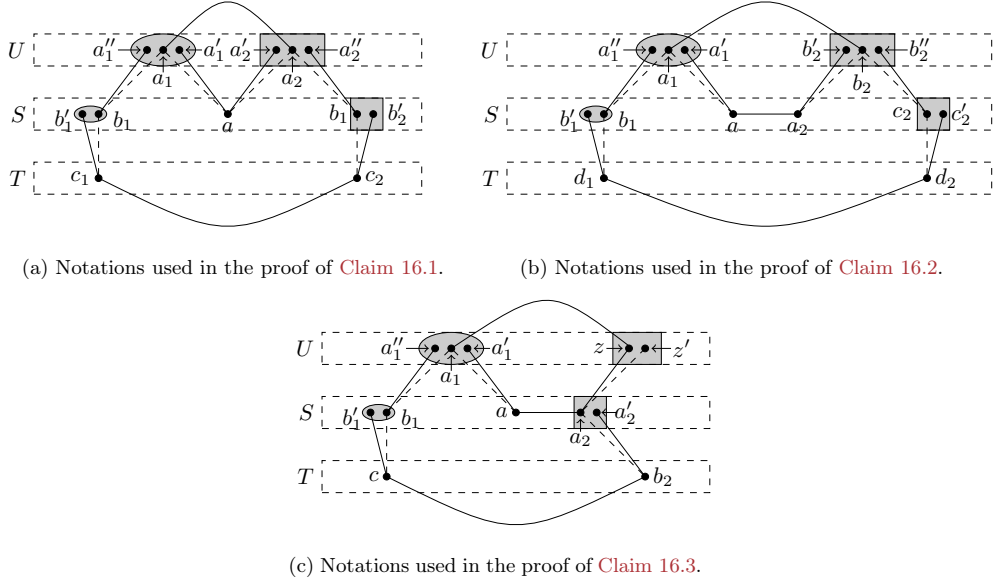
\begin{figure}
			\centering
			\scalebox{0.85}{
				\begin{subfigure}{0.45\textwidth}
					\begin{tikzpicture}
						\filldraw[fill=gray!40,opacity=0.5] (-1,1) ellipse (0.5cm and 0.25cm);
						\filldraw[fill=gray!40,opacity=0.5] (0.5,0.75) rectangle (1.5,1.25);
						\filldraw[fill=gray!40,opacity=0.5] (-2.125,0) ellipse (0.25cm and 0.125cm);
						\filldraw[fill=gray!40,opacity=0.5] (1.9,-0.25) rectangle (2.4,0.25);
						
						\foreach \x/\y [count = \n] in {0/0,-1/1,1/1, 2/0, -2/0, -2/-1, 2/-1,1.25/1, -1.25/1, 1.25/1, -0.75/1,0.75/1, -2.25/0,2.25/0 }
						{
							\filldraw (\x, \y) circle (1.5pt);
							
						}
						\foreach \x/\y/\w/\z [count = \n] in {0/0/-1/1,1/1/0/0, 2/0/1/1, -2/0/-1/1, -2/0/-2/-1, 2/0/2/-1}
						{
							\draw[dashed] (\x, \y) -- (\w,\z);
							
						}
						
						\foreach \x/\y/\w/\z [count = \n] in {0/0/-0.75/1,0.75/1/0/0, 2/0/1.25/1, -2/0/-1.25/1, -2.25/0/-2/-1, 2.25/0/2/-1}
						{
							\draw (\x, \y) -- (\w,\z);
							
						}
						\draw (-2,-1)  ..controls (0,-2) ..  (2,-1);
						\draw (-1,1) ..controls (0,2) .. (1,1);
						
						\node[below] at (0,0) {$a$};
						\node[below] at (-1,0.75) {$a_1$}; \draw [->] (-1,0.65) -- (-1,0.9);
						\node[right] at (-0.5,1) {$a'_1$}; \draw [<-] (-0.65,1) -- (-0.4,1);
						\node[left] at (-1.6,1) {$a''_1$}; \draw [->] (-1.7,1) -- (-1.35,1);
						
						\node[below] at (1,0.75) {$a_2$}; \draw [->] (1,0.65) -- (1,0.9);
						\node[left] at (0.5,1) {$a'_2$}; \draw [<-] (0.65,1) -- (0.4,1);
						\node[right] at (1.6,1) {$a''_2$}; \draw [->] (1.7,1) -- (1.35,1);
						
						\node[right] at (-1.9,-0.1) {$b_1$}; \node[left] at (-2.25,-0.1) {$b'_1$}; 
						
						\node[right] at (2.35,0) {$b'_2$}; \node[left] at (2,0) {$b_1$}; 
						
						\node[left] at (-2,-1) {$c_1$}; \node[right] at (2,-1) {$c_2$}; 
						
						\draw[dashed] (-3,0.75) rectangle (3,1.25);
						\draw[dashed] (-3,-0.25) rectangle (3,0.25);
						\draw[dashed] (-3,-1.25) rectangle (3,-0.75);
						\node[left] at (-3,1) {$U$}; 
						\node[left] at (-3,0) {$S$}; 
						\node[left] at (-3,-1) {$T$}; 
					\end{tikzpicture}
					\caption{Notations used  in the proof of  \Cref{clm:9.2}.}\label{subfig:case-1}
				\end{subfigure} 
				\begin{subfigure}{0.5\textwidth}
					\begin{tikzpicture}
						\filldraw[fill=gray!40,opacity=0.5] (-1,1) ellipse (0.5cm and 0.25cm);
						\filldraw[fill=gray!40,opacity=0.5] (1.5,0.75) rectangle (2.5,1.25);
						\filldraw[fill=gray!40,opacity=0.5] (-2.125,0) ellipse (0.25cm and 0.125cm);
						\filldraw[fill=gray!40,opacity=0.5] (2.85,-0.25) rectangle (3.35,0.25); 
						\foreach \x/\y [count = \n] in {0/0,-1/1, -1.25/1, -0.75/1,  1/0, 2.25/1, 1.75/1, 2/1, -2/0, -2.25/0, 3/0,  -2/-1, 3/-1, 3.25/0 }
						{
							\filldraw (\x, \y) circle (1.5pt);
							
						}
						\foreach \x/\y/\w/\z [count = \n] in {0/0/-1/1,2/1/1/0, 3/0/2/1, -2/0/-1/1, -2/0/-2/-1, 3/0/3/-1}
						{
							\draw[dashed] (\x, \y) -- (\w,\z);
							
						}
						\foreach \x/\y/\w/\z [count = \n] in {0/0/-0.75/1, 1.75/1/1/0, 0/0/1/0, -2/0/-1.25/1, 3/0/2.25/1, -2.25/0/-2/-1, 3.25/0/3/-1}
						{
							\draw (\x, \y) -- (\w,\z);
							
						}
						\draw (-2,-1)  ..controls (0.5,-2) ..  (3,-1);
						\draw (-1,1) ..controls (0.5,2) .. (2,1);
						
						\node[below] at (0,0) {$a$};\node[below] at (1,0) {$a_2$};
						\node[below] at (-1,0.75) {$a_1$}; \draw [->] (-1,0.65) -- (-1,0.9);
						\node[right] at (-0.5,1) {$a'_1$}; \draw [<-] (-0.65,1) -- (-0.4,1);
						\node[left] at (-1.6,1) {$a''_1$}; \draw [->] (-1.7,1) -- (-1.35,1);
						
						\node[right] at (-1.9,-0.1) {$b_1$}; \node[left] at (-2.25,-0.1) {$b'_1$}; 
						
						\node[right] at (3.25,0) {$c'_2$}; \node[left] at (2.9,0) {$c_2$}; 
						
						\node[below] at (2,0.75) {$b_2$}; \draw [->] (2,0.65) -- (2,0.9);
						\node[left] at (1.5,1) {$b'_2$}; \draw [<-] (1.65,1) -- (1.4,1);
						\node[right] at (2.6,1) {$b''_2$}; \draw [->] (2.7,1) -- (2.35,1);
						
						\node[left] at (-2,-1) {$d_1$}; \node[right] at (3,-1) {$d_2$}; 
						\node[left] at (-3.5,1) {$U$}; 
						\node[left] at (-3.5,0) {$S$}; 
						\node[left] at (-3.5,-1) {$T$}; 
						\draw[dashed] (-3.5,0.75) rectangle (4,1.25);
						\draw[dashed] (-3.5,-0.25) rectangle (4,0.25);
						\draw[dashed] (-3.5,-1.25) rectangle (4,-0.75);
					\end{tikzpicture}
					\caption{Notations used in the proof of \Cref{clm:9.3}.}\label{subfig:case-2}
			\end{subfigure} }
			
			\scalebox{0.85}{
				\begin{subfigure}{0.4\textwidth}
					\begin{tikzpicture}
						\filldraw[fill=gray!40,opacity=0.5] (-1,1) ellipse (0.5cm and 0.25cm);
						\filldraw[fill=gray!40,opacity=0.5] (1.5,0.75) rectangle (2.25,1.25);
						\filldraw[fill=gray!40,opacity=0.5] (-2.125,0) ellipse (0.25cm and 0.125cm);
						\filldraw[fill=gray!40,opacity=0.5] (0.9,-0.25) rectangle (1.4,0.25); 
						
						\foreach \x/\y [count = \n] in {0/0,-1/1, -1.25/1, -0.75/1,  1/0, 1.25/0, 1.75/1, 2/1, -2/0, -2.25/0,  -2/-1, 2/-1 }
						{
							\filldraw (\x, \y) circle (1.5pt);
							
						}
						\foreach \x/\y/\w/\z [count = \n] in {0/0/-1/1,2/1/1/0, -2/0/-1/1, -2/0/-2/-1, 1/0/2/-1}
						{
							\draw[dashed] (\x, \y) -- (\w,\z);
							
						}
						\foreach \x/\y/\w/\z [count = \n] in {0/0/-0.75/1, 1.75/1/1/0, 0/0/1/0, -2/0/-1.25/1, -2.25/0/-2/-1, 1.25/0/2/-1}
						{
							\draw (\x, \y) -- (\w,\z);
							
						}
						\draw (-2,-1)  ..controls (0,-2) ..  (2,-1);
						\draw (-1,1) ..controls (0.5,2) .. (1.8,1);
						\node[left] at (0,0) {$a$};
						\node[below] at (-1,0.75) {$a_1$}; \draw [->] (-1,0.65) -- (-1,0.9);
						\node[right] at (-0.5,1) {$a'_1$}; \draw [<-] (-0.65,1) -- (-0.4,1);
						\node[left] at (-1.6,1) {$a''_1$}; \draw [->] (-1.7,1) -- (-1.35,1);
						\node[right] at (-1.9,-0.1) {$b_1$}; \node[left] at (-2.25,-0.1) {$b'_1$}; 
						
						\node[below] at (1,-0.2) {$a_2$};  \draw [->] (1,-0.3) -- (1,-0.1);	\node[right] at (1.4,0) {$a'_2$};  \draw [->] (1.55,0) -- (1.325,0);
						\node[left] at (1.5,1) {$z$}; \draw [<-] (1.65,1) -- (1.4,1);
						\node[right] at (2.3,1) {$z'$}; \draw [->] (2.4,1) -- (2.1,1);
						
						\node[left] at (-2,-1) {$c$}; \node[right] at (2,-1) {$b_2$}; 
						
						\node[left] at (-3,1) {$U$}; 
						\node[left] at (-3,0) {$S$}; 
						\node[left] at (-3,-1) {$T$}; 
						
						\draw[dashed] (-3,0.75) rectangle (3,1.25);
						\draw[dashed] (-3,-0.25) rectangle (3,0.25);
						\draw[dashed] (-3,-1.25) rectangle (3,-0.75);

					\end{tikzpicture}
					\caption{Notations used in the proof of \Cref{clm:9.4}.}\label{subfig:case-3}
			\end{subfigure} }

			\caption{Illustration of the proof of \Cref{lem:cycle}. The dashed lines indicate edges in $H$. The solid lines indicate edges in $G$. The  curved lines indicate induced paths in $G$. The vertices in the shaded region indicate cliques in $G$. There are no edges in $G$ between a vertex in a rectangular region and a vertex in a elliptical region.}
		\end{figure}
		\begin{lemma}\label{lem:cycle}
			Let $C$ be an induced cycle in $H$ and $S$ be a cluster. Then $|S\cap V(C)|\leq 2$. 
		\end{lemma}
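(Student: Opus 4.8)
The plan is to suppose for contradiction that $|S\cap V(C)|\ge 3$ for some cluster $S\subseteq\Sphere{k}{r}$, and then extract a forbidden configuration in $G$. First I would appeal to \Cref{lem:same-cluster} to conclude that $V(C)\cap\Sphere{k}{r}=S\cap V(C)$, so the three-or-more level-$k$ vertices of $C$ all lie in the single cluster $S$; \Cref{lem:cycle-end} then forces $S$ to sit at an intermediate level, so $C$ meets both $\Ball{k-1}{r}$ and $\Sphere{k+1}{r}$. In particular the cycle must genuinely pass up (towards $r$) and down (away from $r$) through level $k$.

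Next I would pin down the local structure of $C$ around $S$. Every vertex of $S\cap V(C)$ has its two $C$-neighbours at level $k-1$ (inside $\ParentSet{r}{S}$), at level $k$ (a horizontal edge), or at level $k+1$ (inside a child cluster). \Cref{obs-trivial-1}\ref{it:horizontal-a} caps the horizontal edges inside $S$ at one; \Cref{obs-trivial-2}\ref{it:parent-deg} says the upward $H$-edges from $S$ reach at most two parent vertices $w_1,w_2$, and \Cref{obs-trivial-2}\ref{it:parent-stable} says that when both occur $\InducedSub{G}{\ParentSet{r}{S}}$ splits into two cliques $C_1\ni w_1$ and $C_2\ni w_2$ with no $G$-edge between them (\Cref{obs:K23cut}\ref{it:cutk23-c}); finally \Cref{obs-trivial-3} allows at most one vertex of $S$ to be $H$-adjacent to both $w_1$ and $w_2$. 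The downward edges admit the symmetric description through the parent sets of the relevant child clusters, and \Cref{lem:cut-K23} restricts how two distinct children of $S$ can share parent-set components. Together these bounds force the $\ge 3$ vertices into a small number of arrangements, namely the three treated in \Cref{subfig:case-1}, \Cref{subfig:case-2} and \Cref{subfig:case-3}, according to which of the vertices point up, point down, or are joined horizontally.

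In each case I would translate the $H$-picture back into $G$. Using \Cref{obs-trivial-4}\ref{it:same-comp} I replace every $H$-edge between $S$ and $\ParentSet{r}{S}$ by a genuine $G$-edge into the same clique-component; two $C$-vertices lying in a common cluster are joined by an induced $G$-path avoiding the corresponding ball; and the two parent cliques $C_1,C_2$ are joined by an induced path whose interior lies in $\Ball{k-1}{r}$. The upshot of having three vertices in $S$ is that I obtain \emph{three} internally vertex-disjoint routes between a pair of non-adjacent branch sets (typically the two parent cliques $C_1,C_2$): one straight through a level-$k$ vertex, one through the root direction, and one dipping down through the child clusters. Contracting the cliques and paths to connected branch sets then produces a $K_{2,3}$-induced minor, equivalently a theta, contradicting \Cref{prp:induced-truem}.

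The main obstacle is the final verification: proving that the three assembled routes are genuinely disjoint and carry no chords, so that a bona fide $K_{2,3}$-induced minor (rather than a shorter degenerate object) appears. This is exactly where the earlier structure theory is needed — the non-adjacency of the two clique-components of a parent set (\Cref{obs:K23cut}\ref{it:cutk23-c}), the concentration of all level-$k$ cycle vertices in the single cluster $S$ (\Cref{lem:same-cluster}), and the overlap bound of \Cref{lem:cut-K23} jointly forbid $G$-edges between the parent-side, level-$k$, and child-side pieces except at the prescribed branch sets. Carrying this chord-freeness check through each of the three configurations, while keeping each route of length at least two, is the delicate part of the argument.
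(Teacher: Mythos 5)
Your skeleton matches the paper's proof (concentration via \Cref{lem:same-cluster}, the endpoint bound \Cref{lem:cycle-end}, a case analysis around the violating cluster, translation back to $G$ via \Cref{obs-trivial-4}, and a forbidden-configuration contradiction through \Cref{prp:induced-truem}), but the step you dispose of in one sentence --- that the stated bounds ``force the $\geq 3$ vertices into a small number of arrangements'' --- is the actual content of the proof, and you never derive it. The paper obtains the reduction through two specific choices absent from your plan: it takes the violating cluster $S$ at the \emph{maximum} level $k'$ (so $\ell<k'<k$, and all cycle vertices one level above and below $S$ lie in a single parent $U$ and a single child $T$ on the $(S_k,S_\ell)$-path), and then, applying \Cref{obs-trivial-2}\ref{it:parent-deg} to $T$, it selects a vertex $a\in V(C)\cap S$ neither of whose cycle-neighbors lies in $T$. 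The three cases are then positions of $a$'s neighbors $a_1,a_2$ and of the next vertex $b_2$ (\Cref{clm:9.2,clm:9.3,clm:9.4}), and, crucially, no single case closes the argument: the three claims jointly force $\{a,a_2,b_2\}\subseteq S$, i.e.\ two horizontal $H$-edges inside one cluster, and the final contradiction is \Cref{obs-trivial-1}\ref{it:horizontal-a} --- an endgame entirely missing from your proposal. Without the choice of $a$ and this elimination structure, a vertex of $S\cap V(C)$ could a priori have neighbors up, down, or sideways in many combinations, and your appeal to the three figures presupposes the very case analysis you are supposed to produce.

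The second gap is that the chord-freeness you rely on for a clean $K_{2,3}$-induced minor is false in general, and the tools you cite do not forbid the problematic edges. You claim that \Cref{obs:K23cut}\ref{it:cutk23-c}, \Cref{lem:same-cluster} and \Cref{lem:cut-K23} ``jointly forbid $G$-edges between the parent-side, level-$k$, and child-side pieces except at the prescribed branch sets,'' but the danger is \emph{within} the level-$k'$ piece: nothing prevents $G$-edges among vertices of the cluster $S$ itself, e.g.\ between the middle-route vertex $a$ and the lower-route vertices $b_1,b'_1$, all of which lie in $S$. This is precisely why the paper never claims a theta: in each case it concludes only that $\InducedSub{G}{Z}$ contains a theta, long prism, pyramid, \emph{or broken wheel}, any of which contradicts \Cref{prp:induced-truem}; the extra configurations are exactly what absorbs the intra-cluster chords your argument cannot exclude. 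As written, your construction of ``three internally vertex-disjoint, chordless routes'' would fail whenever such chords occur, so your honest admission that the verification is ``the delicate part'' understates the matter --- it is the part where your plan, taken literally, breaks, and where the paper's weaker but sufficient conclusion is needed.
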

		\begin{proof}
			Let $k=\max\{\distance{H}{r}{v}\colon v\in V(C)\}$ and $\ell=\distance{H}{r}{V(C)}$. Due to \Cref{lem:equidistant}, $k=\max\{\distance{G}{r}{v}\colon v\in V(C)\}$ and $\ell=\distance{G}{r}{V(C)}$. Due to \Cref{lem:same-cluster}, there are clusters $S_k$ and $S_\ell$ such that $V(C)\cap \Sphere{k}{r}\subseteq S_k$ and  $V(C)\cap \Sphere{k}{r}\subseteq S_\ell$. Due to \Cref{lem:cycle-end}, we have that $|S_k\cap V(C)|\leq 2$ and $|S_\ell \cap V(C)| \leq 2$. Moreover, all vertices of $C$ lie in the clusters that lie in the unique $(S_k,S_\ell)$-path in $\clusterGraphv{r}{G}$.
			
			Assume the lemma is not true, and let $k'$ be the maximum integer such that there exists a cluster $S\subseteq \Sphere{r}{k'}$ with $|V(C)\cap S|\geq 3$. 
			Due to \Cref{lem:cycle-end}, $\ell <k'<k$. 
			Let $T$ be the child of $S$ that lie in the unique $(S_k,S)$-path in 
			$\clusterGraphv{r}{G}$, and $U$ be the parent of $S$  in $\clusterGraphv{r}{G}$. Observe that $U\subseteq \Sphere{k-1}{r}$ and $T\subseteq \Sphere{k+1}{r}$. Due to \Cref{obs-trivial-2}\ref{it:parent-deg}, there exists at least one vertex $a\in V(C)\cap S$ such that $T$ does not contain any vertex of $C$ which is adjacent to $a$. Let $a_1,a_2$ be the two neighbors of $a$ in $C$. We prove the following claim.
			
			%
			\begin{claim}\label{clm:9.2}
				$\{a_1,a_2\}\cap S\neq \emptyset$. 	
			\end{claim}
			\begin{subproof}
				Assuming otherwise, let $\{a_1,a_2\}\subseteq U$. See \Cref{subfig:case-1} for notations. Since $U$ is the parent of $S$ in $\clusterGraphv{r}{G}$, $\ParentSet{r}{S}\subseteq U$.  Now, $\ParentSet{r}{S}$ contains two vertices $a_1,a_2$ which is adjacent to some vertices (e.g. $a$) in $S$. Due to \Cref{obs-trivial-2}\ref{it:parent-stable}, we have that $\InducedSub{G}{\ParentSet{r}{S}}$ have two connected components $C_1,C_2$ such that $a_1\in C_1, a_2\in C_2$. Observe that, $C_1,C_2$ induce cliques in $G$ and that no vertex of $U\setminus \{a_1,a_2\}$ is adjacent to any vertex of $S$.
				
				Let $x$ be a vertex in $S_k\cap V(C)$.  Observe that there exist a $(x,a_1)$-path $P_1\subseteq C$ that does not contain $a$. Similarly, there exists a $(x,a_2)$-path  $P_2\subseteq C$ that does not contain $a$. Observe that $P_1$ and $P_2$ only meets at $x$. Moreover, for $i\in \{1,2\}$ there must exists a vertex $b_i\in S\cap V(P_i)$ such that $a_ib_i\in E(C)$. For $i\in \{1,2\}$, let $c_i\in V(P_i)\setminus \{a_i\}$ be the neighbor of $b_i$ distinct from $a_i$. For each $i\in \{1,2\}$, observe that $c_i\in T$, otherwise $c_i$ will be adjacent to $a_1$ or $a_2$ (in $H$) contradicting the fact that $C$ is an induced cycle.
				
				Now apply \Cref{obs-trivial-4} on each of the edges in $\{ aa_1,aa_2, b_1a_1,b_2a_2\}\subseteq E(H)$ to infer that there exists vertices $\{a_1',a_2',a''_1,a''_2\}\subseteq U$ such that $\{aa'_1,aa'_2,b_1a''_1,b_2a''_2\}\subseteq E(G)$ and $\{a'_1,a''_1,a_1\}\subseteq C_1$ and $\{a'_2,a''_2,a_2\}\subseteq C_2$. Similarly, apply \Cref{obs-trivial-4} on each of the edges in $\{ b_1c_1,b_2c_2\}\subseteq E(H)$ to infer that there exists vertices $\{b'_1,b'_2\}\subseteq S$ such that $\{b'_1c_1,b'_2c_2, b_1b'_1, b_2b'_2\}\subseteq E(G)$. 
				
				Since $c_1,c_2$ lies in the same cluster $T\subseteq \Sphere{k+1}{r}$, there exists a $(c_1,c_2)$-induced path $Q$ in $G-\Ball{k}{r}$.  On the other hand there exists a $(a_1,a_2)$-upper path (see \Cref{sec:layer} for definition) $R$ in $G$ such that $(V(R)\setminus \{a_1,a_2\})\subseteq \Ball{k-2}{r}$. Now consider the vertices $$Z\coloneq V(R)\cup \{a,a_1,a_2,a'_1,a'_2,a''_1,a''_2\}\cup \{b_1,b_2,b'_1,b'_2\}\cup \{c_1,c_2\}\cup V(Q)$$ It can be verified that  that $\InducedSub{G}{Z}$ contains a theta, long prism, pyramid, or a broken wheel as an induced subgraph, which contradicts \Cref{prp:induced-truem}. 
			\end{subproof}

			Due to	\Cref{clm:9.2}, we assume (without loss of generality) $a_1\in U, a_2\in S$. 
			For $i\in \{1,2\}$, let $b_i$ be the neighbor of $a_i$ in $C$ which is distinct from $a$. Now we prove the following claim. 
			
			\begin{claim}\label{clm:9.3}
				The vertex $b_2$ does not lie in $U$.
			\end{claim}
			\begin{subproof}
				Assume $b_2\in U$. See \Cref{subfig:case-2} for notations. Due to \Cref{obs-trivial-2}, we have that $\InducedSub{G}{\ParentSet{r}{S}}$ have two connected components $A,B$ such that $a_1\in A, b_2\in B$. Moreover, $A,B$ induce cliques in $G$ and no vertex of $U\setminus \{a_1,b_2\}$ is adjacent to any vertex of $S$.
				
				Now consider a vertex $x\in V(C)\cap S_k$.   Observe that there exist a $(x,a_1)$-path $P_1\subseteq C$ that does not contain $a$ or $a_2$. Similarly, there exists a $(x,b_2)$-path  $P_2\subseteq C$ that does not contain $a,a_1$ or $a_2$.  Moreovoer, $V(P_1)\cap V(P_2)=\{x\}$ and the edge $a_1b_1$ lies in  $E(P_1)$.  Let $c_2$ be the neighbor of $b_2$ in $P_2$. Due to \Cref{obs-trivial-2}, we have that $b_1\in S$ and $c_2\in S$. 
				
				Let $d_1\in V(P_1)$ be the neighbor of $b_1$ distinct from $a_1$. Let $d_2\in V(P_2)$ be the neighbor of $c_2$ distinct from $b_2$. For each $i\in \{1,2\}$, observe that $d_i\in T$, otherwise $d_i$ will be adjacent to $a_1$ or $b_2$ contradicting the fact that $C$ is an induced cycle. Now apply \Cref{obs-trivial-4} on each of the edges in $\{ aa_1,a_2b_2, b_1a_1,b_2c_2\}\subseteq E(H)$ to infer that there exists vertices $\{a_1',b_2',a''_1,b''_2\}\subseteq U$ such that $\{aa'_1,a_2a'_2,b_1a''_1,c_2b''_2\}\subseteq E(G)$, $\{a'_1,a''_1,a_1\}\subseteq A$ and $\{b'_2,b''_2,b_2\}\subseteq B$.	Similarly, apply \Cref{obs-trivial-4} on  edges in $\{ b_1d_1,c_2d_2\}\subseteq E(H)$ to infer that there exists vertices $\{b'_1,c'_2\}\subseteq S$ such that $\{b'_1d_1,d_2c'_2,b_1,b'_1,c_2c'_2\}\subseteq E(G)$. 
				
				Since $d_1,d_2$ lies in the same cluster $T\subseteq \Sphere{k+1}{r}$, there exists a $(d_1,d_2)$-induced path $Q$ in $G-\Ball{k}{r}$. On the other hand there exists a $(a_1,b_2)$-upper path $R$ such that $(V(R)\setminus \{a_1,a_2\})\subseteq \Ball{k-2}{r}$.   Now consider the vertices $$Z\coloneq V(R)\cup \{a,a_1,a_2,a'_1,b'_2,a''_1,b''_2\}\cup \{b_1,b_2,b'_1,b'_2\}\cup \{c_1,c_2\}\cup V(Q)$$ It can be verified that $\InducedSub{G}{Z}$ contains a theta, long prism, pyramid, or a broken wheel as an induced subgraph, which contradicts \Cref{prp:induced-truem}. 
			\end{subproof}  
			
			We also prove the following claim.

			%
			\begin{claim}\label{clm:9.4}
				The vertex $b_2$ does not lie in $T$.
			\end{claim}
			\begin{subproof}
				Assume $b_2\in T$. See \Cref{subfig:case-3} for notations. Let $z$ be the vertex in $U$ which is also adjacent to $a_2$ in $H$. Due to \Cref{obs-trivial-1}, we have that $z$ is distinct from $a_1$. Moreover, due to \Cref{obs-trivial-2}, we have that $\InducedSub{G}{\ParentSet{r}{S}}$ have two connected components $A,B$ such that $a_1\in A, z\in B$. Moreover, $A,B$ induce cliques in $G$ and no vertex of $U\setminus \{a_1,z\}$ is adjacent to any vertex of $S$.
				
				Now consider a vertex $x\in V(C)\cap S_k$.   Observe that there exists a $(x,a_1)$-path $P\subseteq C$ that does not contain $a$ or $a_2$. Moreover, $a_1b_1\in E(P)$ and $z\notin V(P)$. Due to \Cref{obs-trivial-2}, we have that $b_1\in S$. Let $c\in V(P)$ be the neighbor of $b_1$ distinct from $a_1$. Since $H$ is triangle-free, $c$ is not adjacent to $a_1$. Due to \Cref{obs-trivial-1}, we have $c\neq z$ and moreover, $c$ is not adjacent to $z$ in $H$. Hence, $c\notin U, c\notin S$. Therefore, $c$ must lie in  $T$. 
				
				Now apply \Cref{obs-trivial-4} on each of the edges in $\{ aa_1,a_2z, b_1a_1\}\subseteq E(H)$ to infer that there exists vertices $\{a_1',z',a''_1\}\subseteq U$ such that $\{aa'_1,a_2z',b_1a''_1\}\subseteq E(G)$, $\{a'_1,a''_1,a_1\}\subseteq A$ and $\{z,z'\}\subseteq B$.	Similarly, apply \Cref{obs-trivial-4} on  edges in $\{ b_1c,a_2b_2\}\subseteq E(H)$ to infer that there exists vertices $\{b'_1,a'_2\}\subseteq S$ such that $\{b'_1c, a'_2b_2,b_1b'_1,a_2a'_2\}\subseteq E(G)$. 
				
				Since $c,b_2$ lies in the same cluster $T\subseteq \Sphere{k+1}{r}$, there exists a $(c,b_2)$-induced path $Q$ in $G-\Ball{k}{r}$. On the other hand there exists a $(a_1,z)$-upper path $R$ such that $(V(R)\setminus \{a_1,z\})\subseteq \Ball{k-2}{r}$.   Now consider the vertices $$Z\coloneq V(R)\cup \{a,a_1,a_2,a'_1,a''_1\}\cup \{z,z'\}\cup \{b_1,b_2,b'_1,a'_2\}\cup V(Q)$$ It  can be verified that $\InducedSub{G}{Z}$ contains a theta, long prism, pyramid, or a broken wheel as an induced subgraph, which contradicts \Cref{prp:induced-truem}. 
			\end{subproof}
			
			Claims~\ref{clm:9.2}, \ref{clm:9.3}, and \ref{clm:9.4} imply that $\{a,a_2,b_2\}\subseteq S$. Since $\{aa_2,a_2b_2\}\in E(H)$, we have a contradiction due to \Cref{obs-trivial-1}.
		\end{proof}
		
		\vspace{-0.5cm}
		\subsection{Tree-width of $H$}\label{sec:tree-width}

		In this section, we shall show that  the output of \Cref{algo:embed} is a triangle-free universally signable graph and conclude using \Cref{prp:tree-width}.   We begin by observing that $H$ is triangle-free.
		
		\begin{lemma}\label{lem:triangle}
			The graph $H$ is triangle-free.
		\end{lemma}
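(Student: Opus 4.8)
The plan is to argue by contradiction: suppose three vertices $x,y,z \in V(H)$ form a triangle, i.e. $\{xy,yz,zx\}\subseteq E(H)$. The first step is to classify the three vertices by their distance from the root $r$ in $G$ (equivalently in $H$, by \Cref{lem:equidistant}). Since every edge of $H$ joins vertices in consecutive spheres $\Sphere{k}{r}$ and $\Sphere{k+1}{r}$ or within the same sphere, and \Cref{clm:lem-G-1} together with \Cref{obs-trivial-4} controls how edges of $H$ relate to the sphere structure, I would first show that not all three distances can be distinct. Indeed, if $\distance{G}{r}{x}<\distance{G}{r}{y}<\distance{G}{r}{z}$, then the edge $xz$ would have to skip a sphere, which \Cref{obs-trivial-4}\ref{it:parent} forbids (edges of $H$ only go between consecutive spheres or within a sphere). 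So the three distances take at most two distinct values.

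Next I would split into the two surviving configurations. In the first, all three vertices lie in the same sphere $\Sphere{k}{r}$; by \Cref{lem:same-cluster} (applied to the triangle viewed as a $3$-cycle, which is an induced cycle in $H$ since $H$ is simple) they lie in a common cluster $S$, and then the set $F=\{uv\in E(H):u,v\in S\}$ contains all three triangle edges, contradicting \Cref{obs-trivial-1}\ref{it:horizontal-a}, which asserts $|F|\leq 1$. In the second configuration, two vertices lie in one sphere $\Sphere{k}{r}$ and the third in the adjacent sphere — without loss of generality $x,y\in\Sphere{k}{r}$ and $z\in\Sphere{k-1}{r}$ (the case $z\in\Sphere{k+1}{r}$ is symmetric after relabelling). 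Here $xy$ is a horizontal edge within the sphere $\Sphere{k}{r}$, and both $xz,yz\in E(H)$ force $z\in\ParentSet{r}{S}$ for the cluster $S$ containing $x,y$, with $z$ adjacent in $H$ to both $x$ and $y$.

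The main obstacle, and the crux of the proof, is this second case: I must rule out a horizontal edge $xy$ whose two endpoints share a common parent-neighbor $z$ in $H$. The tool for this is \Cref{obs-trivial-1}\ref{it:horizontal-b}: since $xy\in E(H)$ is a horizontal edge inside $S$, that observation guarantees (i) $\InducedSub{G}{\ParentSet{r}{S}}$ has two connected components, (ii) $x$ and $y$ have \emph{no} common neighbor in $H$, and (iii) the set $F'$ of vertices of $S$ incident to two parent-edges is empty. Statement (ii) directly contradicts the existence of the common neighbor $z$, since $z$ is adjacent to both $x$ and $y$ in $H$. This closes the second case and hence the lemma. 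I would present the argument in exactly this order — first the distance classification to prune to two cases, then the two-value cases dispatched by \Cref{obs-trivial-1}\ref{it:horizontal-a} and \ref{it:horizontal-b} respectively — since the real content is entirely packaged in the earlier structural observations, and the lemma follows by assembling them correctly rather than by any new computation.
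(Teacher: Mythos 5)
Your proposal is correct and follows essentially the same route as the paper: the paper's (two-sentence) proof likewise observes that any triangle in $H$ must contain an edge $uv$ lying inside a single cluster $S$, whereupon the third triangle vertex is a common neighbor of $u$ and $v$ in $H$, contradicting \Cref{obs-trivial-1}\ref{it:horizontal-b}. Your version merely makes explicit the sphere-level case analysis (and dispatches the all-in-one-cluster case via \Cref{obs-trivial-1}\ref{it:horizontal-a} and \Cref{lem:same-cluster}) that the paper leaves implicit, which is a harmless and if anything slightly more careful rendering of the same argument.
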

		\begin{proof}
			Assume for contradiction that, $H$ contains a triangle $T$. Then there must exist a cluster $S$ w.r.t $r$ and an edge $uv\in E(T)$ such that $\{u,v\}\subseteq S$. This contradicts  \Cref{obs-trivial-1}\ref{it:horizontal-b}. 
		\end{proof}

		\begin{lemma}\label{lem:theta}
			The graph $H$ is theta-free.
		\end{lemma}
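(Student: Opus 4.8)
The plan is to assume, for contradiction, that $H$ contains an induced theta $\Theta$ consisting of three internally vertex-disjoint induced paths $P_1,P_2,P_3$ between branch vertices $a$ and $b$, and to extract from it a forbidden Truemper configuration in $G$, contradicting \Cref{prp:induced-truem}. The first step is to record a clean per-level bound. Since any two of the three paths induce a cycle, the sets $C_{12}\coloneq P_1\cup P_2$, $C_{13}\coloneq P_1\cup P_3$, and $C_{23}\coloneq P_2\cup P_3$ are induced cycles of $H$. For any induced cycle $C$ of $H$ and any $j\geq 0$, all vertices of $\Sphere{j}{r}\cap V(C)$ lie in a single cluster by \Cref{lem:same-cluster}, and that cluster meets $C$ in at most two vertices by \Cref{lem:cycle}; hence $|\Sphere{j}{r}\cap V(C)|\leq 2$. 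Using \Cref{lem:equidistant} I treat $\rho(v)\coloneq \distance{H}{r}{v}=\distance{G}{r}{v}$ as a common level function on $V(\Theta)$, so each of $C_{12},C_{13},C_{23}$ carries at most two vertices per level.

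Next I would localise the contradiction at the globally extreme levels of $\Theta$. A vertex at a global interior minimum is a strict local minimum, so both of its path-neighbours sit one level higher (the only exception being the single horizontal edge permitted by \Cref{obs-trivial-1}). If three interior minima $u\in P_1$, $v\in P_2$, $w\in P_3$ occurred at one level $\ell$ with $\ell+1\notin\{\rho(a),\rho(b)\}$, then the four distinct path-neighbours of $u$ and $v$ would all lie at level $\ell+1$ on $C_{12}$, violating the per-level bound of two; a symmetric count handles minima clustered two-on-one-path, and the same reasoning applies at the top level. This pins the extreme levels of $\Theta$ at the branch vertices. I would then exploit the degree-$3$ vertices directly: if two neighbours of $b$ shared the level $\rho(b)$, then $b$ together with those two neighbours would place three vertices of some $C_{ij}$ in one cluster, contradicting \Cref{lem:cycle}; so at most one neighbour of $b$ lies at level $\rho(b)$, and \Cref{obs-trivial-2} and \Cref{obs-trivial-3} then dictate exactly how its other neighbours attach (which component of the parent set they enter, and that at most one vertex carries two upward edges).

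The final step is to lift the reduced configuration from $H$ to $G$, exactly in the spirit of the proof of \Cref{lem:cycle}. By \Cref{obs-trivial-4}, each edge of $\Theta$ that climbs a level is witnessed by a genuine $G$-edge into the same connected component of the relevant parent set, and by \Cref{obs:K23cut} every parent set is a clique or a pair of cliques; same-level pairs on $\Theta$ are joined by induced paths avoiding the appropriate ball (upper paths and in-cluster paths). Splicing these $G$-fragments along the three branches should yield three internally disjoint induced paths in $G$ with no cross edges, i.e. a theta in $G$ (or, once short branches are accounted for, a long prism, pyramid, or broken wheel), contradicting \Cref{prp:induced-truem}. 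I expect this lifting to be the main obstacle: one must verify that the reconstructed $G$-branches stay internally disjoint and induced and that no unwanted $G$-edges appear between them, which forces a careful case split on the levels of the branch-vertex neighbours together with the clique/two-clique dichotomy of \Cref{obs:K23cut} and the horizontal-edge bookkeeping of \Cref{obs-trivial-1}, closely paralleling the claims used to prove \Cref{lem:cycle}.
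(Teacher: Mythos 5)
Your opening moves match the paper's: the per-level bound (at most two vertices of any induced cycle of $H$ per level, via \Cref{lem:same-cluster} together with \Cref{lem:cycle} and \Cref{lem:equidistant}) and the observation that two same-level neighbours of a branch vertex would put three vertices of some $C_{ij}$ in one cluster are both sound and are exactly the kind of steps the paper uses. The genuine gap is your final step: you never close the contradiction, deferring it to a lift of the reduced configuration from $H$ back to $G$, which you yourself flag as ``the main obstacle'' and do not carry out. That lift is substantially harder than you suggest, because edges of $H$ need not be edges of $G$ (by \Cref{clm:lem-G-1} an $H$-edge only guarantees $G$-distance at most $4$), so ``splicing $G$-fragments'' along the three branches while keeping them internally disjoint, induced, and free of cross edges would require redoing, in a global setting spanning many levels, the delicate case analysis that the paper confines to the proof of \Cref{lem:cycle} (where the witnessing $G$-edges from \Cref{obs-trivial-4}, the upper path, and the in-cluster path all live within three consecutive levels). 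As written, your argument is a plan with its hardest component missing.

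Moreover, the lift is unnecessary: the paper closes the proof entirely inside $H$, using \Cref{lem:cycle} as the only interface to $G$. Concretely, with branch vertices $u,v$ and $S$ the cluster of the deeper one: if $u,v$ lie in different clusters, each of the three paths must cross $\ParentSet{r}{S}$, and either the three crossing endpoints in the parent set are distinct (contradicting \Cref{obs-trivial-2}) or two paths cross at $u$, forcing three vertices of the cycle $P_1\cup P_2$ into $S$ (contradicting \Cref{lem:cycle}); if $u,v$ share a cluster, then two up-edges at $v$ force, via \Cref{lem:cycle} applied to $S$ and its parent cluster, both $u$ and $v$ to have two $H$-neighbours in $\ParentSet{r}{S}$ (contradicting \Cref{obs-trivial-3}), two down-edges into a child cluster are excluded symmetrically, and the remaining case places a third vertex of a cycle in $S$, again contradicting \Cref{lem:cycle}. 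Your level-function analysis of interior minima is also looser than needed (you would still have to verify the four path-neighbours you count are pairwise distinct, which fails for short cycles), whereas the paper's case split avoids such counting altogether. To repair your proposal, replace the lifting step with this in-$H$ case analysis; everything you would need is already in \Cref{obs-trivial-1,obs-trivial-2,obs-trivial-3} and \Cref{lem:cycle}.
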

		\begin{proof}
			Assume $H$ contains a theta $T$ as an induced subgraph. Let $u,v\in V(T)$ be the vertices of degree three and let $P_1,P_2,P_3$ be the three $(u,v)$-paths of $T$. Without loss of generality, assume $\distance{G}{r}{u}\leq \distance{G}{r}{v}$ and $S$ be the cluster that contains $v$. 
			
			Suppose $u,v$ lies in different clusters. For each $i\in \{1,2,3\}$, there exist an edge $x_iy_i\in E(P_i)$ such that $x_i\in S$ and $y_i\in \ParentSet{r}{S}$. If $y_1,y_2,y_3$ are distinct vertices, we have a contradiction due to \Cref{obs-trivial-2}\ref{it:parent-deg}. Therefore, we can assume without loss of generality that $y_1=y_2=u$. This implies $x_1$ and $x_2$  are distinct vertices, and that $x_1\neq v, x_2\neq v$. However, now  $S$ contains three vertices from an induced cycle in $H$ (formed by $V(P_1)\cup V(P_2)$), which contradicts \Cref{lem:cycle}. 
			
			Hence $u,v$ lies in the same cluster and let $U$ be the cluster that contains $\ParentSet{r}{S}$. 
			Suppose there exists  two vertices $\{v_1,v_2\}\subseteq U$ such that $\{vv_1, vv_2\}\subseteq E(T)$. Without loss of generality, assume that $v_1\in V(P_1)$ and $v_2\in V(P_2)$. Let $C$ be the induced cycle in $H$ induced by $V(P_1)\cup V(P_2)$. Then, for each $i\in \{1,2\}$ there exist edges $x_iy_i\in E(P_i)$ such that $\{x_1,x_2\}\subseteq U$ and $\{y_1,y_2\}\subseteq S$. Applying \Cref{lem:cycle} on $C$, $S$, and $U$, we have that $\{x_1,x_2\}=\{v_1,v_2\}$ and $\{y_1,y_2\}=\{u,v\}$.  However, this contradicts \Cref{obs-trivial-3}. 
			Using similar arguments, we can show that there does not exist a cluster $S'$ in $G$ such that $\ParentSet{r}{S'} \subseteq S$ and $S'$ contains two vertices that are adjacent to $v$ in $T$. Hence, it must be the case that there exists a vertex $v_1\in S\setminus \{u,v\}$ such that $v,v_1\in V(T)$. Hence, $S$ contains three vertices $u,v,v_1$ from an induced cycle in $H$, which contradicts \Cref{lem:cycle}. 
		\end{proof}
		\begin{lemma}\label{lem:wheel}
			The graph $H$ is wheel-free.
		\end{lemma}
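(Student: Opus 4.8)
The plan is to assume, for contradiction, that $H$ contains a wheel with rim $C$ (an induced cycle) and centre $x$ having at least three neighbours on $C$, and then to show that $x$ can in fact have at most two neighbours on $C$. Since $H$ is triangle-free by \Cref{lem:triangle}, the centre $x$ does not lie on $C$ and no two neighbours of $x$ on $C$ are adjacent; in particular every sector of the wheel has length at least two. Write $d\coloneq \distance{G}{r}{x}$, which equals $\distance{H}{r}{x}$ by \Cref{lem:equidistant}, and let $S$ be the cluster containing $x$. Every edge produced by \Cref{algo:embed} joins two vertices whose distances from $r$ differ by at most one, so each neighbour of $x$ on $C$ lies in $\Sphere{d-1}{r}$, $\Sphere{d}{r}$, or $\Sphere{d+1}{r}$. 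The argument then splits according to these three levels.

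First I would bound the neighbours of $x$ at levels at most $d$, which is the routine part. By \Cref{obs-trivial-4}\ref{it:parent}, every neighbour of $x$ in $\Sphere{d-1}{r}$ lies in $\ParentSet{r}{S}$, and by \Cref{obs-trivial-2}\ref{it:parent-deg} there are at most two of them; by \Cref{obs-trivial-1}\ref{it:horizontal-a} there is at most one neighbour of $x$ inside $S$, namely the unique possible horizontal edge of $S$. The delicate point is to exclude the simultaneous occurrence of one horizontal neighbour and two parent-neighbours, which would already produce three spokes. For this I would invoke \Cref{obs-trivial-1}\ref{it:horizontal-b}: the presence of a horizontal edge at $x$ forces the set of vertices of $S$ having two neighbours in $\ParentSet{r}{S}$ to be empty, so in that case $x$ has at most one neighbour in $\Sphere{d-1}{r}$. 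Either way $x$ has at most two neighbours in $\Ball{d}{r}$, and hence, having at least three neighbours on $C$, at least one neighbour of $x$ must lie in $\Sphere{d+1}{r}$, inside a child cluster $T$ of $S$ with $x\in \ParentSet{r}{T}$.

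The hard part is level $d+1$, where the easy counting breaks down: $x$ may legitimately be the designated parent vertex (one of $w,w_1,w_2$ in \Cref{algo:embed}) of several distinct children of $S$ and so have unboundedly many neighbours one level deeper. To treat a level-$(d+1)$ neighbour $p\in T$ I would argue as in the proof of \Cref{lem:cycle}: examine the two rim-neighbours of $p$ together with the induced cycle of $C$ passing through them, use \Cref{lem:cycle} to keep at most two rim vertices in each cluster, and use \Cref{obs-trivial-4}\ref{it:same-comp} together with \Cref{obs:K23cut} to recover, for each relevant edge of $H$, an honest edge of $G$ and the clique/two-component structure of the parent sets involved. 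Combining an upper path with respect to $r$ between two level-$(d-1)$ vertices, induced paths inside $G-\Ball{d}{r}$ joining vertices of the common child cluster $T$ (exactly as in \Cref{lem:cut-K23}), and these recovered edges, one assembles a vertex set of $G$ inducing a theta, long prism, pyramid, or broken wheel, contradicting \Cref{prp:induced-truem}; in the complementary subcases, where a sector between two spokes dips to level at most $d$, one instead forces three vertices of $C$ into a single cluster, contradicting \Cref{lem:cycle}.

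I expect the main obstacle to be precisely this level-$(d+1)$ analysis. Because no local degree bound constrains the spokes pointing away from $r$, the contradiction must come from the global induced structure of the rim combined with the $K_{2,3}$-induced-minor-freeness of $G$, via a forbidden Truemper configuration. The heaviest bookkeeping is verifying that the assembled vertex set induces \emph{exactly} such a configuration with no accidental chords, where triangle-freeness of $H$ and \Cref{obs:K23cut}\ref{it:cutk23-c} are used repeatedly; this mirrors, and is of comparable length to, the three claims inside the proof of \Cref{lem:cycle}.
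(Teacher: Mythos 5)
Your counting for the spokes that stay inside $\Ball{d}{r}$ is correct (and, via \Cref{obs-trivial-1}\ref{it:horizontal-b}, even slightly more explicit than the paper, which leaves that step implicit), but the gap sits exactly where you predicted it: the level-$(d+1)$ case. There your proposal stops at a plan -- assemble an upper path, paths in $G-\Ball{d}{r}$, and recovered $G$-edges into a theta, long prism, pyramid, or broken wheel -- without ever exhibiting the vertex set or checking chordlessness, and it is doubtful the plan transfers: all three claims inside the proof of \Cref{lem:cycle} start from \emph{three rim vertices in a single cluster}, which is precisely the configuration \Cref{lem:cycle} has already excluded and which you cannot assume about a spoke into a child cluster. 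More importantly, no such construction is needed, because the paper disposes of this case entirely inside $H$ with a short count that your proposal misses. Let $z$ be a rim-neighbour of the centre $u$ lying in a child cluster $T$ (so $u\in \ParentSet{r}{T}$). By \Cref{lem:cycle}, not all three rim-neighbours of $u$ can lie in $T$, so pick a rim-neighbour $z'\notin T$. If $z'$ lies at level $d-1$, the two internally disjoint rim arcs between $z$ and $z'$ must each leave the subtree of $T$, and since every edge of $H$ joins two vertices of one cluster or a cluster to its parent set, each arc contains an edge $x_iy_i$ with $y_i\in T$ and $x_i\in \ParentSet{r}{T}$, with $x_1\neq x_2$ by internal disjointness. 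Triangle-freeness gives $u\notin V(C)$, hence $u\notin\{x_1,x_2\}$, while $uz\in E(H)$ makes $u$ a third member of the set $\{w\in \ParentSet{r}{T}\colon \exists v\in T,\ wv\in E(H)\}$, contradicting \Cref{obs-trivial-2}\ref{it:parent-deg}. The surviving possibility pushes the remaining rim-neighbours of $u$ into $S$ itself, i.e.\ two horizontal edges at $u$ in one cluster, contradicting \Cref{obs-trivial-1}\ref{it:horizontal-a}.

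So the decisive idea you are missing is that the centre $u$ itself counts as an additional parent-set vertex of $T$ adjacent to $T$; the ``unboundedly many down-neighbours'' you worry about are harmless because the rim, not the star of spokes, is what gets counted against \Cref{obs-trivial-2}\ref{it:parent-deg}. The paper's proof of this lemma consequently uses no upper paths, no induced paths below $\Ball{d}{r}$, and no appeal to \Cref{prp:induced-truem} at all -- the wheel case reduces to \Cref{lem:cycle} plus the two degree observations. As submitted, your argument is incomplete at its central case, and completing it along your proposed route would mean redoing (and adapting, with no guarantee of success) the heaviest part of \Cref{lem:cycle}, all of which the paper's one-count argument avoids.
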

		\begin{proof}
			Assume that $H$ contains a wheel whose rim is $C$ and the center is $u$. Let $S$ be the cluster that contains $u$ and let $u_1,u_2,u_3$ be three neighbors of $u$ in $C$.  	Suppose there exists a vertex $z\in \{u_1,u_2,u_3\}$ such that $\distance{G}{r}{z}=\distance{G}{r}{u}+1$ and $T$ be the cluster that contains $z$. Note that $\ParentSet{r}{T}\subseteq S$. Moreover, there must be a $z'\in \{u_1,u_2,u_3\}\setminus \{z\}$ such that $z'\notin T$. (Otherwise, $T$ will contain three vertices of $C$, which contradicts \Cref{lem:cycle}.) Suppose $\distance{G}{r}{z'} = \distance{G}{r}{u} - 1$. Then there exists two distinct $(z,z')$-paths $Q_1\subseteq C,Q_2\subseteq C$, such that $V(Q_1)\cap V(Q_2)=\{z,z'\}$. Observe that for each $i\in \{1,2\}$, there must be an edge $x_iy_i\in E(Q_i)$ such that $x_i\in S$ and $y_i\in T$. Now, let $F\coloneq \{w\in \ParentSet{r}{T} \colon \exists v \in T, wv\in E(H)\}$. Observe that $\{x_1,x_2,u\}\subseteq F$, and hence $|F|\geq 3$, which contradicts \Cref{obs-trivial-2}\ref{it:parent-deg}. This implies $\{u_1,u_2,u_3\}\setminus \{z\} \subseteq S$, but this contradicts \Cref{obs-trivial-1}\ref{it:horizontal-a}. 
		\end{proof}
		
		\Cref{prp:tree-width,lem:triangle,lem:theta,lem:wheel} implies the following.
		
		\begin{lemma}\label{lem:tree-width}
			The tree-width of $H$ is at most two.
		\end{lemma}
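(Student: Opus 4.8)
The plan is to invoke Proposition~\ref{prp:tree-width}, which asserts that any triangle-free universally signable graph has tree-width at most two, and to supply the missing verification that $H$ is indeed a triangle-free universally signable graph. By the characterisation recalled just before Proposition~\ref{prp:decom}, a graph is universally signable if and only if it contains no theta, no pyramid, no prism, and no wheel as an induced subgraph. So it suffices to rule out each of these four configurations in $H$, together with triangles.

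The key observation that collapses the work is that $H$ is \emph{triangle-free} by Lemma~\ref{lem:triangle}. A triangle-free graph automatically contains no pyramid and no prism: every pyramid and every prism contains a triangle as a subgraph (the pyramid has the triangle $b_1b_2b_3$, and the prism has $a_1a_2a_3$), so neither can occur as a (necessarily induced, hence subgraph-level) configuration inside a triangle-free graph. This disposes of two of the four forbidden structures immediately, leaving only thetas and wheels.

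First I would record the triangle-freeness via Lemma~\ref{lem:triangle}, then observe that triangle-freeness excludes pyramids and prisms. Next I would cite Lemma~\ref{lem:theta} to exclude thetas and Lemma~\ref{lem:wheel} to exclude wheels as induced subgraphs of $H$. Having eliminated all four Truemper-type configurations, $H$ is universally signable by the characterisation of Conforti et al.~\cite{conforti1997universally}; being additionally triangle-free, Proposition~\ref{prp:tree-width} applies and yields $\mathrm{tw}(H)\le 2$. There is essentially no remaining obstacle in this particular lemma: all the substance was already carried out in Lemmas~\ref{lem:triangle}, \ref{lem:theta}, and~\ref{lem:wheel} (whose proofs rest on Lemma~\ref{lem:cycle}, the bound that every cluster meets an induced cycle of $H$ in at most two vertices). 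The only point requiring a moment's care is the implicit step that triangle-freeness subsumes the prism- and pyramid-free conditions, so that one does not need separate lemmas for those two configurations; I would make that observation explicit so the chain of implications $\{$triangle-free, theta-free, wheel-free$\}\Rightarrow$ universally signable is complete.
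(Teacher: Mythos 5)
Your proposal is correct and matches the paper's proof, which likewise derives the lemma by combining Proposition~\ref{prp:tree-width} with Lemmas~\ref{lem:triangle}, \ref{lem:theta}, and~\ref{lem:wheel} (the paper leaves implicit the step you spell out, namely that triangle-freeness already rules out prisms and pyramids since each contains a triangle). No discrepancy beyond your making that routine observation explicit.
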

		
		\vspace{-0.5cm}
		
		\subsection{Completion of the proof}\label{sec:complete}
		Recall that $G$ is a $K_{2,3}$-induced minor free graph, $r$ is an arbitrary vertex of $G$ (fixed in the beginning of \ref{sec:proof}), $H$ is the graph returned by \Cref{algo:embed}. Let $c \coloneq 2470$ and $u,v$ be any two vertices of $G$. Due to \Cref{prp:sipco}, the strong isometric path complexity of $G$ is at most $71$. Due to \Cref{lem:sipco-distort,lem:root-path-distort,lem:adj-distort},  $\distance{H}{u}{v} - \distance{G}{u}{v}\leq c$.  
		Due to \Cref{lem:triangle,lem:theta,lem:wheel} we know that $H$ is a universally signable graph, and therefore $K_{2,3}$-induced minor-free. Hence, the strong isometric path complexity of $H$ is at most $71$. Due to \Cref{lem:sipco-distort,clm:lem-G-1,clm:lem-G-2}, we have that $\distance{H}{u}{v} - \distance{G}{u}{v} \leq c$. Hence, $\dmod{\distance{H}{u}{v} - \distance{G}{u}{v}}\leq c$. Finally, \Cref{lem:tree-width} asserts that $H$ has tree-width at most two. This completes the proof.
		\subsection{Remark on \Cref{cor:univ}}
		
		Any minimal cutset of universally signable graphs is either a clique or a stable set of size two (\Cref{prp:decom}).  Therefore,  when inputs of \Cref{algo:embed} are restricted to universally signable graphs the \emph{if} condition of \Cref{if-main} can be modified as follows:
		
		\begin{center}
			\emph{If $\InducedSub{G}{\ParentSet{r}{S}}$ contains at least three vertices or is an edge}
		\end{center} 
		
		This check can be done in $O(1)$-time, and the running time of \Cref{algo:embed} will be $O(n+m)$ where $n$ and $m$ are the number of vertices and edges of the input universally signable graph. Since the diameter of graphs of tree-width two can be found optimally in truly sub-quadratic time~\cite{cabello2018subquadratic}, we have the proof of \Cref{cor:univ}.
		
		\vspace{-0.25cm}
		\section{Conclusion}\label{sec:conclude}
		
		In this paper, combining the layering partition technique with the notion of strong isometric path complexity, we proved that $K_{2,3}$-induced minor-free graphs admit quasi-isometry with constant additive distortion to graphs with tree-width at most two. Minimizing the upper bound on the additive distortion is interesting. By tailoring the proof technique of Chakraborty et al. (Theorem 3, \cite{c2023isometric}) slightly and using \Cref{prp:ind2}, the bound on the strong isometric path complexity of $K_{2,3}$-induced minor free can be improved. This would lead to an improvement on the bound on the additive distortion (to below $1000$). However, we believe the actual value of the optimum additive distortion consists of single digits. Providing a linear time algorithm to map $K_{2,3}$-induced minor-free graphs with additive distortion to graphs with tree-width at most two would be interesting.

		Since $K_{2,r}$-induced minor-free graphs and more generally \emph{$K_{2,r}$-asymptotic minor-free} graphs~\cite{georgakopoulos2023graph} have bounded strong isometric path complexity, it would be interesting to study if these graph classes admit quasi-isometry with additive distortion to graphs with tree-width at most $r$. It would also be interesting to study if hereditary graph classes like  \emph{even-hole free} graphs~\cite{vuvskovic2010even}, \emph{pyramid-only} graphs, etc. admit quasi-isometry with additive distortion to graphs with tree-width at most two. A result of Albrechtsen et al. \cite{albrechtsen2024characterisation} implies that even-hole free and pyramid-only graphs admit quasi-isometry to graphs with tree-width at most two. However, both even-hole free graphs and pyramid-only graphs have unbounded strong isometric path complexity. Therefore, our technique cannot be used directly.
		
		\vspace{-0.25cm}
		
		\bibliographystyle{plain}
		\bibliography{references}
	\end{document}